\numberwithin{equation}{section}
\newtheorem{theorem}{Theorem}[section]
\newtheorem{lemma}{Lemma}[section]
\newtheorem{remark}{Remark}[section]
\newtheorem{proposition}{Proposition}[section]
\journal{arXiv}
\begin{document}

\begin{frontmatter}

%% Title, authors and addresses

%% use the tnoteref command within \title for footnotes;
%% use the tnotetext command for the associated footnote;
%% use the fnref command within \author or \address for footnotes;
%% use the fntext command for the associated footnote;
%% use the corref command within \author for corresponding author footnotes;
%% use the cortext command for the associated footnote;
%% use the ead command for the email address,
%% and the form \ead[url] for the home page:
%%
%% \title{Title\tnoteref{label1}}
%% \tnotetext[label1]{}
%% \author{Name\corref{cor1}\fnref{label2}}
%% \ead{email address}
%% \ead[url]{home page}
%% \fntext[label2]{}
%% \cortext[cor1]{}
%% \address{Address\fnref{label3}}
%% \fntext[label3]{}

\title{Weak order in  averaging principle for two-time-scale stochastic partial differential equations }
%\tnoteref{label1}} %%%%%     \tnotetext[label1]{This work was supported by
%%%%%     China NSF Grant No. 11301403, 11405118, 11271295, 11271013, 10901065
%%%%%     and Science and Technology Research Projects of Hubei Provincial
%%%%%     Department of Education No: D20131602, Foundation of Wuhan Textile
%%%%%     University 2013.}

%\tnotetext[label1]{}
%% use optional labels to link authors explicitly to addresses:
%% \author[label1,label2]{<author name>}
%% \address[label1]{<address>}
%% \address[label2]{<address>}

\author{Hongbo Fu}
\ead{hbfuhust@gmail.com}
\author{Li Wan}
\ead{wanlinju@aliyun.com}
\address{College of Mathematics and Computer Science, Wuhan Textile University, Wuhan, 430073, PR China}

\author{Jicheng Liu\corref{cor1}}   %%%%%%%% 2016\corref{cor1}}
\ead{jcliu@hust.edu.cn}
\author{Xianming Liu}
\ead{xmliu@hust.edu.cn}
\address{School of Mathematics and Statistics, Huazhong University of Science and
Technology, Wuhan, 430074, PR China} %%%%%%      \\and\\
%%%%%      Department of Statistics, Purdue University, West Lafayette, IN
%%%%%%      47907, USA}

%%  \address{ College of Mathematics and Computer Science, Wuhan Textile University, Wuhan, 430073, China}
%\ead[url]{home page}
\cortext[cor1]{Corresponding author at: School of Mathematics and
  Statistics, Huazhong University of Science and Technology, Wuhan,
      430074, China.  }
\begin{abstract}
%% Text of abstract

This work is devoted to averaging principle of a two-time-scale
stochastic partial differential equation  on a bounded interval $[0,
l]$, where both the fast and slow components are directly perturbed
by additive noises. Under some regular conditions on drift
coefficients, it is proved that the rate of weak convergence for the
slow variable to the averaged dynamics is of order $1-\varepsilon$
for arbitrarily small $\varepsilon>0$. The proof is based on  an
asymptotic expansion of solutions to Kolmogorov equations associated
with the multiple-time-scale system.

\end{abstract}

\begin{keyword}
%% keywords here, in the form: keyword \sep keyword
Stochastic partial differential equation; Averaging principle;
Invariant measure;  Weak convergence; Asymptotic expansion.

MSC: primary 60H15, secondary 70K70
%% MSC codes here, in the form: \MSC code \sep code
%% or \MSC[2008] code \sep code (2000 is the default)
\end{keyword}

\end{frontmatter}

%%
%% Start line numbering here if you want
%%
% \linenumbers

%% main text

\section{Introduction}
In a previous paper \cite{Brehier}, Br\'{e}hier exhibited the strong
and weak order of an averaging principle for  the following  class
of slow-fast stochastic reaction-diffusion equation  on a bounded
 interval $D=[0, l]$ of $\mathbb{R}$:
\begin{eqnarray}
\begin{cases}
 \frac{\partial }{\partial
t}x^\epsilon_t(\xi)= \Delta x^\epsilon_t(\xi)
+F(x^\epsilon_t(\xi),y^\epsilon_t(\xi)), \;\xi\in D,\;t>0,\\
 \frac{\partial }{\partial
t}y^\epsilon_t(\xi)=\frac{1}{\epsilon} \Delta y^\epsilon_t(\xi)
+\frac{1}{\epsilon}G(x^\epsilon_t(\xi),y^\epsilon_t(\xi))+\frac{1}{\sqrt{\epsilon}}
\dot{W}_t(\xi),\;\xi\in D,\;t>0,\\
x_0^\epsilon(\xi)=x(\xi),\;y_0^\epsilon(\xi)=y(\xi), \;\xi\in D, \\
x^\epsilon_t(0)=x^\epsilon_t(l)=0,\; t\geq0,\\
y^\epsilon_t(0)=y^\epsilon_t(l)=0,\; t\geq0,
\end{cases}\label{eqation-orignal-0}
\end{eqnarray}
where  the leading linear operator
$\Delta=\frac{\partial^2}{\partial\xi^2}$ is the Laplacian operator.
The ratio of time-scale separation is described by the positive and
small parameter $\epsilon$. With this time scale, the process
$x_t^\epsilon(\xi)$ is always called as the slow component and
$y^\epsilon_t(\xi)$ as the fast component.  The drift coefficients
$F$ and $G$ are suitable mappings from $ L^2(D)$ to itself. The
stochastic perturbation $W_t$ is an $L^2(D)$-valued Wiener process
with respect to a filtered probability space $(\Omega, \mathscr{F},
\mathscr{F}_t, \mathbb{P})$.

In many applications, it is of interest to describe dynamics of the
slow variable. Since exact solution  is hard to be known,   a simple
equation without fast component, which can capture the essential
dynamics of slow variable, is highly desirable. The fundamental
method to approximate slow solution $x^\epsilon_t(\xi)$ to equation
(1.1) is the so-called averaging procedure. Under some conditions,
it has been proven that the slow solution $x^\epsilon_t(\xi)$ to
problem \eqref{eqation-orignal-0} converges (as $\epsilon$ tends to
$0$), in a suitable sense, to   solution   of the  so-called
averaged equation, obtained by eliminating fast component via taking
the average of the coefficient $F$ over the slow equation.

 Once  one has proved the validity of averaging principle, a critical question
arises as to how do we   determine the rate of convergence for this
procedure.  In Br\'{e}hier \cite {Brehier}, it has been proved that
the strong convergence (approximation in pathwise) order is
$\frac{1}{2}-\varepsilon$ while the weak convergence (approximation
in law) order is $1-\varepsilon$, for arbitrarily small
$\varepsilon>0$, on condition that the slow motion equation is a
deterministic parabolic equation. Due to the arbitrariness of
$\varepsilon$ we may say that  strong (resp. weak) convergence order
is $\frac{1}{2}^-$ (resp. $1^-$). If an additive noise is included
in the slow motion equation, the strong convergence order will
decrease to $\frac{1}{5}$. In this case, unfortunately, the methods
used to prove the weak order will be invalid. The main difficulty is
due to the fact that tactics depend on the time derivative of
solution to the averaged equation, which does not exist in any
general way with the case where the slow motion equation is
perturbed with a noise. In a more recent work following the
procedure inspired by Br\'{e}hier \cite {Brehier}, Dong et al.
\cite{Dong}  establish weak order $1^-$ in stochastic averaging for
one dimensional Burgers equation only in the particular case of an
additive noise on the fast component.

%%%%%%%%%%%%%%%%%%%%%%%%%%%%%%%%%%%%%%%%%%%%%%%%%%%%%%%%%%%%%%%%%%%%%%%%%%%%
Unlike in   the above-mentioned papers, where  the noise acts only
in the fast motion, in the present paper we study a class of
stochastic partial differential equations on the bounded interval
$D=[0, l]$ of $\mathbb{R}$, involving two separated time scales,
which can be written as:
\begin{eqnarray}
\begin{cases}
 \frac{\partial }{\partial
t}x^\epsilon_t(\xi)= \Delta x^\epsilon_t(\xi)
+F(x^\epsilon_t(\xi),y^\epsilon_t(\xi))+\sigma_1
\dot{W}^1_t(\xi), \;\xi\in D,\;t>0,\\
 \frac{\partial }{\partial
t}y^\epsilon_t(\xi)=\frac{1}{\epsilon} \Delta y^\epsilon_t(\xi)
+\frac{1}{\epsilon}G(x^\epsilon_t(\xi),y^\epsilon_t(\xi))+\frac{\sigma_2}{\sqrt{\epsilon}}
\dot{W}^2_t(\xi),\;\xi\in D,\;t>0,\\
x_0^\epsilon(\xi)=x(\xi),\;y_0^\epsilon(\xi)=y(\xi), \;\xi\in D, \\
x^\epsilon_t(0)=x^\epsilon_t(l)=0,\; t\geq0,\\
y^\epsilon_t(0)=y^\epsilon_t(l)=0,\; t\geq0.
\end{cases}\label{eqation-orignal}
\end{eqnarray}
 Assumptions on
regularity of the drift coefficients $F$ and $G$ will be given
below. The noises $W^1_t(\xi)$ and $W^2_t(\xi)$ are independent
Wiener processes which will be detailed in next section. The
coefficients of noise strength $\sigma_1$ and $\sigma_2$ are
positive constants. The coupled stochastic partial differential
equation in form of \eqref{eqation-orignal} arises from many
physical systems when random spatio-temporal forcing is taken into
account, such as diffusive phenomena in media, epidemic propagation
and transport process of chemical species.

%%%%%% {with only the slow motion equation being forced by additive noise.}

So far, the  explicit order for weak convergence in averaging has
not be extended to the  general situation when  both the fast and
slow components are directly perturbed with some noises. In the
current article, we will show that   the {weak order} $1^-$ can be
achieved even when there is a noise in the slow motion equation.
More precisely, we prove that for any $T>0$ and  a class of test
functions $\phi: L^2(D)\rightarrow \mathbb{R}$, with continuous and
bounded derivatives up to the third order,
\begin{eqnarray}
|\mathbb{E}\phi( {x}^\epsilon_T)-\mathbb{E}\phi(\bar{X}_T)|\leq
C\epsilon^{1-r}\label{error}
\end{eqnarray}
for any $r\in(0,1)$, where $C$ is a constant independent of
$\epsilon$ (see Theorem \ref{theorem}). In the estimate above, the
averaged motion $\bar{X}_t$ solves the equation
\begin{eqnarray*}
 \begin{cases}
 \frac{\partial }{\partial
t }\bar{X}_t(\xi)=\Delta \bar{X}_t(\xi)+ \bar{F}(\bar{X}_t(\xi))+\sigma_1\dot{W}_t^{1}(\xi),\;\;\xi\in D,\;t>0,\\
 \bar{X}_0(\xi)=x(\xi), \;\xi\in D,  \\
 \bar{X}_t(0)=\bar{X}_t(l)=0, \; t\geq 0,
\end{cases}
\end{eqnarray*}
with an averaged drift  $\bar{F}(x):=\int_{L^2(D)}F(x,y)\mu^x(dy)$,
where $\mu^x$ is  the unique mixing invariant measure for fast
variable  with frozen slow component (see equation \eqref{frozen}).

In order to prove \eqref{error}, we adopt an asymptotic expansion
scheme  as in \cite{Brehier} to decompose
$\mathbb{E}\phi({x}^\epsilon_t)$ with respect to the scale parameter
$\epsilon$ in form
\begin{equation*}
\mathbb{E}\phi({x}^\epsilon_t)=u_0+\epsilon u_1+r^\epsilon,
\end{equation*}
 where the functions $u_0$, $u_1$ and $r^\epsilon$ are determined
 recursively and
 obey  certain  linear evolutionary equations.  First of all, we identify  leading term  $u_0$
as $\mathbb{E}\phi(\bar{X}_t)$ by a uniqueness argument. To this
purpose, we introduce the Kolmogorov operators with parameter  to
construct an evolutionary equation that describes both $u_0$ and
$\mathbb{E}\phi(\bar{X}_t)$. Moreover, this also allows us to
characterize the expansion coefficient $u_1$ by a Poisson equation
associated with the generator of fast process so that we obtain an
explicit expression of $u_1$. As a result, some a priori estimates
guarantee the boundedness of function $u_1$.

The next key step consists in estimate for the remainder term
described by a linear equation depending  on $\mathcal{L}_2u_1$  and
$\frac{\partial u_1}{\partial t}$, where $\mathcal{L}_2 $ is the
Kolmogorov operator for  slow motion equation with frozen fast
component. Due to the presence of unbounded operator
$\mathcal{L}_2$, we have to reduce the problem to its Galerkin
finite dimensional version. Since the noise is included in the slow
motion equation,  the It\^{o}  formula is employed to derive an
explicit expression for $\frac{\partial u_1}{\partial t}$, which is
related to the {third   derivative of} $\phi$.  This is the reason
why we have to require the test function to be  $3$-times
differentiable. After bounding the terms $\mathcal{L}_2u_1$ and
$\frac{\partial u_1}{\partial t}$, the remainder $r^\epsilon$ in the
expansions can be estimated by standard evolution equation method
and the weak error  with an explicit order  is achieved, where
It\^{o}'s formula is used again to overcome the non-integrability of
$r^\epsilon$ at zero point. We would like to stress that, due to the
regular conditions imposed on noise in slow component (see
\eqref{Trace} and \eqref{Tr-AQ}), the solution process to slow
equation enjoys values in the domain of dominating linear operator.
This allows estimates using techniques similar to those in
Br\'{e}hier \cite{Brehier}.

Up to now, to our knowledge, this is the first to obtain the weak
convergence order   for averaging of stochastic partial differential
equations in the case of a noise acting directly on the slow motion
equation. It is certainly believable that our method can be applied
to stochastic Burgers equation with regular noise such that weak
order $1^-$ in averaging is obtained. This will extend work of Dong
et al. \cite{Dong},  as we do not require the slow motion equation
is deterministic.

Averaging method plays a prominent role in the study of qualitative
behavior of dynamical systems with two time scales and   has a long
and rich history.  Their rigorous mathematical justification was due
to Bogoliubov \cite{Bogoliubov} for the deterministic dynamical
system. Further developments   to ordinary differential equations of
the averaging theory can be found in Volosov \cite{Volosov}, Besjes
\cite{Besjes} and Gikhman \cite{Gikhman}. The averaging  result  for
stochastic differential equations of It\^{o} type was firstly
introduced in Khasminskii \cite{Khas},  being an extension of the
theory to stochastic case. Since then, much progress has been made
for multiple-time-scale stochastic dynamical systems in finite
dimensions, see for instance \cite {Freidlin-Wentzell1,
Freidlin-Wentzell2, Givon1, Khas2, Khas3, Kifer1, Kifer2, Kifer3,
LiuDi, Vere1, Vere2, Wainrib, Weinan}
 and the references therein.
 In particular, averaging for finite dimensional stochastic systems with non-Gaussian
 noise may be found in \cite{Xu, Xu2, Xu3, Xu4}.
 In a series of recent papers, Cerrai and Freidlin \cite{Cerrai1}  and Cerrai \cite{Cerrai2}
  studied an infinite
 dimensional version of averaging principle for   partial differential equations
of reaction-diffusion type with additive and multiplicative Wiener
noise, respectively, where global Lipschitz conditions were imposed.
In contrast to Lipschitz setting, averaging principle for parabolic
equations with polynomial growth coefficients was explored in Cerrai
\cite{Cerrai-Siam}. For the extensions to stochastic parabolic
equations with non-Gaussian  stable noise, we are referred to Bao et
al. \cite{Bao}. For related works on averaging for infinite
dimensional stochastic dynamical systems we refer the reader to
\cite{wangwei, Fu-Liu, Fu-Liu-2, Pei, Thompson, Hogele}.

%%%%%%%%   can be adapted to some stochastic equations with multiplicative
%%%%%%%%    noise in slow component, provided that $A$ and diffusion operators
%%%%%%%%  satisfy some compatible  conditions.

The rest of the paper is arranged as follows.  Section 2 is devoted
to the general notation and framework. The ergodicity of fast
process and the main result are introduced in Section 3. Then  some
a priori estimates is presented in Section 4. In Section 5, we
present an asymptotic expansion scheme. In the final section, we
state and prove technical lemmas applied in the preceding section.

Throughout the paper, the letter $C$ below with or without
subscripts will denote positive constants whose value may change in
different occasions. We will write the dependence of constant on
parameters explicitly if it is essential.

\section{Notations and preliminary results}\label{notations}
To rewrite the system \eqref{eqation-orignal} as an abstract
evolutionary equation, we present some notations and recall some
well-known facts for later use.

For a fixed domain $D=[0, l]$, let $H$ be the real, separable
Hilbert space $L^2(D)$, endowed with the usual scalar product
$\Big(\cdot, \cdot\Big)_H$. The corresponding norm is denoted by
$\|\cdot\|$. Denote by $\mathcal{L}(H)$ the Banach space  of linear
and bounded operators from $H$ to itself, equipped with usual
operator norm.

Let $\{ e_k(\xi)\}_{k\geq 1}$ denote the complete orthonormal system
of eigenfunctions in $H$ such that, for $k = 1,2,\ldots$,
\begin{equation*}\label{eigenfunction} -\Delta
e_k(\xi)=\alpha_ke_k(\xi),\;\;e_k(0)=e_k(l)=0 ,
\end{equation*}
with $0<\alpha_1\leq\alpha_2\leq\cdots\alpha_k\leq\cdots$.  We would
like to recall the fact that
$e_k(\xi)=\sqrt{\frac{2}{l}}\sin\frac{k\pi\xi}{l}$ and
$\alpha_k=-\frac{k^2\pi^2}{l^2}$ for $k = 1,2,\cdots$.

Let $A$ be the Laplacian operator $\Delta$ satisfying zero Dirichlet
boundary condition, with domain $\mathscr{D}(A)=H^{ 1}_0(D)\cap
H^2(D)$, which generates a strongly continuous semigroup
$\{S_t\}_{t\geq 0}$ on $H$, defined by, for any $h\in H$,
\begin{eqnarray*}
S_th=\sum\limits_{k\in \mathbb{N}} e^{-\alpha_kt}e_k\Big(e_k,
h\Big)_H.
\end{eqnarray*}
Here, for the sake of brevity, we omit to write the dependence of
the spatial variable $\xi$. It is straightforward to check that
$\{S_t\}_{t\geq0}$ is a contractive semigroup  on $H$. For
$\gamma\in [0,1]$ we defined the operator $(-A)^\gamma$ by
\begin{eqnarray*}
(-A)^\gamma x=\sum\limits_{k\in \mathbb{N}}\alpha_k^\gamma x_ke_k\in
H
\end{eqnarray*}
 with domain
 \begin{eqnarray*}
 \mathscr{D}((-A)^\gamma)=\left\{x=\sum\limits_{k\in\mathbb{N}}x_ke_k\in H; \;
 \|x\|^2_{(-A)^\gamma}:=\sum\limits_{k\in\mathbb{N}}\alpha_k^{2\gamma}x_k^2<\infty
 \right\}.
 \end{eqnarray*}
Using the spectral decomposition of $A$, the semigroup
$\{S_t\}_{t\geq 0}$ enjoys the following smooth  property.
\begin{proposition}\label{proposition}
For any $\gamma\in [0, 1]$ there exists a constant $C_\gamma>0$ such
that
\begin{eqnarray}
&&\!\!\!\!\!\!\|S_tx\|_{(-A)^\gamma}\leq C_\gamma
t^{-\gamma}e^{-\frac{\alpha_1}{2}t}\|x\|, \;t>0, x\in H,\label{propo-1}\\
&&\!\!\!\!\!\!\|S_tx-S_\tau x\|\leq C_\gamma
\frac{|t-\tau|^\gamma}{\tau^\gamma}e^{-\frac{\alpha_1}{2}\tau}\|x\|,\;t,
\tau>0,x \in H,\label{propo-2}\\
&&\!\!\!\!\!\!\|S_tx-S_\tau x\|\leq C_\gamma |t-\tau|^\gamma
e^{-\frac{\alpha_1}{2}\tau}\|x\|_{(-A)^\gamma},\;t,\tau>0, x\in
\mathscr{D}((-A)^\gamma).\label{propo-3}
\end{eqnarray}
\end{proposition}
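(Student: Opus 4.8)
The plan is to exploit the explicit spectral representation $S_t x = \sum_k e^{-\alpha_k t} x_k e_k$ together with the fact that the eigenvalues satisfy $\alpha_k \ge \alpha_1 > 0$, reducing everything to elementary real-variable estimates of the scalar functions $t \mapsto e^{-\alpha_k t}$ applied coordinatewise and then summed in $\ell^2$.

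For \eqref{propo-1}, I would write
\begin{eqnarray*}
\|S_t x\|_{(-A)^\gamma}^2 = \sum_{k} \alpha_k^{2\gamma} e^{-2\alpha_k t} x_k^2,
\end{eqnarray*}
and bound each scalar factor by splitting $e^{-2\alpha_k t} = e^{-\alpha_k t} e^{-\alpha_k t} \le e^{-\alpha_k t} e^{-\alpha_1 t}$; then use the elementary inequality $s^{\gamma} e^{-s} \le C_\gamma$ for $s \ge 0$ (maximizing $s \mapsto s^\gamma e^{-s}$), applied with $s = \alpha_k t$, to get $\alpha_k^{2\gamma} e^{-\alpha_k t} = t^{-2\gamma}(\alpha_k t)^{2\gamma} e^{-\alpha_k t} \le C_\gamma^2 t^{-2\gamma}$. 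Summing over $k$ leaves $C_\gamma^2 t^{-2\gamma} e^{-\alpha_1 t}\sum_k x_k^2$, and taking square roots (and relabeling constants, $e^{-\alpha_1 t}\le e^{-\alpha_1 t}$, or absorbing into $e^{-\frac{\alpha_1}{2}t}$ by a further split) gives the claim.

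For \eqref{propo-3}, write $S_t x - S_\tau x = \sum_k (e^{-\alpha_k t} - e^{-\alpha_k \tau}) x_k e_k$, so that
\begin{eqnarray*}
\|S_t x - S_\tau x\|^2 = \sum_k \big(e^{-\alpha_k t} - e^{-\alpha_k \tau}\big)^2 x_k^2,
\end{eqnarray*}
and estimate $|e^{-\alpha_k t} - e^{-\alpha_k \tau}| \le |\alpha_k(t-\tau)| \cdot e^{-\alpha_k (\tau \wedge t)}$ by the mean value theorem (or, to match the stated one-sided form with $\tau$, bound the integral $\int$ between $\tau$ and $t$ of $\alpha_k e^{-\alpha_k s}\,ds$); then, when $x \in \mathscr{D}((-A)^\gamma)$, use $|e^{-\alpha_k t} - e^{-\alpha_k \tau}| \le |\alpha_k(t-\tau)|^\gamma e^{-\frac{\alpha_1}{2}\tau}$ obtained by interpolating the trivial bound $|e^{-\alpha_k t}-e^{-\alpha_k\tau}|\le 2 e^{-\alpha_k \tau}$ with the Lipschitz bound, i.e. raising the Lipschitz estimate to the power $\gamma$ and the trivial one to the power $1-\gamma$; the factor $\alpha_k^\gamma$ combines with $x_k$ to give $\|x\|_{(-A)^\gamma}$ after summation. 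Estimate \eqref{propo-2} follows the same interpolation but instead pairs the surplus power of $\alpha_k$ with a factor $t^{-\gamma}$ (again via $s^\gamma e^{-s}\le C_\gamma$, here using part of the exponential as in \eqref{propo-1}), so that $\|x\|$ rather than $\|x\|_{(-A)^\gamma}$ appears; the $\tau^{-\gamma}$ arises because the available decay is $e^{-\alpha_k \tau}$, forcing the loss to be measured at time $\tau$.

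The routine parts are the two scalar lemmas ($\sup_{s\ge0} s^\gamma e^{-s} < \infty$ and the MVT/interpolation bound on $|e^{-\alpha_k t}-e^{-\alpha_k\tau}|$); these are standard and I would state them inline. The only mild subtlety — the closest thing to an obstacle — is bookkeeping the exponential decay: one must be careful to reserve a fixed fraction (say half) of the exponent $e^{-\alpha_k t}$ to produce the uniform-in-$k$ decay $e^{-\frac{\alpha_1}{2}t}$ (using $\alpha_k \ge \alpha_1$) while using the remaining fraction to tame the polynomial factor $\alpha_k^\gamma$ via $s^\gamma e^{-s}\le C_\gamma$; doing this cleanly for all three inequalities simultaneously, and making sure the interpolation exponent in \eqref{propo-2}–\eqref{propo-3} lands the decay at the correct time ($\tau$, not $t$), is where care is needed, but there is no real difficulty.
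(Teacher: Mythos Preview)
Your proposal is correct and matches the paper's approach: the paper does not give a detailed proof of this proposition but simply remarks that it follows from the spectral decomposition of $A$, which is precisely the method you outline. Your coordinatewise analysis via the two scalar lemmas (the bound $\sup_{s\ge 0}s^\gamma e^{-s}<\infty$ and the interpolated mean-value estimate on $|e^{-\alpha_k t}-e^{-\alpha_k\tau}|$) is the standard way to carry this out.
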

For the  perturbation noises we suppose the following setting. For
$i=1,2$, let $W_t^i$  be the Wiener process  on a  stochastic base
$(\Omega, \mathscr{F}, \mathscr{F}_t, \mathbb{P})$  with a bounded
covariance operator $Q_i: H\rightarrow H$ defined by
$Q_ie_k=\lambda_{i,k}e_k$, where $\{\lambda_{i,k}\}_{k\in
\mathbb{N}}$ are nonnegative and $\{e_k\}_{k\in \mathbb{N}}$ is the
complete orthonormal basis in $H$. Formally, for $i=1,2,$ Wiener
processes $W^i_t$ can be written  as the infinite sums (cf. Da Prato
and Zabczyk \cite{Daprato})
\begin{eqnarray*}
W_t^i=\sum\limits_{k\in\mathbb{N}}\sqrt{\lambda_{i,k}}B^{(i)}_{t,k}e_k,
\end{eqnarray*}
where $\{B^{(i)}_{t,k}\}_{k\in \mathbb{N}}$ are mutually independent
real-valued Brownian motions on stochastic  base $(\Omega,
\mathscr{F}, \mathscr{F}_t, \mathbb{P})$.  For the sake of
simplicity we prefer to assume that both $Q_1$ and $Q_2$ have finite
trace, that is  there exists a positive constant $C$ such that
\begin{eqnarray}
Tr(Q_i)=\sum\limits_{k\in \mathbb{N}} {\lambda_{i,k}}\leq C,\;i=1,2.
\label{Trace}
\end{eqnarray}
Moreover, we also assume
\begin{eqnarray}
Tr\big((-A)Q_1\big)=\sum\limits_{k\in\mathbb{N}} \lambda_{1,k}
\alpha_k\leq C.\label{Tr-AQ}
\end{eqnarray}
Concerning the drift coefficients $F$ and $G$ we shall
impose the following conditions.\\

(H.1) For each fixed $u\in H$, the mapping $F(u,\cdot):H\rightarrow
H$ is of a class $\mathcal {C}^3$, with bounded derivatives,
uniformly with respect $u\in H$. Also suppose that for any $v\in H$,
the mapping $F(\cdot,v): H\rightarrow H$ is of class $\mathcal
{C}^3$, with   bounded derivatives, uniformly for $v\in H$.\\

 (H.2) For each fixed $u\in H$, the mapping $G(u,\cdot):H\rightarrow
H$ is of a class $\mathcal{C}^2$, with bounded derivatives,
uniformly with respect $u\in H$. Also suppose that for any $v\in H$,
the mapping $G(\cdot,v): H\rightarrow H$ is of class
$\mathcal{C}^2$, with bounded derivatives, uniformly with respect
$v\in H$. Moreover, we assume that
\begin{eqnarray*}
\sup\limits_{u\in H}\|G_v'(u,v)\|_{\mathcal{L}(H)}:=L_g<\alpha_1,
\end{eqnarray*}
where $G_v'$ denotes the derivative with respect to $v$ and
$\|\cdot\|_{\mathcal{L}(H)}$ denotes the operator norm on
$\mathcal{L}(H)$.
\begin{remark}
Under (H.1) and (H.1),  it is not difficult to verify that there
exist positive constants $K_F$ and $K_G$ such that
\begin{eqnarray}
\|F(u_1,v_1)-F(u_2,v_2)\|\leq
K_F(\|u_1-u_2\|+\|v_1-v_2\|),\;u_1,u_2, v_1,v_2\in H,
\label{F-condi-1}
\end{eqnarray}
and
\begin{eqnarray}
\|G(u_1,v_1)-G(u_2,v_2)\|\leq
K_G(\|u_1-u_2\|+\|v_1-v_2\|),\;u_1,u_2, v_1,v_2\in H,\label{g-condi}
\end{eqnarray}
which means $F, G: H\times H\rightarrow H$ are Lipschitz continuous.
\end{remark}

Once introduced the main notations, system \eqref{eqation-orignal}
can be written as
\begin{eqnarray}
\begin{cases}
dX_t^\epsilon=AX^\epsilon_tdt+F(X^\epsilon_t,Y^\epsilon_t)dt+\sigma_1dW_t^1,\;X_0^\epsilon=x, \\
dY_t^\epsilon=\frac{1}{\epsilon}AY^\epsilon_tdt+
\frac{1}{\epsilon}G(X^\epsilon_t,Y^\epsilon_t)dt+\frac{\sigma_2}{\sqrt{\epsilon}}dW_t^2,\;Y_0^\epsilon=y.
\end{cases}\label{abstr-equation}
\end{eqnarray}
By virtue of conditions \eqref{F-condi-1} and \eqref{g-condi}, it is
easy to check that system \eqref{abstr-equation} admits a unique
mild solution, which, in order to emphasize the dependence on the
initial data, is denoted by $(X_t^\epsilon(x,y),Y_t^\epsilon(x,y))$.
This means that for any $t>0$, it holds $\mathbb{P}-a.s.$ that
\begin{eqnarray}
\begin{cases}
X_t^\epsilon(x,y)=S_tx+\int_0^tS_{t-s}F(X^\epsilon_s(x,y),Y^\epsilon_s(x,y))ds
+\sigma_1\int_0^tS_{t-s}dW_s^1,\\
Y_t^\epsilon(x,y)=S_{\frac{t}{\epsilon}}y+\frac{1}{\epsilon}\int_0^tS_\frac{t-s}{\epsilon}G(X^\epsilon_s(x,y),Y^\epsilon_s(x,y))ds
+\frac{\sigma_2}{\sqrt{\epsilon}}\int_0^tS_\frac{t-s}{\epsilon}dW_s^2.\label{equation-mild}
\end{cases}
\end{eqnarray}

Moreover, by using standard arguments, we have the following lemma.
\begin{lemma}\label{moment-bound}
Under (H.1) and (H.2), for any $T>0$ and $x, y\in H $, there exists
a positive constant $C_T$ such that for any $x, y\in H$ and
$\epsilon\in (0,1]$,
\begin{eqnarray*}
&&\sup\limits_{t\in[0, T]}\mathbb{E}\|X^\epsilon_t(x,y)\|^2\leq
C_T(1+\|x\|^2+\|y\|^2),\\
&&\sup\limits_{t\in[0, T]}\mathbb{E}\|Y^\epsilon_t(x,y)\|^2\leq
C_T(1+\|x\|^2+\|y\|^2).
\end{eqnarray*}
\end{lemma}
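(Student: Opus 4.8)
The plan is to work directly from the mild formulation \eqref{equation-mild}, estimate the slow part $X^\epsilon$ and the fast part $Y^\epsilon$ separately, and then close the coupling by a joint Gr\"onwall argument. The well-posedness of \eqref{abstr-equation} already ensures $\sup_{t\in[0,T]}\big(\mathbb{E}\|X^\epsilon_t\|^2+\mathbb{E}\|Y^\epsilon_t\|^2\big)<\infty$ for each fixed $\epsilon$, so all quantities below are finite and the absorption steps are legitimate; the issue is to make the constants independent of $\epsilon\in(0,1]$. I will use throughout the exponential decay $\|S_t\|_{\mathcal{L}(H)}\leq e^{-\alpha_1 t}$ (cf.\ Proposition \ref{proposition}), the linear growth $\|F(u,v)\|\leq\|F(0,0)\|+K_F(\|u\|+\|v\|)$ and $\|G(u,v)\|\leq\|G(0,0)\|+K_G(\|u\|+\|v\|)$ following from \eqref{F-condi-1}--\eqref{g-condi}, and the It\^o isometry for stochastic convolutions together with \eqref{Trace}. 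The latter yields $\mathbb{E}\big\|\int_0^t S_{t-s}\,dW^1_s\big\|^2=\int_0^t\sum_k\lambda_{1,k}e^{-2\alpha_k(t-s)}\,ds\leq t\,Tr(Q_1)$ and, decisively for the uniformity, $\frac{\sigma_2^2}{\epsilon}\mathbb{E}\big\|\int_0^t S_{(t-s)/\epsilon}\,dW^2_s\big\|^2=\frac{\sigma_2^2}{\epsilon}\int_0^t\sum_k\lambda_{2,k}e^{-2\alpha_k(t-s)/\epsilon}\,ds\leq\frac{\sigma_2^2}{2\alpha_1}Tr(Q_2)$, a bound independent of $t$ and of $\epsilon\in(0,1]$.

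For the fast component set $Z_t:=\sup_{s\in[0,t]}\mathbb{E}\|Y^\epsilon_s(x,y)\|^2$. In \eqref{equation-mild} the initial term contributes $\|S_{t/\epsilon}y\|^2\leq\|y\|^2$ and the stochastic convolution is bounded as above. For the drift integral the decisive step is to write $G(X^\epsilon_s,Y^\epsilon_s)=\big[G(X^\epsilon_s,Y^\epsilon_s)-G(X^\epsilon_s,0)\big]+G(X^\epsilon_s,0)$ and invoke the dissipativity of (H.2): by the mean value inequality $\|G(X^\epsilon_s,Y^\epsilon_s)-G(X^\epsilon_s,0)\|\leq L_g\|Y^\epsilon_s\|$, while $\|G(X^\epsilon_s,0)\|\leq\|G(0,0)\|+K_G\|X^\epsilon_s\|$. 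Applying Cauchy--Schwarz in the time integral with respect to the weight $\frac{1}{\epsilon}e^{-\alpha_1(t-s)/\epsilon}\,ds$, whose total mass over $[0,t]$ is at most $1/\alpha_1$, the $Y^\epsilon$-dependent piece is controlled after taking expectations by $(L_g^2/\alpha_1^2)\,Z_t$, with $L_g^2/\alpha_1^2<1$ by (H.2). Splitting $Y^\epsilon_t$ through a weighted Young inequality $\|a+b\|^2\leq(1+\eta)\|a\|^2+(1+\eta^{-1})\|b\|^2$, with $a$ the $Y^\epsilon$-dependent part and $\eta$ small enough that $\theta:=(1+\eta)L_g^2/\alpha_1^2<1$, and estimating the remaining (bounded or $X^\epsilon$-dependent) pieces in the routine way, one obtains
\begin{equation*}
Z_t\leq\theta Z_t+C\Big(1+\|y\|^2+\sup_{s\in[0,t]}\mathbb{E}\|X^\epsilon_s(x,y)\|^2\Big),\qquad t\in[0,T],
\end{equation*}
hence $Z_t\leq C\big(1+\|y\|^2+\sup_{s\in[0,t]}\mathbb{E}\|X^\epsilon_s(x,y)\|^2\big)$ with $C$ independent of $\epsilon$.

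For the slow component, \eqref{equation-mild} gives $\mathbb{E}\|X^\epsilon_t\|^2\leq 3\|x\|^2+3\mathbb{E}\big(\int_0^t e^{-\alpha_1(t-s)}\|F(X^\epsilon_s,Y^\epsilon_s)\|\,ds\big)^2+3\sigma_1^2\mathbb{E}\big\|\int_0^t S_{t-s}\,dW^1_s\big\|^2$; using the linear growth of $F$, Cauchy--Schwarz against $e^{-\alpha_1(t-s)}\,ds$ and the stochastic convolution bound, this yields $\sup_{s\in[0,t]}\mathbb{E}\|X^\epsilon_s\|^2\leq C_T(1+\|x\|^2)+C\int_0^t\big(\sup_{r\in[0,s]}\mathbb{E}\|X^\epsilon_r\|^2+Z_s\big)\,ds$. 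Substituting the bound for $Z_s$ from the previous step produces a closed integral inequality for $\sup_{s\in[0,t]}\mathbb{E}\|X^\epsilon_s\|^2$ with additive term $C_T(1+\|x\|^2+\|y\|^2)$, so Gr\"onwall's inequality gives $\sup_{t\in[0,T]}\mathbb{E}\|X^\epsilon_t(x,y)\|^2\leq C_T(1+\|x\|^2+\|y\|^2)$; feeding this back into the $Z_t$-estimate gives the corresponding bound for $Y^\epsilon$.

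The delicate point, and the only real obstacle, is the uniformity in $\epsilon$: since the fast drift carries the prefactor $1/\epsilon$, a crude linear-growth bound on $G$ would leave the self-interaction term $Z_t$ with a coefficient of order $K_G^2/\alpha_1^2$, which need not be $<1$, and the Gr\"onwall loop would not close uniformly. It is precisely the strict dissipativity $L_g<\alpha_1$ of (H.2) that replaces $K_G$ by $L_g$ there, forcing the coefficient below $1$ so that it can be absorbed on the left; every other occurrence of $1/\epsilon$ is harmless because it enters only through $\frac{1}{\epsilon}\int_0^t e^{-\alpha_1(t-s)/\epsilon}\,ds\leq\alpha_1^{-1}$ and $\frac{1}{\epsilon}\int_0^t e^{-2\alpha_1(t-s)/\epsilon}\,ds\leq(2\alpha_1)^{-1}$.
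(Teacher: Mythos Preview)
Your argument is correct, and in particular your identification of the only nontrivial point---that the dissipativity condition $L_g<\alpha_1$ in (H.2) is what makes the self-interaction coefficient in the $Y^\epsilon$-estimate strictly less than $1$, so that it can be absorbed uniformly in $\epsilon$---is exactly right. The paper itself does not give a proof of this lemma at all: it simply states that the result follows ``by using standard arguments,'' so your write-up supplies precisely the standard mild-solution plus Gr\"onwall argument the authors have in mind, with the correct emphasis on why the constants do not blow up as $\epsilon\to0$.
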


To study  weak convergence, we need to introduce some notations in
connection with the test function. If $\mathcal {X}$ is a Hilbert
space equipped with inner product $(\cdot,\cdot)_\mathcal {X}$, we
denote by $\mathcal{C}^k(\mathcal {X},\mathbb{R})$ the space of  all
$k-$times continuously Fr\'{e}chet differentiable functions
$\phi:\mathcal {X}\rightarrow \mathbb{R}, x\mapsto \phi(x)$. By
$\mathcal{C}_b^k(\mathcal {X},\mathbb{R})$ we denote the subspace of
functions from $\mathcal{C}^k(\mathcal {X},\mathbb{R})$  which are
bounded together with their derivatives. For $\phi\in
\mathcal{C}^m(\mathcal {X},\mathbb{R})$, we use the notation
$D^m_{\underbrace{xx\cdots x}_{m -{times}}}\phi(x)$ for its $m$-th
derivative   at the point $x$.

Thanks to Riesz representation isomorphism, we may get the identity
for $x,h\in \mathcal {X}$:
$$D_x\phi(x)\cdot h=(D_x\phi(x), h)_\mathcal {X}.$$
For $\phi\in \mathcal{C}^2(\mathcal {X}, \mathbb{R})$, we will
identify the second derivative $D^2_{xx}\phi(x)$ with a bilinear
operator from $\mathcal {X}\times \mathcal {X}$ to $\mathbb{R}$ such
that
$$D^2_{xx}\phi(x)\cdot (h,k)=(D^2_{xx}\phi(x)h,k)_\mathcal {X},\;\; x,h,k\in \mathcal {X}.$$
On some occasions, we also use the notations $\phi',\phi''$ and
$\phi'''$ instead of $D_{x}\phi$, $D_{xx}^2\phi$ and
$D_{xxx}^3\phi$, respectively.

\section{Ergodicity of $Y_t^x$ and averaging dynamics}
For fixed $x\in H$ consider the problem associate to fast motion
with frozen slow component
\begin{eqnarray}
\begin{cases}
dY_t^x=AY_t^xdt+G(x, Y_t^x)dt+\sigma_2dW^2_t,\\
Y_0^x=y.\label{frozen}
\end{cases}
\end{eqnarray}
Notice that the drift $G: H\times H\rightarrow H $ is Lipshcitz
continuous. By arguing as before, for any fixed slow component $x\in
H$ and any initial data $y\in H$, problem \eqref{frozen} has a
unique mild solution denoted by $Y_t^{x}(y)$. Now, we consider the
transition semigroup $P_t^x$ associated with  process $Y_t^x(y)$, by
setting for any $\psi \in \mathcal {B}_b(H)$ the space of bounded
functions on $H$,
\begin{equation*}
P_t^x\psi(y)=\mathbb{E}\psi(Y_t^x(y)).
\end{equation*}
By adopting a similar approach used in \cite{Fu-Liu}, we can  show
that
\begin{equation}\label{Fast-motion-energy-bound}
\mathbb{E}\|Y^x_t(y)\|^2\leq
C\left(e^{-(\alpha_1-L_g)t}\|y\|^2+\|x\|+1\right),\; t>0
\end{equation}
for some constant $C>0$. This implies the existence of an invariant
measure   $\mu^x $ for the Markov semigroup $P^x_t$ associated with
system \eqref{frozen} on $H$ such that
$$ \int_HP^x_t \psi d\mu =\int_H\psi d\mu^x , \quad
t\geq 0
$$
for any  $\psi \in \mathcal {B}_b(H)$ (for a proof, see, e.g.,
\cite{Cerrai2}, Section 2.1). We recall that in  \cite{Cerrai-Siam}
it is proved the invariant measure has finite $2-$moments:
\begin{equation}\label{mu-Momenent-bound}
\int_H\|y\|^2\mu^x(dy)\leq C(1+\|x\|^2).
\end{equation}
Let $Y_t^x(y')$ be the solution of problem \eqref{frozen} with
initial value $Y_0=y'$, it is not difficult to show that for any
$t\geq0$,
\begin{eqnarray}\label{initial-diff}
\mathbb{E}\|Y_t^x(y)-Y_t^x( y')\|^2\leq C\|y-y'\|^2e^{-\beta t}
\end{eqnarray}
with $\beta=(\alpha_1-L_g)>0,$ which implies that $\mu^x$ is the
unique invariant measure for $P^x_t$. This allows us to define an
$H$-valued mapping $\bar{F}$ by averaging the coefficient $F$ with
respect to the invariant measure $\mu^x$, that is,
\begin{equation*}
\bar{F}(x):=\int_HF(x,y)\mu^x(dy), x\in H, \label{aver-F}
\end{equation*}
and then, by using the  condition \eqref{F-condi-1}, it is immediate
to check that
\begin{eqnarray}
\|\bar{F}(x_1)-\bar{F}(x_2)\|\leq K_F\|x_1-x_2\|, \; x_1, x_2\in H.
\label{barF-lip}
\end{eqnarray}
According to the invariant property of $\mu^x$,
\eqref{mu-Momenent-bound} and  \eqref{F-condi-1}, we have
\begin{eqnarray}
\nonumber \|\mathbb{E}F(x, Y_t^{x}( y))-\bar{F}(x) \|^2 &=&
\|\int_{H}\mathbb{E}\big(F(x, Y_t^{x}
(y)-F(x, Y_t^{x}(z)\big)\mu^x(dz) \|^2 \\
\nonumber&\leq&\int_{H}\mathbb{E}\left\|Y_t^{x}
(y)-Y_t^{x}(z)\right\|^2 \mu^x(dz)\\
\nonumber&\leq&e^{-\beta t}\int_{H}\|y-z\|^2 \mu^x(dz)\\
&\leq&Ce^{-\beta t}\big(1+\|x\|^2 +\|y\|^2
\big),\label{Averaging-Expectation}
\end{eqnarray}
which means that
\begin{eqnarray}
\bar{F}(x)=\lim\limits_{t\rightarrow +\infty}\mathbb{E}F(x,
Y^x_t(y)),\;x\in H. \label{bar-F-lim}
\end{eqnarray}
Using this limit   and Assumptions (H.1), it is possible to show
that
\begin{eqnarray}\label{bar-F-derivative}
\|\bar{F}'(x)\cdot h\|\leq C\|h\|,\; x, h\in H.
\end{eqnarray}
Now we  introduce the  effective dynamics:
\begin{eqnarray*}
 \begin{cases}
 \frac{\partial }{\partial
t }\bar{X}_t(\xi)=\Delta \bar{X}_t(\xi)+ \bar{F}(\bar{X}_t(\xi))+\sigma_1\dot{W}_t^{1}(\xi),\;\xi \in D, t> 0,\\
 \bar{X}_t(0)=\bar{X}_t(l)=0, \;t\geq 0, \\
 \bar{X}_0(\xi)=x(\xi), \;\xi\in D.
\end{cases}
\end{eqnarray*}
By using the notations introduced in Section \ref{notations} it can
be written as an abstract evolutionary equation in form
\begin{eqnarray}
 \label{Averaging-equation}
 \begin{cases}
 d\bar{X}_t=A\bar{X}_tdt+\bar{F}(\bar{X}_t)dt+\sigma_1dW^1_t,\;t>0,\\
 \bar{X}_0=x.
\end{cases}
\end{eqnarray}
 For any
initial datum $x\in H$, the equation \eqref{Averaging-equation}
admits a unique mild solution, which means that there exists a
unique adapt process $\bar{X}_t(x)$ such that
\begin{eqnarray*}
\bar{X}_t(x)=S_tx+\int_0^tS_{t-s}\bar{F}(\bar{X}_s(x))ds+\sigma_1\int_0^tS_{t-s}dW^1_s,\;\mathbb{P}-a.s.,\;
t\geq0.
\end{eqnarray*}
Moreover, for any $T>0$ it can be easily proved  that
\begin{eqnarray}
\mathbb{E}\|\bar{X}_t(x)\|^2\leq C_T(1+\|x\|^2),\; t\in [0,
T].\label{bar-x-moment}
\end{eqnarray}
Thanks to averaging principle (see Cerrai \cite{Cerrai2} for
details), we have that the limit
\begin{eqnarray}
\lim\limits_{\epsilon\rightarrow 0+}\sup\limits_{0\leq t\leq
T}\mathbb{E}\|\bar{X}_t(x)-{X}^{\epsilon
}_t(x,y)\|^2=0\label{aver-infinite}
\end{eqnarray}
for any fixed $T>0.$ This means that the slow process
$X^\epsilon_t(x,y)$ enjoys strong convergence  to the averaging
process $\bar{X}_t(x)$. Moreover, the strong order in averaging is
$\frac{1}{5}^-$ (Br\'{e}hier  \cite{Brehier}). The weak convergence
using test functions is obvious.  Our aim is to establish rigorously
weak error bounds for the limit of slow process  with respect to
scale parameter $\epsilon$. The main result of this paper is the
following, whose proof is postponed in the end of Section
\ref{asym}.
\begin{theorem}\label{theorem}
Assume that $x\in \mathscr{D}((-A)^\theta)$ for some $\theta\in (0,
1]$ and $y \in H.$ Then, under (H.1)
 and (H.2), for any $r\in (0, 1)$, $T>0$ and $\phi\in C_b^3(H, \mathbb{R})$, there exists a
 constant $C_{\theta,r,T,\phi,x,y}$ such that
 \begin{eqnarray*}
 \left|\mathbb{E}\phi(X^\epsilon_T(x,y))-\mathbb{E}\phi(\bar{X}_T(x))\right|\leq
 C_{\theta,r,T,\phi,x,y}\epsilon^{1-r}.
 \end{eqnarray*}
\end{theorem}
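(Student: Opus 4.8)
The plan is to follow the asymptotic expansion strategy for the Kolmogorov equation, as outlined in the introduction and pioneered by Br\'{e}hier. Define $u^\epsilon(t,x,y) := \mathbb{E}\phi(X^\epsilon_t(x,y))$, which solves the backward Kolmogorov equation $\partial_t u^\epsilon = \frac{1}{\epsilon}\mathcal{L}_1 u^\epsilon + \mathcal{L}_2 u^\epsilon$, where $\mathcal{L}_1$ is the generator of the fast frozen process \eqref{frozen} (acting in the $y$ variable) and $\mathcal{L}_2$ is the generator of the slow process with frozen fast component (acting in the $x$ variable, and including the $A$-drift, the $F$-drift, and the $\sigma_1$-diffusion terms). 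First I would posit the expansion $u^\epsilon = u_0 + \epsilon u_1 + r^\epsilon$ and collect powers of $\epsilon$: the $\epsilon^{-1}$ term forces $\mathcal{L}_1 u_0 = 0$, so by ergodicity of $P^x_t$ with invariant measure $\mu^x$, the function $u_0$ is independent of $y$; the $\epsilon^0$ term then gives $\partial_t u_0 = \mathcal{L}_1 u_1 + \mathcal{L}_2 u_0$. Integrating this identity against $\mu^x(dy)$ and using $\int_H \mathcal{L}_1 u_1 \, d\mu^x = 0$ together with $\int_H \mathcal{L}_2^{(F)} u_0 \, \mu^x(dy) = \bar{\mathcal{L}}_2 u_0$ (the averaged drift replacing $F$), one identifies $u_0$ as the solution of the backward Kolmogorov equation for the averaged system \eqref{Averaging-equation}; that is, $u_0(t,x) = \mathbb{E}\phi(\bar{X}_t(x))$, by uniqueness for that linear evolution equation. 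With $u_0$ in hand, $u_1$ is then defined as the solution of the Poisson equation $\mathcal{L}_1 u_1 = \partial_t u_0 - \mathcal{L}_2 u_0 =: -\chi(t,x,y)$, which is solvable precisely because $\chi$ has zero $\mu^x$-average, and admits the explicit representation $u_1(t,x,y) = \int_0^\infty \mathbb{E}\,\chi(t,x,Y^x_s(y))\,ds$; the exponential mixing \eqref{initial-diff} and the a priori bounds guarantee this integral converges and that $u_1$ is bounded, with controlled growth in $x,y$.

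The heart of the argument is the estimate on the remainder $r^\epsilon = u^\epsilon - u_0 - \epsilon u_1$. Subtracting the equations, $r^\epsilon$ satisfies $\partial_t r^\epsilon = \frac{1}{\epsilon}\mathcal{L}_1 r^\epsilon + \mathcal{L}_2 r^\epsilon - \epsilon\big(\partial_t u_1 - \mathcal{L}_2 u_1\big)$ with $r^\epsilon(0,x,y) = -\epsilon u_1(0,x,y)$. By a Duhamel/Feynman--Kac representation along the full two-scale process, $r^\epsilon(T,x,y)$ is expressed as $-\epsilon\,\mathbb{E}u_1(0, X^\epsilon_T, Y^\epsilon_T) + \epsilon\int_0^T \mathbb{E}\big[(\mathcal{L}_2 u_1 - \partial_t u_1)(T-s, X^\epsilon_s, Y^\epsilon_s)\big]ds$ — actually it is cleaner to run the expansion forward, so I would write it as the initial datum term plus the time integral of the source against the evolution of $(X^\epsilon, Y^\epsilon)$. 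The first term is $O(\epsilon)$ immediately from boundedness of $u_1$. For the source term, the key obstruction — and the reason for requiring $\phi \in C^3_b$ — is controlling $\partial_t u_1$: differentiating the representation of $u_1$ in $t$ brings in $\partial_t u_0$, hence $\partial_t \mathbb{E}\phi(\bar{X}_t(x))$, which via It\^{o}'s formula applied to $\phi(\bar{X}_t)$ involves $D^2\phi$ acting on the $\sigma_1$-noise (this is where the slow noise forces one extra derivative compared to the deterministic-slow case), and then one more differentiation of $u_0$ inside $\chi$ costs a third derivative. I would show $\|\mathcal{L}_2 u_1(t,x,y)\| + \|\partial_t u_1(t,x,y)\| \leq C(1 + \|x\|^2_{(-A)^\theta} + \|y\|^2)$ or similar, uniformly in $t\in[0,T]$, which is exactly the content of the "technical lemmas" promised for the final section; here the regularity assumptions \eqref{Trace} and \eqref{Tr-AQ} on $Q_1$ ensure $\bar{X}_t$ and $X^\epsilon_t$ live in $\mathscr{D}((-A)^\gamma)$ so that the unbounded operator $\mathcal{L}_2$ (which contains $A$) can be applied to $u_1$.

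Two further technical points need care. First, the operator $\mathcal{L}_2$ is unbounded, so the formal manipulations above are not literally justified in infinite dimensions; I would therefore carry out the whole expansion for the Galerkin projection $\mathcal{L}_2^{(n)}$ onto $\mathrm{span}\{e_1,\dots,e_n\}$, obtain $\epsilon$-uniform bounds independent of $n$, and pass to the limit $n\to\infty$ at the end. Second, the remainder source and the bound on $u_1$ involve factors that are not integrable up to $t=0$ (the smoothing estimate \eqref{propo-1} contributes $s^{-\gamma}$ singularities when estimating derivatives of $u_0$ near the initial time); to absorb this I would again invoke It\^{o}'s formula — applied this time to $s \mapsto r^\epsilon(T-s, X^\epsilon_s, Y^\epsilon_s)$ — splitting the time integral at a small $\delta$ and balancing the contributions, which produces the loss $\epsilon^{1-r}$ rather than the clean $\epsilon$: the arbitrarily small $r>0$ is precisely the price paid for the non-integrable singularity at zero. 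Collecting: $|\mathbb{E}\phi(X^\epsilon_T) - \mathbb{E}\phi(\bar{X}_T)| = |u^\epsilon(T,x,y) - u_0(T,x)| \leq \epsilon\|u_1\|_\infty + |r^\epsilon(T,x,y)| \leq C_{\theta,r,T,\phi,x,y}\,\epsilon^{1-r}$, which is the claim. The main obstacle, as indicated, is the a priori regularity estimate on $\partial_t u_1$ and $\mathcal{L}_2 u_1$ with the right dependence on $\|x\|_{(-A)^\theta}$, since this requires differentiating the averaged-semigroup flow three times while keeping track of the interaction between the $A$-smoothing and the two independent noises.
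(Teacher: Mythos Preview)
Your proposal is correct and follows essentially the same approach as the paper: Galerkin reduction, the expansion $u^\epsilon = u_0 + \epsilon u_1 + r^\epsilon$ with $u_0$ identified as $\bar u$ via the averaged Kolmogorov equation, $u_1$ constructed from the Poisson equation with the integral representation $\int_0^\infty P_s^x\rho\,ds$, and the remainder controlled through a variation-of-constants formula cut off at a small time $\delta_\epsilon$, with It\^o's formula supplying both the $\partial_t u_1$ estimate (hence the need for $\phi\in C_b^3$) and the small-time control of $r^\epsilon(\delta_\epsilon,\cdot)$. One small correction: the bounds on $\partial_t u_1$ and $\mathcal{L}_2 u_1$ are \emph{not} uniform in $t$ --- the paper obtains $|\partial_t u_{1,n}|\le C_T(1+t^{-1}+t^{\theta-1})(\cdots)$ and $|\mathcal{L}_2^n u_{1,n}|\le C_T(1+\|A_n x\|+\cdots)(\cdots)$, and it is precisely the $t^{-1}$ singularity together with the $\epsilon^{-r}$ blow-up in $\mathbb{E}\|A_n X^{\epsilon,n}_t\|$ (Lemma~\ref{A-X}) that forces the $\delta_\epsilon$ cutoff and produces the $\epsilon^{1-r}$ rate --- but you acknowledge this mechanism in your final paragraph.
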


\section{Some a priori estimates}
Before proving the main results, we need to state some technical
lemmas used in   subsequent section.
\begin{lemma}\label{Xt-Xs}
Let the conditions  (H.1)  and  (H.2)  be satisfied and fix $x,y\in
H$ and $T>0$. Then for any $r\in (0,\frac{1}{2})$ there exists a
constant  $C_{r,T}>0$ such that for any   $0< s\leq t\leq T$, we
have
\begin{eqnarray*}
&&(\mathbb{E}\|X_t^\epsilon(x,y)-X^\epsilon_s(x,y)\|^2)^{\frac{1}{2}}\\
&&\leq
C_{r,T}\frac{|t-s|^{1-r}} {s^{1-r}}\|x\|\\
&&+ C_{r,T}(|t-s|^{\frac{1}{2}}+|t-s|^{1-r}+|t-s|^r)(1+\|x\|+\|y\|).
\end{eqnarray*}
\begin{proof}
We can write
\begin{eqnarray}
X_t^\epsilon(x,y)-X_s^\epsilon(x,y)&=&(S_t -S_s)x+\int_s^tS_{t-\tau}F(X_\tau^\epsilon(x,y),Y^\epsilon_\tau(x,y))d\tau\nonumber\\
&&+\int_0^s(S_{t-\tau}-S_{s-\tau})F(X_\tau^\epsilon(x,y),Y^\epsilon_\tau(x,y))d\tau\nonumber\\
&&+\int_s^tS_{t-\tau}dW^1_\tau+\int_0^s(S_{t-\tau}-S_{s-\tau})dW^1_\tau.\label{Xt-Xs-0}
\end{eqnarray}
In the next, we estimate separately the different terms in
\eqref{Xt-Xs-0}. By using  \eqref{propo-2}, for the first term  we
have
\begin{eqnarray}
 \|(S_t-S_s)x\|\leq C_r\frac{|t-s|^{1-r}} {s^{1-r}}\|x\|.\label{Xt-Xs-1}
 \end{eqnarray}
For the second term, by Lemma \ref{moment-bound} and H\"{o}lder's
inequality, we obtain
\begin{eqnarray}
&&\mathbb{E}\|\int_s^tS_{t-\tau}F(X_\tau^\epsilon(x,y),Y^\epsilon_\tau(x,y))d\tau\|^2\nonumber\\
&&\leq
|t-s|\int_s^t\mathbb{E}\|S_{t-\tau}F(X_\tau^\epsilon(x,y),Y^\epsilon_\tau(x,y))\|^2d\tau\nonumber\\
&&\leq C|t-s|\int_s^t\mathbb{E}
(1+\|X_\tau^\epsilon(x,y)\|^2+\|Y^\epsilon_\tau(x,y)\|^2)d\tau\nonumber\\
&&\leq C_T|t-s|(1+\|x\|^2+\|y\|^2).\label{Xt-Xs-2}
\end{eqnarray}
Concerning the third term, by using \eqref{propo-2}, we can deduce
that
\begin{eqnarray*}
&&\mathbb{E}\|\int_0^s(S_{t-\tau}-S_{s-\tau})F(X_\tau^\epsilon(x,y),Y^\epsilon_\tau(x,y))d\tau\|^2\\
&&\leq\mathbb{E}\left[\int_0^s\|(S_{t-\tau}-S_{s-\tau})F(X_\tau^\epsilon(x,y),Y^\epsilon_\tau(x,y))\|d\tau\right]^2\\
&&\leq C_r\mathbb{E}\left[\int_0^s\frac{(t-s)^{1-r}}{(s-\tau)^{1-r}}
{e^{-\frac{\alpha_1}{2}(s-\tau)}}\|F(X_\tau^\epsilon(x,y),Y^\epsilon_\tau(x,y))\|d\tau\right]^2.\\
\end{eqnarray*}
In view of Lemma \ref{moment-bound} and Minkowski inequality, we get
\begin{eqnarray}
&&\mathbb{E}\|\int_0^s(S_{t-\tau}-S_{s-\tau})F(X_\tau^\epsilon(x,y),Y^\epsilon_\tau(x,y))d\tau\|^2\nonumber\\
&&\leq
C_r|t-s|^{2(1-r)}\left[\int_0^s\frac{e^{-\frac{\alpha_1}{2}(s-\tau)}}{(s-\tau)^{1-r}}
\big(\mathbb{E}\|F(X_\tau^\epsilon(x,y),Y^\epsilon_\tau(x,y))\|^2\big)^{\frac{1}{2}}d\tau\right]^2\nonumber\\
&&\leq
C_r|t-s|^{2(1-r)}\left[\int_0^s\frac{e^{-\frac{\alpha_1}{2}(s-\tau)}}{(s-\tau)^{1-r}}
\mathbb{E}\big(1+\| X_\tau^\epsilon(x,y)\|
+\|Y^\epsilon_\tau(x,y)\|\big)
d\tau\right]^2\nonumber\\
&&\leq C_{r,T}|t-s|^{2(1-r)}(1+\|x\|^2+\|y\|^2),\label{Xt-Xs-3}
\end{eqnarray}
here we have used fact
$\int_0^{+\infty}\frac{e^{-\frac{\alpha_1}{2}s}}{s^{1-r}}ds<+\infty$
in the last step. For the forth term, we directly have
\begin{eqnarray}
\mathbb{E}\|\int_s^tS_{t-\tau}dW^1_\tau\|^2&=&\sum\limits_{k=1}^\infty\lambda_{1,k}\int_s^t\|S_{t-\tau}e_k\|^2d\tau\nonumber\\
&\leq&Tr(Q_1)|t-s|.\label{Xt-Xs-4}
\end{eqnarray}
The final term on the right-hand side of the  \eqref{Xt-Xs-0} can be
treated as follows:
\begin{eqnarray*}
\mathbb{E}\|\int_0^s(S_{t-\tau}-S_{s-\tau})dW^1_\tau\|^2&=&\sum\limits_{k=1}^\infty\lambda_{1,k}\int_0^s\|(S_{t-\tau}-S_{s-\tau})e_k\|^2d\tau\\
&=&\sum\limits_{k=1}^\infty\lambda_{1,k}\int_0^s\|\int_{s-\tau}^{t-\tau}AS_{\rho}e_kd\rho\|^2d\tau\\
&\leq&
C\sum\limits_{k=1}^\infty\lambda_{1,k}\int_0^s|\int_{s-\tau}^{t-\tau}\frac{1}{\rho}d\rho|^2d\tau,
\end{eqnarray*}
here the last inequality following from fact  $\|AS_t\|_{\mathcal
{L}(H)}\leq Ct^{-1}$ for $t>0$. Note that for any $r\in (0,
\frac{1}{2})$, it holds
\begin{eqnarray*}
\int_0^s\left|\int_{s-\tau}^{t-\tau}\frac{1}{\rho}d\rho\right|^2d\tau&\leq&
\int_0^s(s-\tau)^{-2r}\left|\int_{s-\tau}^{t-\tau}\frac{1}{\rho^{1-r}}d\rho\right|^2d\tau\\
&=& r^{-2}\int_0^s(s-\tau)^{-2r}[(t-\tau)^r-(s-\tau)^r]^2d\tau\\
&\leq& r^{-2}\int_0^s(s-\tau)^{-2r}(t-s)^{2r}d\tau\\
&\leq&r^{-2}|t-s|^{2r}\frac{1}{1-2r}s^{1-2r}\\
&\leq&C_{r}|t-s|^{2r}T^{1-2r},
\end{eqnarray*}
which implies that
\begin{eqnarray}
\mathbb{E}\|\int_0^s(S_{t-\tau}-S_{s-\tau})dW^1_\tau\|^2\leq
C_{r,T}|t-s|^{ 2r}. \label{Xt-Xs-5}
\end{eqnarray}
By taking  \eqref{Xt-Xs-1}-\eqref{Xt-Xs-5} into account, we can
deduce that\begin{eqnarray*}
&&(\mathbb{E}\|X_t^\epsilon(x,y)-X^\epsilon_s(x,y)\|^2)^{\frac{1}{2}}\\
&&\leq
C_{r,T}\frac{|t-s|^{1-r}} {s^{1-r}}\|x\|\\
&&+ C_{r,T}(|t-s|^{\frac{1}{2}}+|t-s|^{1-r}+|t-s|^r)(1+\|x\|+\|y\|).
\end{eqnarray*}
\end{proof}
\end{lemma}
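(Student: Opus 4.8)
The plan is to start from the mild formulation \eqref{equation-mild} and write the increment $X_t^\epsilon(x,y)-X_s^\epsilon(x,y)$ as the sum of five contributions, exactly as in \eqref{Xt-Xs-0}: the free term $(S_t-S_s)x$; the short drift integral $\int_s^t S_{t-\tau}F(X_\tau^\epsilon,Y_\tau^\epsilon)\,d\tau$; the long drift increment $\int_0^s(S_{t-\tau}-S_{s-\tau})F(X_\tau^\epsilon,Y_\tau^\epsilon)\,d\tau$; the short stochastic convolution $\int_s^t S_{t-\tau}\,dW_\tau^1$; and the long stochastic convolution increment $\int_0^s(S_{t-\tau}-S_{s-\tau})\,dW_\tau^1$. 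I would then estimate each term separately in $L^2(\Omega)$ and collect the exponents of $|t-s|$.

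By Proposition \ref{proposition}, estimate \eqref{propo-2} with $\gamma=1-r$ gives at once $\|(S_t-S_s)x\|\le C_r|t-s|^{1-r}s^{-(1-r)}\|x\|$; this is the only source of the $s^{-(1-r)}$ singularity at the left endpoint. For the short drift integral, Hölder's inequality in $\tau$, the contractivity of $S_\cdot$, the linear growth \eqref{F-condi-1} and the moment bound of Lemma \ref{moment-bound} give an $L^2(\Omega)$-bound of order $|t-s|\,(1+\|x\|+\|y\|)$, which is harmless. For the short stochastic convolution, the It\^o isometry together with the finite-trace assumption \eqref{Trace} gives $\mathbb{E}\big\|\int_s^t S_{t-\tau}\,dW_\tau^1\big\|^2\le\mathrm{Tr}(Q_1)\,|t-s|$, which produces the $|t-s|^{1/2}$ term. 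For the long drift increment, I would insert \eqref{propo-2} inside the integral (with the two times being $t-\tau$ and $s-\tau$ and the smaller one, $s-\tau$, playing the role of ``$\tau$'' there), then apply Minkowski's integral inequality, the growth of $F$, Lemma \ref{moment-bound}, and the integrability of $\rho\mapsto\rho^{-(1-r)}e^{-\alpha_1\rho/2}$ on $(0,\infty)$, arriving at a bound of order $|t-s|^{1-r}(1+\|x\|+\|y\|)$.

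The main obstacle is the long stochastic convolution increment $\int_0^s(S_{t-\tau}-S_{s-\tau})\,dW_\tau^1$. By the It\^o isometry and the diagonalization of $Q_1$, its second moment equals $\sum_k\lambda_{1,k}\int_0^s\|(S_{t-\tau}-S_{s-\tau})e_k\|^2\,d\tau$. Writing $S_{t-\tau}-S_{s-\tau}=\int_{s-\tau}^{t-\tau}AS_\rho\,d\rho$ and using $\|AS_\rho\|_{\mathcal{L}(H)}\le C\rho^{-1}$ reduces the matter to controlling $\int_0^s\big(\int_{s-\tau}^{t-\tau}\rho^{-1}\,d\rho\big)^2\,d\tau$; the naive bound $\int_{s-\tau}^{t-\tau}\rho^{-1}\,d\rho=\log\frac{t-\tau}{s-\tau}$ fails to be square-integrable as $\tau\to s$, so some care is needed. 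The device I would use is to trade a power $r$: for $\rho\ge s-\tau$ one has $\rho^{-1}\le(s-\tau)^{-r}\rho^{-(1-r)}$, whence $\int_{s-\tau}^{t-\tau}\rho^{-1}\,d\rho\le r^{-1}(s-\tau)^{-r}\big[(t-\tau)^r-(s-\tau)^r\big]\le r^{-1}(s-\tau)^{-r}|t-s|^r$, using the subadditivity $a^r-b^r\le(a-b)^r$ valid for $0<b<a$ and $r\in(0,1)$. Since $\int_0^s(s-\tau)^{-2r}\,d\tau=\frac{s^{1-2r}}{1-2r}\le C_rT^{1-2r}$ is finite precisely when $r<\tfrac12$ --- this is exactly why the statement is restricted to $r\in(0,\tfrac12)$ --- and $\sum_k\lambda_{1,k}=\mathrm{Tr}(Q_1)<\infty$ by \eqref{Trace}, one obtains a bound of order $|t-s|^{2r}$ for the second moment, i.e. $|t-s|^r$ in $L^2(\Omega)$.

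Finally I would collect the five estimates and use the triangle inequality in $L^2(\Omega)$ to reach the asserted bound. I expect the power-trading step for the stochastic convolution increment, and the concomitant identification of the admissible range $r\in(0,\tfrac12)$, to be the only genuinely delicate point; the remaining estimates are routine consequences of Proposition \ref{proposition}, the contractivity of the semigroup, the linear growth of $F$, and the moment bounds of Lemma \ref{moment-bound}. It is also worth noting that only the trace condition \eqref{Trace} is needed here, not the stronger spatial-regularity assumption \eqref{Tr-AQ}.
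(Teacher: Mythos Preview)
Your proposal is correct and follows essentially the same approach as the paper's proof: the same five-term decomposition from the mild formulation, the same application of \eqref{propo-2} for the free and long drift terms, the same H\"older/It\^o--isometry/trace arguments for the short terms, and, crucially, the identical power-trading device $\rho^{-1}\le(s-\tau)^{-r}\rho^{-(1-r)}$ combined with the subadditivity $(t-\tau)^r-(s-\tau)^r\le(t-s)^r$ for the long stochastic convolution increment, yielding the $|t-s|^{2r}$ bound and pinpointing the restriction $r<\tfrac12$. Your closing remark that only \eqref{Trace} (and not \eqref{Tr-AQ}) is used here is also accurate.
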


\begin{lemma}\label{Yt-Ys}
Let the conditions  (H.1)  and  (H.2) be satisfied and fix $x,y\in
H$ and $T>0$. Then for any $r\in (0,\frac{1}{4})$ there exists a
constant $C_{r, T}>0$ such that for any   $0< s\leq t\leq T$, we
have
\begin{eqnarray*}
(\mathbb{E}\|Y_t^\epsilon(x,y)-Y_s^\epsilon(x,y)\|^2)^{\frac{1}{2}}\leq
C_{r,T}(1+\|x\|+\|y\|)\left[\frac{|t-s|^{r}}{s^{r}}+\frac{|t-s|^{r}}{\epsilon^{r}}\right].
\end{eqnarray*}
\begin{proof}
 We have the decomposition
\begin{eqnarray}
Y_t^\epsilon(x,y)-Y_s^\epsilon(x,y)&=&[S_{\frac{t}{\epsilon}}y-S_{\frac{s}{\epsilon}}y]+
\frac{1}{\epsilon}\int_s^tS_{\frac{t-\tau}{\epsilon}}G(X_\tau^\epsilon(x,y),Y^\epsilon_\tau(x,y))d\tau\nonumber\\
&&+\frac{1}{\epsilon}\int_0^s(S_{\frac{t-\tau}{\epsilon}}-S_{\frac{s-\tau}{\epsilon}})G(X_\tau^\epsilon(x,y),Y^\epsilon_\tau(x,y))d\tau\nonumber\\
&&+\frac{1}{\sqrt{\epsilon}}\int_s^tS_{\frac{t-\tau}{\epsilon}}
dW_\tau^2+\frac{1}{\sqrt{\epsilon}}\int_0^s(S_{\frac{t-\tau}{\epsilon}}-S_{\frac{s-\tau}{\epsilon}})
dW_\tau^2\nonumber\\
&:=&\sum\limits_{k=1}^5J^\epsilon_k(t,s).\label{Yt-Ys-0}
\end{eqnarray}
By \eqref{propo-2}, it is immediate to check that
\begin{eqnarray}
\|J^\epsilon_1(t,s)\|\leq
C_r\frac{|t-s|^r}{s^r}\|y\|.\label{Yt-Ys-1}
\end{eqnarray}
By Minkowski inequality and Lemma \ref{moment-bound}, one can
estimate $J^\epsilon_2(t,s)$ as follows:
\begin{eqnarray}
\mathbb{E}\|J^\epsilon_2(t,s)\|^2&\leq&
\mathbb{E}\left(\frac{C}{\epsilon}\int_s^te^{-\alpha_1\frac{t-\tau}{\epsilon}}(1+
\|X_\tau^\epsilon(x,y)\|
+ \|Y^\epsilon_\tau(x,y)\|)d\tau\right)^2\nonumber\\
&\leq& C\mathbb{E}\left(\int_0^\frac{t-s}{\epsilon}e^{-\alpha_1\tau}
(1+\|X_{t-\epsilon\tau}^\epsilon\|+\|Y_{t-\epsilon\tau}^\epsilon\|)d\tau\right)^2\nonumber\\
&\leq& C\left(\int_0^\frac{t-s}{\epsilon}e^{-\alpha_1\tau}
\Big(\mathbb{E}(1+\|X_{t-\epsilon\tau}^\epsilon(x,y)\|+\|Y_{t-\epsilon\tau}^\epsilon(x,y)\|)^2\Big)^\frac{1}{2}d\tau\right)^2\nonumber\\
&=&C_T(1+\|x\|^2+\|y\|^2)(1-e^{-\alpha_1\frac{t-s}{\epsilon}})^2\nonumber\\
&\leq&C_{r, T}(1+\|x\|^2+\|y\|^2)\frac{|t-s|^{2r}}{\epsilon^{2r}},
\label{Yt-Ys-2}
\end{eqnarray}
where, the last step is due to the inequality $1-e^{-a}\leq C_ra^r$
for $a>0, r\in (0, 1).$ Concerning $J^\epsilon_3(t,s)$,  according
to \eqref{propo-2}, Lemma \ref{moment-bound}  and  Minkowski
inequality, we get
\begin{eqnarray}
&&\!\!\!\!\!\!\!\!\!\mathbb{E}\|J^\epsilon_3(t,s)\|^2\nonumber\\
&\leq&\mathbb{E}\left(\frac{1}{\epsilon}\int_0^s\|S_{\frac{t-\tau}{\epsilon}}-S_{\frac{s-\tau}{\epsilon}}\|_{\mathcal{L}(H)}
 \|G(X_\tau^\epsilon(x,y),Y^\epsilon_\tau(x,y))\|d\tau\right)^2\nonumber\\
&\leq&\mathbb{E}\left(\frac{C_r}{\epsilon}\int_0^s\frac{(t-s)^r}{(s-\tau)^r}e^\frac{-\alpha_1(s-\tau)}{2\epsilon}
(1+ \|X_\tau^\epsilon(x,y)\|+ \|Y_\tau^\epsilon(x,y)\|)d\tau\right)^2\nonumber\\
&\leq&   C_r|t-s|^{2r}  \left(\frac{1}{\epsilon}\int_0^s
\frac{e^\frac{-\alpha_1(s-\tau)}{2\epsilon}}{(s-\tau)^r}
\big(\mathbb{E}(1+ \|X_\tau^\epsilon(x,y)\|+ \|Y_\tau^\epsilon(x,y)\|)^2 \big)^{\frac{1}{2}} d\tau       \right)^2 \nonumber\\
&\leq&C_r|t-s|^{2r}\frac{1}{\epsilon^{2r}}(1+\|x\|^2+\|y\|^2)\left(\int_0^{s/\epsilon}\frac{e^{-\frac{\alpha_1}{2}\tau}}{\tau^r}
d\tau\right)^2\nonumber\\
&\leq&C_r\frac{|t-s|^{2r}}{\epsilon^{2r}}(1+\|x\|^2+\|y\|^2)\left(\int_0^{+\infty}\frac{1}{\tau^r}e^{-\frac{\alpha_1}{2}\tau}d\tau\right)^2\nonumber\\
&\leq&C_r\frac{|t-s|^{2r}}{\epsilon^{2r}}(1+\|x\|^2+\|y\|^2).
\label{Yt-Ys-3}
\end{eqnarray}
For $J^\epsilon_4(t,s)$ we have
\begin{eqnarray}
\mathbb{E}\|J^\epsilon_4(t,s)\|^2&=&\frac{1}{\epsilon}\int_s^t\sum\limits_{k\in\mathbb{N}}
e^{-2(t-\tau)\alpha_k/\epsilon}
d\tau\nonumber\\
&=&\sum\limits_{k\in\mathbb{N}}\int_0^{(t-s)/\epsilon}e^{-2
\tau\alpha_k} d\tau\nonumber\\
&=&\sum\limits_{k\in\mathbb{N}}\frac{1}{2\alpha_k}(1-e^{-2\alpha_k(t-s)/\epsilon})\nonumber\\
&\leq& C_r\frac{|t-s|^{2r}}{\epsilon^{2r}}
\sum\limits_{k\in\mathbb{N}}\frac{1}{\alpha_k^{1-2r}}.\nonumber
\end{eqnarray}
Recalling that we have assumed $r\in (0, \frac{1}{4})$,  it follows
that
$\sum\limits_{k\in\mathbb{N}}\frac{1}{\alpha_k^{1-2r}}<+\infty.$
Therefore, we obtain
\begin{eqnarray}
\mathbb{E}\|J^\epsilon_4(t,s)\|^2&\leq&
C_r\frac{|t-s|^{2r}}{\epsilon^{2r}}.\label{Yt-Ys-4}
\end{eqnarray}
For $J^\epsilon_5(t,s)$ we have
\begin{eqnarray}
\mathbb{E}\|J^\epsilon_5(t,s)\|&=&\frac{1}{\epsilon}\int_0^s\sum\limits_{k\in\mathbb{N}}e^{-2(s-\tau)\alpha_k/\epsilon}
(1-e^{-(t-s)\alpha_k/\epsilon})^2d\tau\nonumber\\
&\leq&\sum\limits_{k\in\mathbb{N}}(1-e^{-(t-s)\alpha_k/\epsilon})^2\frac{1}{2\alpha_k}(1-e^{-2s\alpha_k/\epsilon})\nonumber\\
&\leq&\sum\limits_{k\in\mathbb{N}}(1-e^{-(t-s)\alpha_k/\epsilon})^2\frac{1}{2\alpha_k}\nonumber\\
&\leq&C_r\sum\limits_{k\in\mathbb{N}}\frac{|t-s|^{2r}}{\epsilon^{2r}}\frac{1}{\alpha_k^{1-2r}}\nonumber\\
&\leq&C_r \frac{|t-s|^{2r}}{\epsilon^{2r}}. \label{Yt-Ys-5}
\end{eqnarray}
Collecting together  \eqref{Yt-Ys-1}-\eqref{Yt-Ys-5}, we obtain

\begin{eqnarray*}
(\mathbb{E}\|Y_t^\epsilon(x,y)-Y_s^\epsilon(x,y)\|^2)^{\frac{1}{2}}\leq
C_{r,T}(1+\|x\|+\|y\|)\left[\frac{|t-s|^{r}}{s^{r}}+\frac{|t-s|^{r}}{\epsilon^{r}}\right].
\end{eqnarray*}
\end{proof}
\end{lemma}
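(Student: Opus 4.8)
The plan is to mimic the decomposition already used for the slow component in Lemma \ref{Xt-Xs}, but now working from the mild formula \eqref{equation-mild} for $Y_t^\epsilon(x,y)$ and keeping careful track of the two distinct sources of singular behaviour: the parabolic smoothing factor $s^{-r}$ near $s=0$ and the fast-relaxation factor $\epsilon^{-r}$. Writing the difference $Y_t^\epsilon(x,y)-Y_s^\epsilon(x,y)$ via \eqref{equation-mild}, I would split it into five terms $J_1^\epsilon,\dots,J_5^\epsilon$: the initial-data semigroup increment $S_{t/\epsilon}y-S_{s/\epsilon}y$; the drift convolution over the new slab $[s,t]$; the semigroup-difference drift term over $[0,s]$; and the two corresponding stochastic-convolution pieces. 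Each term is bounded separately and the bounds are combined by Minkowski's inequality.

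For the initial-data term I would apply the smoothing estimate \eqref{propo-2} at the rescaled times $t/\epsilon$ and $s/\epsilon$; the rescaling is exactly compensating, since $|t/\epsilon-s/\epsilon|^r/(s/\epsilon)^r=|t-s|^r/s^r$, so this piece contributes $C_r(|t-s|^r/s^r)\|y\|$ and is the source of the $s^{-r}$ singularity. For the two drift terms I would combine the linear-growth consequence of \eqref{g-condi}, the uniform moment bound of Lemma \ref{moment-bound}, and the elementary inequality $1-e^{-a}\le C_r a^r$ valid for $a>0$, $r\in(0,1)$. After the change of variable $\tau\mapsto(t-\tau)/\epsilon$ (respectively $(s-\tau)/\epsilon$) the prefactor $1/\epsilon$ is absorbed, the integrals $\int_0^{\infty}\tau^{-r}e^{-\alpha_1\tau/2}\,d\tau$ converge because $r<1$, and the residual factor $1-e^{-\alpha_1(t-s)/\epsilon}$ produces the desired $|t-s|^r/\epsilon^r$ through the same exponential inequality.

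The decisive step, and the one I expect to be the main obstacle, is controlling the two stochastic convolutions $J_4^\epsilon$ and $J_5^\epsilon$ uniformly in $\epsilon$. Here I would apply the It\^o isometry in the orthonormal basis $\{e_k\}$, which reduces $\mathbb{E}\|J_4^\epsilon\|^2$ to the spectral sum $\sum_k \frac{1}{2\alpha_k}\big(1-e^{-2\alpha_k(t-s)/\epsilon}\big)$ and $\mathbb{E}\|J_5^\epsilon\|^2$ to a similar sum carrying the extra factor $(1-e^{-(t-s)\alpha_k/\epsilon})^2$. Applying $1-e^{-a}\le C_r a^r$ to each summand extracts the common factor $|t-s|^{2r}/\epsilon^{2r}$ and leaves the tail $\sum_k \alpha_k^{-(1-2r)}$. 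Since $\alpha_k\asymp k^2$ in one space dimension, this series converges precisely when $2(1-2r)>1$, i.e. when $r<\tfrac14$; this is exactly where the restriction $r\in(0,\tfrac14)$ in the statement originates, and verifying this summability is the crux of the argument. Collecting the five bounds through Minkowski's inequality, absorbing the exponential factors $e^{-\alpha_1\cdot/2}$ and the $T$-dependence into the constant, then yields the claimed estimate with a constant of the form $C_{r,T}(1+\|x\|+\|y\|)$.
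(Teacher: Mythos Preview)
Your proposal is correct and follows essentially the same route as the paper: the same five-term mild-solution decomposition $J_1^\epsilon,\dots,J_5^\epsilon$, the same use of \eqref{propo-2} for the initial-data increment (with the $\epsilon$-cancellation you note), Minkowski plus Lemma~\ref{moment-bound} and the inequality $1-e^{-a}\le C_r a^r$ for the drift pieces, and the It\^o-isometry spectral computation for the stochastic convolutions, including the observation that summability of $\sum_k\alpha_k^{-(1-2r)}$ is exactly what forces $r<\tfrac14$.
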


\begin{lemma}\label{A-X}
Assume that $x\in \mathscr{D}((-A)^\theta)$ for some $\theta\in (0,
1]$. Then, under  conditions (H.1) and (H.2), we have that
$X_t^\epsilon(x,y)\in \mathscr{D}(-A)$, $\mathbb{P}-a.s.,$ for any
$t>0$ and $\epsilon>0$. Moreover, for any $r\in (0, \frac{1}{4})$ it
holds that
\begin{eqnarray*}
(\mathbb{E}\|AX_t^\epsilon(x,y)\|^2)^\frac{1}{2}\leq C_{r,T}
t^{\theta-1}\|x\|_{(-A)^\theta}+C_{r,T}(1+\|x\|+\|y\|)
(1+\frac{1}{\epsilon^{r}}),\;t\in [0, T].
\end{eqnarray*}
%%%%%%     As a special case when $\theta=1$, we have
%%%%%%     \begin{eqnarray*}
%%%%%%     (\mathbb{E}\|AX_t^\epsilon(x,y)\|^2)^\frac{1}{2}\leq C_{r,
%%%%%%     T}(1+\|x\|_{(-A)}+\|x\|+\|y\|) (1+\frac{1}{\epsilon^{r}}),\;t\in [0,
%%%%%%     T].
%%%%%%     \end{eqnarray*}

\begin{proof}
For any $t\in [0, T]$ we write $X_t^\epsilon(x,y)$ as
\begin{eqnarray*}
X_t^\epsilon(x,y)&=&\big[S_tx+\int_0^tS_{t-s}F(X_t^\epsilon(x,y),Y_t^\epsilon(x,y))ds\big]\\
&+&\int_0^tS_{t-s}[F(X_s^\epsilon(x,y),Y_s^\epsilon(x,y))-F(X_t^\epsilon(x,y),Y_t^\epsilon(x,y))]ds\\
&+&\int_0^tS_{t-s}dW_s^1\\
&:=&X^{\epsilon,1}_{t}(x,y)+X^{\epsilon,2}_{t}(x,y)+X^{\epsilon,3}_{t}(x,y).
\end{eqnarray*}
For $X^{\epsilon,1}_{t}(x,y)$ we have
\begin{eqnarray*}
\|AX^{\epsilon,1}_{t}(x,y)\|&=&\|AS_tx\|+\|(S_t-I)F(X_t^\epsilon(x,y),Y_t^\epsilon(x,y))\|\\
&\leq&
Ct^{\theta-1}\|x\|_{(-A)^\theta}+C(1+\|X_t^\epsilon(x,y)\|+\|Y_t^\epsilon(x,y)\|),
\end{eqnarray*}
so that, thanks to Lemma \ref{moment-bound}, we obtain
\begin{eqnarray}
(\mathbb{E}\|AX^{\epsilon,1}_{t}(x,y)\|^2)^\frac{1}{2}&=&C_Tt^{\theta-1}\|x\|_{(-A)^\theta}+C_T(1+\|x\|+\|y\|).\label{A-X-1}
\end{eqnarray}
From \eqref{propo-1}, we have
\begin{eqnarray*}
&&\!\!\!\!\!\!\!\!\!\!\|AX^{\epsilon,2}_{t}(x,y)\|\\
&\leq& C\int_0^t\frac{e^{-\frac{\alpha_1}{2}(t-s)}
}{t-s}\|F(X_s^\epsilon(x,y),Y_s^\epsilon(x,y))-F(X_t^\epsilon(x,y),Y_t^\epsilon(x,y))\|ds\\
&\leq&C\int_0^t\frac{e^{-\frac{\alpha_1}{2}(t-s)} }{t-s}[
\|X_t^\epsilon(x,y)-X_s^\epsilon(x,y)\|+
\|Y_t^\epsilon(x,y),Y_s^\epsilon(x,y)\|]ds,
\end{eqnarray*}
which implies
\begin{eqnarray}
\mathbb{E}\|AX^{\epsilon,2}_{t}(x,y)\|^2&\leq&
C\Big[\int_0^t\frac{e^{-\frac{\alpha_1}{2}(t-s)} }{t-s}
(\mathbb{E}\|X_t^\epsilon(x,y)-X_s^\epsilon(x,y)\|^2)^{\frac{1}{2}}
ds\Big]^2\nonumber\\
&+&C\Big[\int_0^t\frac{e^{-\frac{\alpha_1}{2}(t-s)} }{t-s}
(\mathbb{E}\|Y_t^\epsilon(x,y)-Y_s^\epsilon(x,y)\|^2)^{\frac{1}{2}}
ds\Big]^2,\nonumber
\end{eqnarray}
by making use of   Minkowski inequality. If we take  $r\in (0,
\frac{1}{4})$ as in Lemma \ref{Xt-Xs}, we get
\begin{eqnarray}
&&\Big[\int_0^t\frac{e^{-\frac{\alpha_1}{2}(t-s)} }{t-s}
(\mathbb{E}\|X_t^\epsilon(x,y)-X_s^\epsilon(x,y)\|^2)^{\frac{1}{2}}
ds\Big]^2\nonumber\\
&&\leq C_{r,
T}(1+\|x\|+\|y\|)^2\Big[\int_0^{t}\frac{e^{-\frac{\alpha_1}{2}(t-s)}
}{(t-s)^rs^{1-r}}ds+\int_0^{t}\frac{e^{-\frac{\alpha_1}{2}(t-s)}
}{(t-s)^{\frac{1}{2}}
}ds\nonumber\\
&&\qquad+\int_0^{t}\frac{e^{-\frac{\alpha_1}{2}(t-s)} }{(t-s)^{r}
}ds+\int_0^{t}\frac{e^{-\frac{\alpha_1}{2}(t-s)}
}{(t-s)^{1-r }}ds\Big]^2\nonumber\\
&&\leq C_{r,
T}(1+\|x\|+\|y\|)^2\Big[\int_0^{t/2}\frac{e^{-\frac{\alpha_1}{2}(t-s)}
}{(t-s)^rs^{1-r}}ds+\int_{t/2}^t\frac{e^{-\frac{\alpha_1}{2}(t-s)}
}{(t-s)^rs^{1-r}}ds\nonumber\\
&&\qquad+\int_0^{+\infty}\frac{e^{-\frac{\alpha_1}{2}s}
}{s^{\frac{1}{2} }
}ds+\int_0^{+\infty}\frac{e^{-\frac{\alpha_1}{2}s} }{s^{r }
}ds+\int_0^{+\infty}\frac{e^{-\frac{\alpha_1}{2}s}
}{s^{1-r } }ds\Big]^2\nonumber\\
&&\leq C_{r, T}(1+\|x\|+\|y\|)^2\Big[1+\int_0^{t/2}\frac{1}
{(t-s)^rs^{1-r}}ds+\int_{t/2}^t\frac{1}
{(t-s)^rs^{1-r}}ds\Big]^2\nonumber\\
&&\leq C_{r,
T}(1+\|x\|+\|y\|)^2\Big[1+\left(\frac{t}{2}\right)^{-r}\int_0^{t/2}\frac{1}
{ s^{1-r}}ds+\left(\frac{t}{2}\right)^{-(1-r)}\int_{t/2}^t\frac{1}
{(t-s)^r}ds\Big]^2\nonumber\\
&&=C_{r, T}(1+\|x\|+\|y\|)^2(1+\frac{1}{r}+\frac{1}{1-r})\nonumber\\
&&\leq C_{r, T}(1+\|x\|+\|y\|)^2. \label{A-X-1-1.6}
\end{eqnarray}
By using a completely analogous way, due to Lemma \ref{Yt-Ys}, it is
possible to show that
\begin{eqnarray*}
&&\Big[\int_0^t\frac{e^{-\frac{\alpha_1}{2}(t-s)} }{t-s}
(\mathbb{E}\|Y_t^\epsilon(x,y)-Y_s^\epsilon(x,y)\|^2)^{\frac{1}{2}}
ds\Big]^2\\
&&\leq C_{r,
T}(1+\|x\|+\|y\|)^2\Big[\int_0^{t}\frac{e^{-\frac{\alpha_1}{2}(t-s)}
}{(t-s)^{1-r}s^{r}}ds+\frac{1}{\epsilon^{r}}\int_0^{t}\frac{e^{-\frac{\alpha_1}{2}(t-s)}
}{(t-s)^{1-r} }ds\Big]^2\\
&&\leq C_{r, T}(1+\|x\|+\|y\|)^2(1+\frac{1}{\epsilon^{2r}}),
\end{eqnarray*}
which, together with \eqref{A-X-1-1.6}, allows us to  get the
estimate
\begin{eqnarray}
\mathbb{E}\|AX^{\epsilon,2}_{t}(x,y)\|^2\leq C_{r,
T}(1+\|x\|+\|y\|)^2(1+\frac{1}{\epsilon^{2r}}).\label{A-X-1-2}
\end{eqnarray}
Thus, it remains to estimate $AX^{\epsilon,3}_{t}(x,y)$. By
straightforward computations and condition \eqref{Tr-AQ},  we get
\begin{eqnarray}
\mathbb{E}\|AX^{\epsilon,3}_{t}(x,y)\|^2
&=&\mathbb{E}\|\sum\limits_{k\in\mathbb{N}}\sqrt{\lambda_{1,k}}\alpha_ke_k\int_0^te^{-\alpha_k(t-s)}dB_{s,k}^{(1)}\|^2\nonumber\\
&=& \sum\limits_{k\in\mathbb{N}}\lambda_{1,k}
\alpha_k^2\int_0^te^{-2\alpha_{1,k}(t-s)}ds\nonumber\\
&\leq&C\sum\limits_{k\in\mathbb{N}}\lambda_{1,k} \alpha_k\nonumber\\
&\leq& C. \nonumber
\end{eqnarray}
This, together with \eqref{A-X-1} and \eqref{A-X-1-2}, yields
\begin{eqnarray*}
(\mathbb{E}\|AX_t^\epsilon(x,y)\|^2)^\frac{1}{2}\leq C_{r, T}
t^{\theta-1}\|x\|_{(-A)^\theta}+C_{r, T}(1+\|x\|+\|y\|)
(1+\frac{1}{\epsilon^{r}}).
\end{eqnarray*}

\end{proof}

\end{lemma}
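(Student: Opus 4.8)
The plan is to apply the variation-of-constants formula \eqref{equation-mild} and split $X_t^\epsilon(x,y)$ into three pieces by freezing the nonlinearity at the terminal time $t$:
\[
X_t^\epsilon(x,y)=X_t^{\epsilon,1}(x,y)+X_t^{\epsilon,2}(x,y)+X_t^{\epsilon,3}(x,y),
\]
where $X_t^{\epsilon,1}(x,y)=S_tx+\int_0^tS_{t-s}F(X_t^\epsilon,Y_t^\epsilon)\,ds$, $X_t^{\epsilon,2}(x,y)=\int_0^tS_{t-s}\big[F(X_s^\epsilon,Y_s^\epsilon)-F(X_t^\epsilon,Y_t^\epsilon)\big]\,ds$ and $X_t^{\epsilon,3}(x,y)=\sigma_1\int_0^tS_{t-s}\,dW_s^1$. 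Each summand will be shown to belong to $\mathscr{D}(A)$ almost surely for $t>0$: analyticity of $\{S_t\}$ gives $S_tx\in\mathscr{D}(A)$, the identity $A\int_0^tS_s\,ds=S_t-I$ handles the Bochner integral, and a spectral expansion together with It\^{o}'s isometry (using \eqref{Tr-AQ}) handles the stochastic convolution. This yields the asserted membership $X_t^\epsilon(x,y)\in\mathscr{D}(-A)$, and the quantitative bound then follows by estimating the three terms separately in $L^2(\Omega)$ and invoking the triangle inequality.

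For the first term I would use $AX_t^{\epsilon,1}(x,y)=AS_tx+(S_t-I)F(X_t^\epsilon,Y_t^\epsilon)$, bound $\|AS_tx\|=\|(-A)^{1-\theta}S_t(-A)^\theta x\|\le C_\theta\,t^{\theta-1}\|x\|_{(-A)^\theta}$ via \eqref{propo-1}, and control $\|(S_t-I)F(X_t^\epsilon,Y_t^\epsilon)\|$ by the linear growth of $F$ (from \eqref{F-condi-1}) and the moment bound Lemma \ref{moment-bound}; this gives $(\mathbb{E}\|AX_t^{\epsilon,1}(x,y)\|^2)^{1/2}\le C_{T}t^{\theta-1}\|x\|_{(-A)^\theta}+C_{T}(1+\|x\|+\|y\|)$. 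For the third term, expanding in the eigenbasis yields $AX_t^{\epsilon,3}(x,y)=\sigma_1\sum_{k}\sqrt{\lambda_{1,k}}\,\alpha_ke_k\int_0^te^{-\alpha_k(t-s)}\,dB_{s,k}^{(1)}$, so It\^{o}'s isometry gives $\mathbb{E}\|AX_t^{\epsilon,3}(x,y)\|^2=\sigma_1^2\sum_{k}\lambda_{1,k}\alpha_k^2\,\frac{1-e^{-2\alpha_kt}}{2\alpha_k}\le\tfrac12\sigma_1^2\sum_{k}\lambda_{1,k}\alpha_k$, which is finite precisely by the standing assumption $Tr((-A)Q_1)<\infty$ in \eqref{Tr-AQ}; this term carries no $\epsilon$-dependence.

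The substantive part is the second term. Here I would use $\|AS_{t-s}\|_{\mathcal{L}(H)}\le C(t-s)^{-1}e^{-\frac{\alpha_1}{2}(t-s)}$ (from \eqref{propo-1} with $\gamma=1$) and the Lipschitz estimate $\|F(X_s^\epsilon,Y_s^\epsilon)-F(X_t^\epsilon,Y_t^\epsilon)\|\le K_F(\|X_t^\epsilon-X_s^\epsilon\|+\|Y_t^\epsilon-Y_s^\epsilon\|)$, so that Minkowski's integral inequality reduces the task to bounding $\int_0^t\frac{e^{-\frac{\alpha_1}{2}(t-s)}}{t-s}\big(\mathbb{E}\|X_t^\epsilon-X_s^\epsilon\|^2\big)^{1/2}\,ds$ and the analogous $Y$-integral. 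Inserting the H\"{o}lder-type increment bounds of Lemma \ref{Xt-Xs} and Lemma \ref{Yt-Ys} with the same $r\in(0,\tfrac14)$, the non-integrable kernel $(t-s)^{-1}$ is absorbed by the increment factors $|t-s|^{1-r}$, $|t-s|^{1/2}$, $|t-s|^{r}$, leaving convergent Beta-type integrals, while the boundary singularities at $s=0$ (the weights $s^{-(1-r)}$ and $s^{-r}$) are dealt with by splitting $\int_0^t=\int_0^{t/2}+\int_{t/2}^t$; the $\epsilon^{-r}$ present in the $Y$-increment is exactly what produces the $\epsilon^{-r}$ in the final bound. This yields $\mathbb{E}\|AX_t^{\epsilon,2}(x,y)\|^2\le C_{r,T}(1+\|x\|+\|y\|)^2(1+\epsilon^{-2r})$, and summing the three contributions gives the conclusion. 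The main obstacle is this balancing act in the second term — reconciling the non-integrable smoothing kernel $(t-s)^{-1}$ with the merely H\"{o}lder-in-time regularity of the slow and fast components and with the time-singularity at the origin — and it is here that the constraint $r<\tfrac14$ is forced, inherited from Lemma \ref{Yt-Ys} where $\sum_{k}\alpha_k^{-(1-2r)}<\infty$ is needed.
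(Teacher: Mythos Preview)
Your proposal is correct and follows essentially the same approach as the paper: the same three-term decomposition obtained by freezing the nonlinearity at the terminal time, the same treatment of each piece (analytic smoothing plus $A\int_0^tS_s\,ds=S_t-I$ for the first, Minkowski plus the increment Lemmas \ref{Xt-Xs}--\ref{Yt-Ys} with the $[0,t/2]\cup[t/2,t]$ split for the second, and the spectral/It\^{o} computation with \eqref{Tr-AQ} for the third). Your identification of the second term as the substantive part and of the origin of the constraint $r<\tfrac14$ matches the paper's reasoning exactly.
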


\begin{lemma}\label{bar-Xt-Xs}
Let the conditions  (H.1) and (H.2) be satisfied and fix $x \in H$
and $T>0$. Then for any $r\in (0,\frac{1}{4})$ there exists a
constant $C_{r, T}>0$ such that for any   $0< s\leq t\leq T$, we
have
\begin{eqnarray*}
 (\mathbb{E}\| \bar{X}_t(x)-\bar{X}_s(x)\|^2)^\frac{1}{2} &\leq& C_{r, T}\frac{|t-s|^{1-r}} {s^{1-r}}\|x\|\\
&+&C_{r, T}(|t-s|^{\frac{1}{2} }+|t-s|^{1-r}+|t-s|^r)(1+\|x\|).
\end{eqnarray*}

\begin{proof}
It holds that
\begin{eqnarray}
\bar{X}_t(x)
-\bar{X}_s(x)&=&S_tx-S_sx+\int_s^tS_{t-\tau}\bar{F}(\bar{X}_\tau(x))d\tau\nonumber\\
&+&\int_0^s[S_{t-\tau}\bar{F}(\bar{X}_\tau(x))-S_{s-\tau}\bar{F}(\bar{X}_\tau(x))]d\tau\nonumber\\
&+&\int_s^tS_{t-\tau}dW^1_\tau+\int_0^s(S_{t-\tau}-S_{s-\tau})dW^1_\tau.\label{bar-Xt-Xs-0}
\end{eqnarray}
According to \eqref{propo-2}, we obtain
\begin{eqnarray*}
 \|(S_t-S_s)x\|\leq C_r\frac{|t-s|^{1-r}} {s^{1-r}}\|x\|.
 \end{eqnarray*}
For the second term on the right-hand side of \eqref{bar-Xt-Xs-0},
by using  \eqref{bar-x-moment}  we have
\begin{eqnarray*}
\mathbb{E}\|\int_s^tS_{t-\tau}\bar{F}(\bar{X}_\tau(x)
)d\tau\|^2&\leq&
|t-s|\int_s^t\mathbb{E}\|S_{t-\tau}\bar{F}(\bar{X}_\tau(x) )\|^2d\tau\\
&\leq&C|t-s|\int_s^t\mathbb{E}
(1+\|\bar{X}_\tau(x)\|^2 d\tau\\
&\leq&C_T|t-s|(1+\|x\|^2 ).
\end{eqnarray*}
Concerning the third term on the right-hand side of
\eqref{bar-Xt-Xs-0}, we deduce
\begin{eqnarray*}
&&\mathbb{E}\|\int_0^s(S_{t-\tau}-S_{s-\tau})\bar{F}(\bar{X}_\tau(x) )d\tau\|^2\\
&&\leq\mathbb{E}\left[\int_0^s\|(S_{t-\tau}-S_{s-\tau})\bar{F}(\bar{X}_\tau (x))\|d\tau\right]^2\\
&&\leq C_r\mathbb{E}\left[\int_0^s\frac{(t-s)^{1-r}}{(s-\tau)^{1-r}}
{e^{-\frac{\alpha_1}{2}(s-\tau)}}\|\bar{F}(\bar{X}_\tau(x) )\|d\tau\right]^2\\
&&\leq
C_r|t-s|^{2(1-r)}\left[\int_0^s\frac{e^{-\frac{\alpha_1}{2}(s-\tau)}}{(s-\tau)^{1-r}}
\big(\mathbb{E}\|\bar{F}(\bar{X}_\tau(x)
)\|^2\big)^{\frac{1}{2}}d\tau\right]^2,
\end{eqnarray*}
and then, by using once more \eqref{bar-x-moment}, this yields
\begin{eqnarray*}
&&\mathbb{E}\|\int_0^s(S_{t-\tau}-S_{s-\tau})\bar{F}(\bar{X}_\tau(x) )d\tau\|^2\\
&&\leq
C_r|t-s|^{2(1-r)}\left[\int_0^s\frac{e^{-\frac{\alpha_1}{2}(s-\tau)}}{(s-\tau)^{1-r}}
\mathbb{E}\big(1+\| \bar{X}_\tau (x)\|  \big)
d\tau\right]^2\\
&&\leq C_{r,T}|t-s|^{2(1-r)}(1+\|x\|^2).
\end{eqnarray*}
By using arguments analogous to those used in  Lemma \ref{Xt-Xs}, we
have
\begin{eqnarray*}
\mathbb{E}\|\int_s^tS_{t-\tau}dW^1_\tau\|^2 \leq Tr(Q_1)|t-s|
\end{eqnarray*}
and
\begin{eqnarray*}
\mathbb{E}\|\int_0^s(S_{t-\tau}-S_{s-\tau})dW^1_\tau\|^2\leq
C_{r,T}|t-s|^{2r}.
\end{eqnarray*}
Therefore, collecting all estimate of terms appearing on the
right-hand side of \eqref{bar-Xt-Xs-0}, we obtain
\begin{eqnarray*}
 (\mathbb{E}\| \bar{X}_t(x)-\bar{X}_s(x)\|^2)^\frac{1}{2} &\leq& C_{r, T}\frac{|t-s|^{1-r}} {s^{1-r}}\|x\|\\
&+&C_{r, T}(|t-s|^{\frac{1}{2} }+|t-s|^{1-r}+|t-s|^r)(1+\|x\|).
\end{eqnarray*}
\end{proof}
\end{lemma}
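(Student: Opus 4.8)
The plan is to repeat verbatim the argument used for Lemma \ref{Xt-Xs}, the point being that the averaged equation \eqref{Averaging-equation} has exactly the structure of the slow equation in \eqref{abstr-equation}, but with the fast component absent and with $F$ replaced by the averaged drift $\bar F$, which by \eqref{barF-lip} is Lipschitz continuous and therefore of linear growth. Starting from the mild formulation
\begin{eqnarray*}
\bar X_t(x)=S_tx+\int_0^tS_{t-\tau}\bar F(\bar X_\tau(x))\,d\tau+\sigma_1\int_0^tS_{t-\tau}\,dW^1_\tau,
\end{eqnarray*}
I would decompose $\bar X_t(x)-\bar X_s(x)$ into five pieces: the semigroup increment $(S_t-S_s)x$; the ``diagonal'' drift term $\int_s^tS_{t-\tau}\bar F(\bar X_\tau(x))\,d\tau$; the ``off-diagonal'' drift term $\int_0^s(S_{t-\tau}-S_{s-\tau})\bar F(\bar X_\tau(x))\,d\tau$; and the two stochastic convolutions $\int_s^tS_{t-\tau}\,dW^1_\tau$ and $\int_0^s(S_{t-\tau}-S_{s-\tau})\,dW^1_\tau$.

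For the semigroup increment I would apply \eqref{propo-2} directly to get the bound $C_r|t-s|^{1-r}s^{-(1-r)}\|x\|$, which is exactly the first term on the right-hand side of the claimed estimate. For the diagonal drift term, the contractivity of $\{S_t\}_{t\ge0}$, the linear growth of $\bar F$, H\"older's inequality in the time variable and the second-moment bound \eqref{bar-x-moment} give a second-moment bound of order $|t-s|(1+\|x\|^2)$. For the off-diagonal drift term I would insert \eqref{propo-2} under the integral, producing the weight $(t-s)^{1-r}(s-\tau)^{-(1-r)}e^{-\frac{\alpha_1}{2}(s-\tau)}$, and then use Minkowski's integral inequality together with \eqref{bar-x-moment} and the finiteness of $\int_0^{+\infty}u^{-(1-r)}e^{-\frac{\alpha_1}{2}u}\,du$ to obtain a second-moment bound of order $|t-s|^{2(1-r)}(1+\|x\|^2)$.

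The two stochastic convolutions are handled exactly as in Lemma \ref{Xt-Xs}. For the first, the It\^o isometry and $Tr(Q_1)<\infty$ from \eqref{Trace} give $\mathbb E\|\int_s^tS_{t-\tau}\,dW^1_\tau\|^2=\sum_k\lambda_{1,k}\int_s^t\|S_{t-\tau}e_k\|^2\,d\tau\le Tr(Q_1)|t-s|$. For the second, the It\^o isometry together with the identity $S_{t-\tau}-S_{s-\tau}=\int_{s-\tau}^{t-\tau}AS_\rho\,d\rho$ and the bound $\|AS_\rho\|_{\mathcal L(H)}\le C\rho^{-1}$ reduce matters to the elementary estimate $\int_0^s|\int_{s-\tau}^{t-\tau}\rho^{-1}\,d\rho|^2\,d\tau\le C_r|t-s|^{2r}T^{1-2r}$, valid for $r\in(0,\tfrac12)$, yielding the second-moment bound $C_{r,T}|t-s|^{2r}$. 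Collecting the five estimates and using $(\sum_i a_i)^{1/2}\le\sum_i a_i^{1/2}$ then produces the asserted inequality.

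I do not expect a genuine obstacle: the proof is essentially a transcription of Lemma \ref{Xt-Xs}, once one records that $\bar F$ inherits Lipschitz continuity and linear growth from \eqref{barF-lip} and that the only a priori moment bound required is \eqref{bar-x-moment}. The one mildly delicate point is the last stochastic-convolution estimate, because of the time singularity of $AS_\rho$ near $\rho=0$; but the manipulation needed there is identical to the one already carried out in Lemma \ref{Xt-Xs}, and — worth noting — it uses only $Tr(Q_1)<\infty$, not the stronger hypothesis \eqref{Tr-AQ}.
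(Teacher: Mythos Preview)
Your proposal is correct and follows essentially the same route as the paper: the same five-term decomposition of the mild solution increment, the same applications of \eqref{propo-2}, H\"older/Minkowski, the moment bound \eqref{bar-x-moment}, and the two stochastic-convolution estimates borrowed from Lemma \ref{Xt-Xs}. Your closing observation that only $Tr(Q_1)<\infty$ is used (not \eqref{Tr-AQ}) is accurate and matches what the paper's proof actually invokes.
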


\begin{lemma}\label{bar-X}
Assume that $x\in \mathscr{D}((-A)^\theta)$ for some $\theta\in (0,
1]$. Then, under  conditions (H.1)  and (H.2), we have that
$\bar{X}_t \in \mathscr{D}((-A))$, $\mathbb{P}-a.s.,$ for any $t\in
[0, T]$ and $\epsilon>0$. Moreover, it holds that
\begin{eqnarray*}
(\mathbb{E}\|A\bar{X}_t(x) \|^2)^\frac{1}{2}\leq
C_Tt^{\theta-1}\|x\|_{(-A)^\theta}+C_T(1+\|x\|),\;t\in[0, T].
\end{eqnarray*}
\begin{proof}
The proof is analogous to that of the previous Lemma \ref{A-X}. We
write
\begin{eqnarray}
\bar{X}_t (x)&=&\big[S_tx+\int_0^tS_{t-s}\bar{F}(\bar{X}_t(x) )ds\big]\nonumber\\
&&+\int_0^tS_{t-s}[\bar{F}(\bar{X}_s (x))-\bar{F}(\bar{X}_t^\epsilon(x) )]ds+\int_0^tS_{t-s}dW_s^1\nonumber\\
&:=&\bar{X}^{(1)}_{t}(x)+\bar{X}^{(2)}_{t}(x)+\bar{X}^{(3)}_{t}(x).\label{bar-X-0}
\end{eqnarray}
For $\bar{X}^{(1)}_{t}(x)$,  we have
\begin{eqnarray*}
\|A\bar{X}^{(1)}_{t}(x)\|&=&\|AS_tx\|+\|(S_t-I)\bar{F}(\bar{X}_t(x) )\|\\
&\leq& Ct^{\theta-1}\|x\|_{(-A)^\theta}+C(1+\|\bar{X}_t(x) \| ),
\end{eqnarray*}
and then,  thanks to \eqref{bar-x-moment}, we get
\begin{eqnarray*}
(\mathbb{E}\|A\bar{X}^{(1)}_{t}(x)\|^2)^\frac{1}{2}&=&C_Tt^{\theta-1}\|x\|_{(-A)^\theta}+C_T(1+\|x\|).
\end{eqnarray*}
Concerning $\bar{X}^{(2)}_{t}(x)$, we have
\begin{eqnarray*}
\|A\bar{X}^{(2)}_{t}(x)\|&\leq&
C\int_0^t\frac{e^{-\frac{\alpha_1}{2}(t-s)}
}{t-s}\|\bar{F}(\bar{X}_s(x))-\bar{F}(\bar{X}_t(x))\|ds\\
&\leq&C\int_0^t\frac{e^{-\frac{\alpha_1}{2}(t-s)} }{t-s}
\|\bar{X}_t(x)-\bar{X}_s(x)\|ds,
\end{eqnarray*}
and then, according to  Minkowski inequality and Lemma
\ref{bar-Xt-Xs}, for a fixed  $r_0\in (0, \frac{1}{4})$ we obtain
\begin{eqnarray*}
\mathbb{E}\|A\bar{X}^{(2)}_{t}(x)\|^2&\leq&
C\Big[\int_0^t\frac{e^{-\frac{\alpha_1}{2}(t-s)} }{t-s}
(\mathbb{E}\|\bar{X}_t(x) -\bar{X}_s(x) \|^2)^{\frac{1}{2}} ds\Big]^2\\
&&\leq C_{r_0, T}(1+\|x\|
)^2\Big[\int_0^{t}\frac{e^{-\frac{\alpha_1}{2}(t-s)}
}{(t-s)^{r_0}s^{1-r_0}}ds+\int_0^{t}\frac{e^{-\frac{\alpha_1}{2}(t-s)}
}{(t-s)^{r_0}
}ds\\
&& +\int_0^{t}\frac{e^{-\frac{\alpha_1}{2}(t-s)} }{(t-s)^{
\frac{1}{2}} }ds+ +\int_0^{t}\frac{e^{-\frac{\alpha_1}{2}(t-s)}
}{(t-s)^{ 1-r_0} }ds  \Big]^2\\
&\leq&C_{r_0, T}(1+\|x\| )^2.
\end{eqnarray*}
On the other hand, as shown in the Lemma \ref{A-X}, we have
\begin{eqnarray*}
\mathbb{E}\|A\bar{X}^{(3)}_{t}(x)\|^2\leq C.
\end{eqnarray*}
Therefore, collecting all estimates of terms appearing on the
right-hand side of \eqref{bar-X-0}, we can conclude the proof.
\end{proof}
\end{lemma}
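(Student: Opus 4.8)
The plan is to reproduce, in the simpler setting without a fast component, the decomposition used in Lemma~\ref{A-X}; since the averaged equation \eqref{Averaging-equation} involves no $Y$-variable, the factor $\epsilon^{-r}$ that appeared there is absent and the bound will be uniform in $\epsilon$ (the ``$\epsilon>0$'' in the statement being vestigial). Starting from the mild formula $\bar X_t(x)=S_tx+\int_0^tS_{t-s}\bar F(\bar X_s(x))\,ds+\sigma_1\int_0^tS_{t-s}\,dW^1_s$, I would write $\bar X_t(x)=\bar X^{(1)}_t(x)+\bar X^{(2)}_t(x)+\bar X^{(3)}_t(x)$ with $\bar X^{(1)}_t(x):=S_tx+\int_0^tS_{t-s}\bar F(\bar X_t(x))\,ds$ (the drift frozen at the terminal time $t$), $\bar X^{(2)}_t(x):=\int_0^tS_{t-s}\bigl[\bar F(\bar X_s(x))-\bar F(\bar X_t(x))\bigr]\,ds$ (the drift increment), and $\bar X^{(3)}_t(x):=\sigma_1\int_0^tS_{t-s}\,dW^1_s$ (the stochastic convolution). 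It then suffices to bound each $\mathbb E\|A\bar X^{(i)}_t(x)\|^2$, since finiteness of these quantities gives $\bar X_t\in\mathscr{D}(A)$ $\mathbb P$-a.s., and Minkowski's inequality in $L^2(\Omega)$ assembles the three estimates into the claim.

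The first and third pieces are routine. For $\bar X^{(1)}_t(x)$, using $\int_0^tS_{t-s}\,ds=(-A)^{-1}(I-S_t)$ one gets $A\bar X^{(1)}_t(x)=AS_tx+(S_t-I)\bar F(\bar X_t(x))$; the first summand is controlled via \eqref{propo-1} by $\|AS_tx\|=\|(-A)^{1-\theta}S_t(-A)^\theta x\|\le C\,t^{\theta-1}\|x\|_{(-A)^\theta}$, and the second by $2\|\bar F(\bar X_t(x))\|\le C(1+\|\bar X_t(x)\|)$ using the linear growth that follows from \eqref{barF-lip}; taking $L^2(\Omega)$-norms and invoking the moment bound \eqref{bar-x-moment} yields $(\mathbb E\|A\bar X^{(1)}_t\|^2)^{1/2}\le C_Tt^{\theta-1}\|x\|_{(-A)^\theta}+C_T(1+\|x\|)$. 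For $\bar X^{(3)}_t(x)$, expanding in the eigenbasis and using $AS_\rho e_k=-\alpha_ke^{-\alpha_k\rho}e_k$ together with It\^{o}'s isometry gives $\mathbb E\|A\bar X^{(3)}_t\|^2=\sigma_1^2\sum_k\lambda_{1,k}\alpha_k^2\int_0^te^{-2\alpha_k(t-s)}\,ds\le\frac{\sigma_1^2}{2}\sum_k\lambda_{1,k}\alpha_k=\frac{\sigma_1^2}{2}Tr\bigl((-A)Q_1\bigr)$, finite by assumption \eqref{Tr-AQ}.

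The heart of the matter is $\bar X^{(2)}_t(x)$, exactly as for the analogous term in Lemma~\ref{A-X}. Using the smoothing estimate $\|AS_\rho\|_{\mathcal L(H)}\le C\rho^{-1}e^{-\frac{\alpha_1}{2}\rho}$ (the case $\gamma=1$ of \eqref{propo-1}) and the Lipschitz bound \eqref{barF-lip}, one has $\|A\bar X^{(2)}_t(x)\|\le C\int_0^t\frac{e^{-\frac{\alpha_1}{2}(t-s)}}{t-s}\|\bar X_t(x)-\bar X_s(x)\|\,ds$. I would then apply Minkowski's integral inequality in $L^2(\Omega)$ and insert the time-regularity estimate of Lemma~\ref{bar-Xt-Xs} (with some fixed $r\in(0,\frac{1}{4})$), reducing everything to the uniform boundedness on $[0,T]$ of integrals of the form $\int_0^t\frac{e^{-\frac{\alpha_1}{2}(t-s)}}{(t-s)^{a}s^{b}}\,ds$ for $(a,b)\in\{(r,1-r),(\frac{1}{2},0),(1-r,0),(r,0)\}$; these are handled by splitting the $(r,1-r)$ integral at $t/2$ and bounding $(t-s)^{-r}$, resp.\ $s^{-(1-r)}$, on the two halves, the remaining ones being of Beta type, all convergent because $r<\frac{1}{4}$. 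This gives $\mathbb E\|A\bar X^{(2)}_t\|^2\le C_{r,T}(1+\|x\|)^2$. The main obstacle is precisely this step: one must simultaneously absorb the non-integrable $1/(t-s)$ singularity of the kernel (which forces the use of H\"{o}lder-in-time control of $\bar X$) and the auxiliary $s^{-(1-r)}$ singularity at $s=0$ carried by that control, while keeping all constants independent of $\epsilon$. Once it is done, Minkowski combines the three pieces and the lemma follows.
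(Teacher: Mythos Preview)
Your proposal is correct and follows essentially the same route as the paper: the same three-term decomposition $\bar X_t=\bar X^{(1)}_t+\bar X^{(2)}_t+\bar X^{(3)}_t$ with the drift frozen at the terminal time, the same use of \eqref{propo-1}, \eqref{barF-lip}, \eqref{bar-x-moment}, Lemma~\ref{bar-Xt-Xs} and \eqref{Tr-AQ} on the respective pieces, and the same splitting at $t/2$ for the Beta-type integral. Your observation that the ``$\epsilon>0$'' is vestigial is also in line with the paper, whose bound is indeed independent of $\epsilon$.
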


\section{Asymptotic expansions} \label{asym}

One of the main tools that we are using in order to prove the main
result  is   It\^{o}'s formula. On the other hand, here the operator
$A$ is unbounded, and then we can not apply directly  It\^{o}'s
formula. Therefore we have to proceed by Galerkin approximation
procedure, to this purpose we need to introduce some notations. For
arbitrary $n\in \mathbb{N}$, let $H^{(n)}$ denote the finite
dimensional subspace of $H$, generated by the set of eigenvectors
$\{e_1,e_2,\cdots, e_n\}$. Let $P_n : H \rightarrow H^{(n)}$ denote
the orthogonal projection defined by
\begin{eqnarray*}
P_nh=\sum\limits_{k=1}^n\Big(h, e_k\Big)_H  e_k,\; h\in H.
\end{eqnarray*}
We define $A_n:H^{(n)}\rightarrow H^{(n)}$ by
\begin{eqnarray*}
A_nh=AP_nh=P_nAh=\sum\limits_{k=1}^n(-\alpha_k)\Big(h,
e_k\Big)_He_k,\;h\in H^{(n)},
\end{eqnarray*}
which is the generator of a strongly semigroup $\{S_{t,n}\}_{t\geq
0}$ on $H^{(n)}$ taking the form
\begin{eqnarray*}
S_{t,n}h=\sum\limits_{k=1}^n e^{-\alpha_kt}\Big(e_k, h\Big)_He_k.
\end{eqnarray*}
Similarly, for arbitrary $n\in \mathbb{N}$ and $\gamma\in
\mathbb{R}$, one can define the $(-A_n)^\gamma:H^{(n)}\rightarrow
H^{(n)}$ as
$$(-A_n)^\gamma h:=\sum\limits_{k=1}^n\alpha_k^{\gamma}\Big(e_k,
h\Big)_He_k,\;h\in H^{(n)}.$$ For each $n$ we consider the
approximating problem of \eqref{abstr-equation}:
\begin{eqnarray}
&&dX_t^{\epsilon,n}=A_nX^{\epsilon,n}_tdt+F_n(X^{\epsilon,n}_t,Y^{\epsilon,n}_t)dt+\sigma_1P_ndW_t^1,\label{abstr-slow-equation-finite}\\
&&dY_t^{\epsilon,n}=\frac{1}{\epsilon}A_nY^{\epsilon,n}_tdt+
\frac{1}{\epsilon}G_n(X^{\epsilon,n}_t,Y^{\epsilon,n}_t)dt+\frac{\sigma_2}{\sqrt{\epsilon}}P_ndW_t^2,\label{abstr-fast-equation-finite}
\end{eqnarray}
with initial conditions $\;X_0^{\epsilon,n}:=x^{(n)}=P_nx,
Y_0^{\epsilon,n}:=y^{(n)}=P_ny$, where $F_n$ and $G_n$ are
respectively defined by
\begin{eqnarray*}
&&F_n(u,v)=P_nF(u,v),\;u,v \in H^{(n)},\\
&&G_n(u,v)=P_nG(u,v),\;\;u,v \in H^{(n)}.
\end{eqnarray*}
Such a problem   is the finite dimensional problem with Lipschitz
coefficients. Under the assumption (H.1) and (H.2), it is easy to
show that the problem
\eqref{abstr-slow-equation-finite}-\eqref{abstr-fast-equation-finite}
admits a unique  \emph{strong  solution} taking values in
$H^{(n)}\times H^{(n)}$, which is denoted by
$(X_t^{\epsilon,n}(x^{(n)},y^{(n)}),Y_t^{\epsilon,n}(x^{(n)},y^{(n)}))$.
Moreover, for any fixed $\epsilon>0$ and $x, y\in H$ it holds that
\begin{eqnarray}
\lim\limits_{n\rightarrow
+\infty}\mathbb{E}\big(\|X_t^{\epsilon}(x,y)-X_t^{\epsilon,n}(x^{(n)},y^{(n)})\|^2
\big)=0\label{approx-infinite-1}
\end{eqnarray}
and
\begin{eqnarray*}
\lim\limits_{n\rightarrow +\infty}\mathbb{E}\big(
\|Y_t^{\epsilon}(x,y)-Y_t^{\epsilon,n}(x^{(n)},y^{(n)})\|^2\big)=0,\label{approx-infinite-2}
\end{eqnarray*}
 For any fixed $x\in H$, we consider frozen  problem associate
with equation \eqref{abstr-fast-equation-finite} in form
 \begin{eqnarray}
dY^{x,n}_t=A_nY^{x,n}_tdt+G_n(P_nx,Y^{x,n}_t)dt+\sigma_2P_ndW^2_t,\;Y^{x,n}_0=y^{(n)}.
\label{frozen-finite}
\end{eqnarray}
Under (H.1) and (H.2), it is easy to check that such a problem
admits a unique strong solution denoted by $Y^{x,n}_t(y^{(n)})$,
which has a unique invariant measure $\mu^{x,n}$ on finite
dimensional space $H^{(n)}$. The averaged equation for finite
dimensional approximation problem \eqref{abstr-slow-equation-finite}
can be defined as follows:
\begin{eqnarray}
d\bar{X}^n_t(x^{(n)})=A_n\bar{X}^n_t(x^{(n)})dt+\bar{F}_n(\bar{X}^n_t(x^{(n)})dt+\sigma_1P_ndW^1_t,\;\bar{X}^n_0=x^{(n)},
\label{aver-finite-equation}
\end{eqnarray}
with
\begin{eqnarray*}
\bar{F}_n(u)=\int_{H^{(n)}}F_n(u,v)\mu^{x,n}(dv),\;u\in H^{(n)}.
\end{eqnarray*}
The averaging principle guarantees
\begin{eqnarray}
\lim\limits_{\epsilon\rightarrow0+}\left\{\mathbb{E}\|{X}^{\epsilon,n}_t(x^{(n)},y^{(n)})
-\bar{X}^n_t(x^{(n)})\|^2\right\}^\frac{1}{2}=0,\label{aver-finite}
\end{eqnarray}
and the above limit is uniform with respect to $n\in \mathbb{N}$. By
triangle inequality we obtain
\begin{eqnarray*}
\mathbb{E}\|\bar{X}_t(x)-\bar{X}^n_t(x^{(n)})\|&\leq&
\mathbb{E}\|\bar{X}_t(x)-{X}^{\epsilon
}_t(x,y)\|\\
&+&\mathbb{E}\|{X}^{\epsilon}_t(x,y)-{X}^{\epsilon,n}_t(x^{(n)},y^{(n)})\|\\
&+&\mathbb{E}\|{X}^{\epsilon,n}_t(x^{(n)},y^{(n)})-\bar{X}^n_t(x^{(n)})\|,
\end{eqnarray*}
which, together with \eqref{aver-infinite} and \eqref{aver-finite},
yields
\begin{eqnarray}
\lim\limits_{n\rightarrow\infty}\mathbb{E}\|\bar{X}_t(x)-\bar{X}^n_t(x^{(n)})\|=0.\label{approx-infinite-averaging}
\end{eqnarray}

\begin{remark}
Note that for any $T>0$  and $\phi\in C_b^3(H, \mathbb{R})$ we have
\begin{eqnarray*}
\left|\mathbb{E}\phi(X^\epsilon_T(x,y))-\mathbb{E}\phi(\bar{X}_T(x))\right|
&\leq&\left|\mathbb{E}\phi(X^\epsilon_T(x,y))-\mathbb{E}\phi(X^{\epsilon,n}_T(x^{(n)},y^{(n)} ))\right|\\
&+&\left|\mathbb{E}\phi(X^{\epsilon,n}_T(x^{(n)},y^{(n)}))-\mathbb{E}\phi(\bar{X}^{n}_T(x^{(n)}
))\right|\\
&+&\left|\mathbb{E}\phi(\bar{X}^{n}_T(x^{(n)}
))-\mathbb{E}\phi(\bar{X}_T(x))\right|.
\end{eqnarray*}
According to the approximation results \eqref{approx-infinite-1} and
\eqref{approx-infinite-averaging} the first and last terms above
converge to zero as $n$ goes to infinity. In order to prove Theorem
\ref{theorem} we have only to show that for any  $r\in (0, 1)$, it
holds
\begin{eqnarray}
\left|\mathbb{E}\phi(X^{\epsilon,n}_T(x^{(n)},y^{(n)}))-\mathbb{E}\phi(\bar{X}^{n}_T(x^{(n)}))\right|\leq
C_{\theta, r, T,\phi,x,y}\epsilon^{1-r} \label{finite-weak}
\end{eqnarray}
for some constant $C_{\theta, r, T,\phi,x,y}$ independent of the
dimension index $n$.
\end{remark}
\begin{remark}
For all $n\in\mathbb{N}$, the regular conditions  on drift
coefficients $F$ and $G$ presented in (H.1) and (H.2) are still
valid for $F_n$ and $G_n$, respectively, but replacing $H$  by
$H^{(n)}$. In particular, the boundedness on derivatives associated
with  $F_n$ and $G_n$ are uniform with respect to dimension $n$. As
a result, all properties satisfied by $(X_t^{\epsilon },
Y_t^{\epsilon })$,
 $Y_t^x$ and $P_t^x$ are still valid for $(X_t^{\epsilon,n},
Y_t^{\epsilon,n})$, $Y_t^{x,n}$  and for the transition semigroup
$P_t^{x,n}$ corresponding to \eqref{frozen-finite}, respectively.
Moreover, all estimates for $(X_t^{\epsilon,n}, Y_t^{\epsilon,n})$,
$Y_t^{x,n}$ and $P_t^{x,n}$ are uniform with respect to $n\in
\mathbb{N}$. Similarly, $\bar{F}_n$ and $\bar{X}_t^{n}$ inherit all
properties described for $\bar{F}$ and $\bar{X}_t$, respectively,
with all estimates uniform with respect to $n\in \mathbb{N}$.
\end{remark}

\begin{remark}
{In what follows}, the letter $C$ below with or without subscripts
will denote generic positive constants independent of $\epsilon$ and
dimension $n$,  whose value may  change from one line to another.
\end{remark}
%%%%%%%%%%%%%%%%%%%%%%%%%%%%%%%%%%%%%%%%%%%%%%%%%%%%%%%%%%%%%%%%%%%%%%%%%%%%%%%%%%%%%%%%%%%%%%%
%%%%%%%%%%%%%%%%%%%%%%%%%%%%%%%%%%%%%%%%%%%%%%%%%%%%%%%%%%%%%%%%%%%%%%%%%%%%%%%%%%%%%%%%%%%%%%%
%%%%%     \begin{remark}
%%%%%     From mow on, we are confined to finite dimensional situation and
%%%%%     omit to write the dependence of the dimension $n$ for $H^{(n)}, F_n,
%%%%%     G_n,X_t^{\epsilon, n} $, $\bar{X}_t^{n}$ and $\mu^{x,n}$ in order to
%%%%%     lighten the notation. We  use also the analogous simplicity for
%%%%%     $n$-dependent operators $Q^n_i=P_nQ_i,\;i=1,2.$ Of course, all
%%%%%     conditions and preliminary results given in the previous sections
%%%%%     are still valid for these brevity notations .
%%%%%     \end{remark}

%%%%%     \begin{remark}
%%%%%     \textcolor[rgb]{1.00,0.00,0.00}{From mow on}, \textbf{we are
%%%%%     confined to finite dimensional situation. We shall omit the
%%%%%     superscript} $n$ of  processes $X^{\epsilon,n}_t(x^{(n)},y^{(n)})$,
%%%%%     $Y^{\epsilon,n}_t(x^{(n)},y^{(n)})$ and $\bar{X}^{n}_T(x^{(n)})$,
%%%%%     and we shall denote them by $X^{\epsilon}_t(x,y)$,
%%%%%     $Y^{\epsilon}_t(x,y)$ and $\bar{X}_t(x)$, respectively.
%%%%%     \end{remark}
%%%%%%%%%%%%%%%%%%%%%%%%%%%%%%%%%%%%%%%%%%%%%%%%%%%%%%%%%%%%%%%%%%%%%%%%%%%%%%%%%%%%%%%%%%%%%%%
%%%%%%%%%%%%%%%%%%%%%%%%%%%%%%%%%%%%%%%%%%%%%%%%%%%%%%%%%%%%%%%%%%%%%%%%%%%%%%%%%%%%%%%%%%%%%%%
Let $\phi$ be the test function as in Theorem \ref{theorem}. As
usual, we use the notation  $(X_t^{\epsilon,n}(x,y),
Y_t^{\epsilon,n}(x,y))$ to denote the solution to equation
\eqref{abstr-slow-equation-finite}-\eqref{abstr-fast-equation-finite}
with initial value $(X_0^{\epsilon,n}(x,y),
Y_0^{\epsilon,n}(x,y))=(x,y)\in H^{(n)}\times H^{(n)}$. For any
$n\in\mathbb{N}$, we define a function    $u_n^\epsilon: [0,
T]\times H^{(n)} \times H^{(n)}\rightarrow \mathbb{R}$ by
$$u_n^\epsilon(t, x,y)=\mathbb{E}\phi(X_t^{\epsilon,n}(x,y)).$$

 We now introduce  two differential operators associated with the
 fast variable system \eqref{abstr-fast-equation-finite} and slow variable system \eqref{abstr-slow-equation-finite}
 in finite dimensional space, respectively:
\begin{eqnarray*}
\mathcal {L}_1^n\varphi(y)&=&\Big(A_ny+G_n(x,y),
D_y\varphi(y)\Big)_H\\
&&+\frac{1}{2}\sigma_2^2Tr(D^2_{yy}\varphi(y)Q_{2,n}^{\frac{1}{2}}(Q_{2,n}^{\frac{1}{2}})^*),\;
\varphi \in C_b^2(H^{(n)},\mathbb{R})
\end{eqnarray*}
and
\begin{eqnarray*}
\mathcal {L}_2^n\varphi(x)&=&\Big(A_nx+ F_n(x,y),
D_x\varphi(x)\Big)_{ {H}
}\\
&&+\frac{1}{2}\sigma_1^2Tr(D^2_{xx}\varphi(x)Q_{1,n}^{\frac{1}{2}}(Q_{1,n}^{\frac{1}{2}})^*),\;
\varphi \in C_b^2(H^{(n)},\mathbb{R} ),
\end{eqnarray*}
where $Q_{1,n}:=Q_1P_n$ and  $Q_{2,n}:=Q_2P_n$ for any
$n\in\mathbb{N}$. It is known that $u_n^\epsilon$ is a solution to
the forward Kolmogorov equation:
\begin{eqnarray}\label{Kolm}
\begin{cases}
\frac{\partial}{\partial t}u_n^\epsilon(t, x, y)=\mathcal {L}^{\epsilon, n} u_n^\epsilon(t, x, y),\\
u_n^\epsilon(0, x,y)=\phi(x),
\end{cases}
\end{eqnarray}
where $\mathcal {L}^{\epsilon, n}:=\frac{1}{\epsilon}\mathcal
{L}_1^n+\mathcal {L}_2^n.$

 Also recall the Kolmogorov operator for the averaged system \eqref{aver-finite-equation} is
defined as
\begin{eqnarray*}
\bar{\mathcal {L}^n}\varphi(x)&=&\Big(A_nx+\bar{F}_n(x),
D_x\varphi(x)\Big)_{ {H}
}\\
&&+\frac{1}{2}\sigma_1^2Tr(D^2_{xx}\varphi(x)Q_{1,n}^{\frac{1}{2}}(Q_{1,n}^{\frac{1}{2}})^*),\;
\varphi\in C_b^2(H^{(n)}, \mathbb{R} ).
\end{eqnarray*}
If we set $$\bar{u}_n(t, x)=\mathbb{E}\phi(\bar{X}_t^n(x)),$$ we
have
\begin{eqnarray}\label{Kolm-Aver}
\begin{cases}
\frac{\partial }{\partial t}\bar{u}_n(t, x)=\bar{\mathcal {L}^n} \bar{u}_n(t, x),\\
\bar{u}_n(0, x)=\phi(x).
\end{cases}
\end{eqnarray}
Then the weak difference at time $T$ can be rewritten as
\begin{eqnarray*}
\mathbb{E}\phi({X}^{\epsilon,n}_T)-\mathbb{E}\phi(\bar{X}^n_T)=u_n^\epsilon(T,
x,y)-\bar{u}_n(T,x).
\end{eqnarray*}
Henceforth, for the sake of brevity, we will omit to write the
dependence of the temporal variable $t$ and spatial variables $x$
and $y$ in some occasion. For example,  we often write $u^\epsilon$
instead of $u_n^\epsilon(t, x, y)$. Our aim is to seek an expansion
for   $u_n^\epsilon(T, x,y)$ with the form

\begin{eqnarray}\label{asymp-expan}
u_n^\epsilon=u_{0,n}+\epsilon u_{1,n}+r^\epsilon_n,
\end{eqnarray}
where  $u_{0,n}$ and $u_{1,n}$ are smooth functions which will be
constructed below, and $r^\epsilon_n$ is the remainder term.

The rest of this section is devoted to the proof of Theorem
\ref{theorem}. We will proceed in several steps, which have been
structured as subsections.

\subsection{\textbf{The leading term}}
Let us first   determine the leading term.  Now, substituting
expansions \eqref{asymp-expan} into \eqref{Kolm} yields
\begin{eqnarray*}
\frac{\partial u_{0,n}}{\partial t}+\epsilon\frac{\partial
u_{1,n}}{\partial t}+\frac{\partial r^\epsilon_n}{\partial t}&=&
\frac{1}{\epsilon}\mathcal {L}_1^nu_{0,n}+\mathcal {\mathcal
{L}}_1^nu_{1,n}+\frac{1}{\epsilon}\mathcal {L}_1^nr^\epsilon_n\\
&&+\mathcal {L}_2^nu_{0,n}+\epsilon \mathcal {L}_2^nu_{1,n}+\mathcal
{L}_2^nr^\epsilon_n.
\end{eqnarray*}
By comparing orders  of $\epsilon$, we obtain
\begin{eqnarray}
&&\mathcal {L}_1^nu_{0,n}=0  \label{u-o-equ-1}
\end{eqnarray}
and
\begin{eqnarray}
&&\frac{\partial u_{0,n}}{\partial t}=\mathcal
{L}_1^nu_{1,n}+\mathcal {L}_2^nu_{0,n}.\label{u-0-equ-2}
\end{eqnarray}
 It follows from \eqref{u-o-equ-1} that $u_{0,n}$ is independent of
$y$, which means $$u_{0,n}(t,x, y)=u_{0,n}(t,x).$$ We also impose
the initial condition $u_{0,n}(0,x)=\phi(x).$ Since $\mu^{x,n}$ is
the invariant measure of a Markov process with generator $\mathcal
{L}_1^n$, we have
\begin{eqnarray*}
\int_{H^{(n)}}\mathcal {L}_1^nu_{1,n}(t,x,y)\mu^{x,n}(dy)=0,
\end{eqnarray*}
which, by invoking \eqref{u-0-equ-2}, implies
\begin{eqnarray*}
\frac{\partial u_{0,n}}{\partial t}(t,x)&=&\int_{H^{(n)}}\frac{\partial u_{0,n}}{\partial t}(t,x)\mu^{x,n}(dy)\\
&=&\int_{H^{(n)}}\mathcal {L}_2^nu_{0,n}(t,x)\mu^{x,n}(dy)\\
&=&\left(A_nu_{0,n}(t,x)+\int_{H^{(n)}} F_n(x,y)\mu^{x,n}(dy),
D_xu_{0,n}(t,x)\right)_{ {H}}\nonumber\\
&&+\frac{1}{2}\sigma_1^2Tr(D^2_{xx}u_{0,n}(t,x)Q_{1,n}^{\frac{1}{2}}(Q_{1,n}^{\frac{1}{2}})^*)\\
&=&\bar{\mathcal {L}^n}u_{0,n}(t,x),
\end{eqnarray*}
so that $u_{0,n}$ and $\bar{u}_n$ satisfy the same evolution
equation. By using   a uniqueness argument, such $u_{0,n}$ has to
coincide with the solution $\bar{u}_n$  and we have the following
lemma:
\begin{lemma}\label{u-0}
Assume (H.1) and (H.2). Then for any $x, y \in H^{(n)}$  and $T>0$,
we have $u_{0,n}(T,x,y)=\bar{u}_n(T,x)$.
\end{lemma}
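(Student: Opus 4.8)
The statement is essentially a consequence of the computation carried out immediately above it, so the plan is to package that computation into a genuine uniqueness argument. First I recall that \eqref{u-o-equ-1} forces $u_{0,n}$ to be independent of $y$, and that averaging \eqref{u-0-equ-2} against the invariant measure $\mu^{x,n}$ — using $\int_{H^{(n)}}\mathcal{L}_1^n u_{1,n}(t,x,y)\,\mu^{x,n}(dy)=0$ — shows that $u_{0,n}=u_{0,n}(t,x)$ solves the averaged Kolmogorov equation $\partial_t u_{0,n}=\bar{\mathcal{L}^n}u_{0,n}$ with $u_{0,n}(0,\cdot)=\phi$, i.e.\ precisely problem \eqref{Kolm-Aver} satisfied by $\bar{u}_n$. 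It then remains only to argue that \eqref{Kolm-Aver} has a single solution in the class to which both $u_{0,n}$ and $\bar{u}_n$ belong.

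Since everything takes place on the finite-dimensional space $H^{(n)}$, I would establish this by a probabilistic representation. Fix $T>0$ and consider $M_t:=u_{0,n}(T-t,\bar{X}^n_t(x))$ for $t\in[0,T]$, where $\bar{X}^n_t(x)$ solves \eqref{aver-finite-equation}. Applying It\^o's formula — legitimate here because $A_n$ is bounded on $H^{(n)}$, $\bar{F}_n$ is Lipschitz by \eqref{barF-lip}, $Q_{1,n}$ has finite trace by \eqref{Trace}, and $u_{0,n}$ is smooth — the drift of $M_t$ equals $\big(-\partial_t u_{0,n}+\bar{\mathcal{L}^n}u_{0,n}\big)(T-t,\bar{X}^n_t(x))=0$, so $M_t$ is a local martingale, and a true martingale by boundedness of $\phi$. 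Equating expectations at $t=0$ and $t=T$ gives $u_{0,n}(T,x)=\mathbb{E}M_0=\mathbb{E}M_T=\mathbb{E}\,u_{0,n}(0,\bar{X}^n_T(x))=\mathbb{E}\phi(\bar{X}^n_T(x))=\bar{u}_n(T,x)$, which is the claim; arbitrariness of $T$ yields the identity at every time. Alternatively one may bypass the representation and argue directly on \eqref{Kolm-Aver}: $w:=u_{0,n}-\bar{u}_n$ solves the same linear equation with zero initial datum, and a Gronwall estimate on $\sup_{x}|w(t,x)|$, using the Lipschitz and trace-class bounds on the coefficients, forces $w\equiv0$.

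The one genuine point to nail down — and the step I expect to be the main obstacle — is the regularity of $u_{0,n}$ required to run the above, namely $u_{0,n}\in C^{1,2}([0,T]\times H^{(n)})$ together with enough control on the growth of its spatial derivatives. This does not come for free from the formal matching of powers of $\epsilon$; one has to justify that the expansion coefficients may be taken smooth, e.g.\ by \emph{defining} $u_{0,n}$ as the solution of \eqref{Kolm-Aver} obtained from the averaged flow and checking a posteriori that it solves \eqref{u-o-equ-1}--\eqref{u-0-equ-2}, or by invoking the smoothing properties of $\bar{\mathcal{L}^n}$ in finite dimensions together with $\phi\in C_b^3(H,\mathbb{R})$. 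Once this regularity is secured the rest is routine, and the same mechanism — identifying an expansion coefficient via the Kolmogorov equation it solves and then applying It\^o's formula along the appropriate flow — will be reused when treating $u_{1,n}$ and the remainder $r^\epsilon_n$ in the subsequent subsections.
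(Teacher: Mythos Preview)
Your proposal is correct and follows essentially the same route as the paper: derive from \eqref{u-o-equ-1} that $u_{0,n}$ is independent of $y$, average \eqref{u-0-equ-2} against $\mu^{x,n}$ to obtain $\partial_t u_{0,n}=\bar{\mathcal{L}}^n u_{0,n}$, and then invoke uniqueness for \eqref{Kolm-Aver}. The paper is terser --- it simply says ``by using a uniqueness argument'' without spelling out the It\^o/martingale representation you give --- and it does not address the regularity issue you flag; in practice the expansion is made rigorous precisely by \emph{defining} $u_{0,n}:=\bar{u}_n$ a priori and verifying \eqref{u-o-equ-1}--\eqref{u-0-equ-2} a posteriori, which is the alternative you already mention.
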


\subsection{\textbf{Construction of} $ {u_{1,n}}$}
Let us proceed to carry out the construction of $u_{1,n}$. Thanks to
Lemma \ref{u-0} and \eqref{Kolm-Aver}, the equation
\eqref{u-0-equ-2} can be rewritten as
\begin{eqnarray*}
\bar{\mathcal
{L}}^n\bar{u}_n=\mathcal{L}^n_1u_{1,n}+\mathcal{L}^n_2\bar{u}_n,
\end{eqnarray*}
and hence we get an elliptic equation for $u_{1,n}$ with form
\begin{eqnarray*}
\mathcal {L}_1^nu_{1,n}(t,x,y)=\Big(\bar{ F}_n(x)-F_n(x,y),
D_x\bar{u}_n(t,x)\Big)_{{H} }:=-\rho_n(t,x,y),
\end{eqnarray*}
where $\rho_n$ is of class $\mathcal{C}^2$ with respect to $y$, with
uniformly bounded derivatives. Moreover, it satisfies for any $t\geq
0$ and $x\in H^{(n)}$,
\begin{eqnarray*}
\int_{H^{(n)}}\rho_n(t,x,y)\mu^{x, n}(dy)=0.
\end{eqnarray*}
{For any} $y\in H^{(n)}$ and $s>0$ we have
\begin{eqnarray*}
\frac{\partial}{\partial s}P_{s,n}^x\rho_n(t,x,y)&=&\Big(A_ny+G_n(x,y),D_y(P_{s,n}^x\rho_n(t,x,y))\Big)_H\nonumber\\
&+&\frac{1}{2}\sigma_2^2Tr[D^2_{yy}(P_{s,n}^x\rho_n(t,x,y))Q_{2,n}^{\frac{1}{2}}(Q_{2,n}^{\frac{1}{2}})^*],
\end{eqnarray*}
here $$P_{s,n}^x\rho_n(t,x,y)=\mathbb{E}\rho_n(t, x,Y_s^{x, n}(y))$$
satisfying
\begin{equation}
\lim\limits_{s\rightarrow{+\infty}}\mathbb{E}\rho_n(t,
x,Y_s^{x,n}(y))=\int_{H^{(n)}}\rho_n(t,x,z)\mu^{x,n}(dz)=0.\label{rho-limits}
\end{equation}   Indeed, by
the invariant property of $\mu^{x,n}$ and Lemma \ref{ux} in the next
section,
\begin{eqnarray*}
&&\left|\mathbb{E}\rho_n(t,
x,Y_s^{x,n}(y))-\int_{H^{(n)}}\rho_n(t,x,z)\mu^{x,n}(dz)\right|\nonumber\\
&&=\left|\int_{H^{(n)}}\mathbb{E}[\rho_n(t, x,Y_s^{x,n}(y))-
\rho_n(t,
x,Y_s^{x,n}(z))\mu^{x,n}(dz)]\right|\nonumber\\
&&\leq\int_{H^{(n)}}\left|\mathbb{E}\Big( F_n(x,Y_s^{x,n}(z))-
F_n(x,Y_s^{x,n}(y)), D_x\bar{u}_n(t,x)\Big)_{{H}
}\right|\mu^{x,n}(dz)\nonumber\\
&&\leq C\int_{H^{(n)}} \mathbb{E} \|Y_s^{x,n}(z)- Y_s^{x,n}(y)\|
\mu^{x,n}(dz).\nonumber\\
\end{eqnarray*}
This, in view of \eqref{initial-diff} and \eqref{mu-Momenent-bound},
yields
\begin{eqnarray*}
&&\left|\mathbb{E}\rho_n(t,
x,Y_s^{x,n}(y))-\int_{H^{(n)}}\rho_n(t,x,z)\mu^{x,n}(dz)\right|\nonumber\\
&&\leq Ce^{-\frac{\beta}{2}s}(1+\|x\|+\|y\|),
\end{eqnarray*}
which implies the equality \eqref{rho-limits}. Therefore, we get
\begin{eqnarray}
&&\Big(A_ny+G_n(x,y),D_y\int_0^{{+\infty}}P_{s, n}^x\rho_n(t,x,y)
ds\Big)_H\nonumber\\
&&+\frac{1}{2}\sigma_2^2Tr[D^2_{yy}\int_0^{{+\infty}}(P_{s,n}^x\rho_n(t,x,y))Q_{2, n}^{\frac{1}{2}}(Q_{2,n}^{\frac{1}{2}})^*]ds\nonumber\\
&&=\int_0^{{+\infty}}\frac{\partial}{\partial s}P_{s,n}^x\rho_n(t,x,y)ds\nonumber\\
&&=\lim\limits_{s\rightarrow{+\infty}}\mathbb{E}\rho_n(t,
x,Y_s^{x,n}(y))-\rho_n(t,x,y)\nonumber\\
&&=\int_{H^{(n)}}\rho_n(t,x,y)\mu^{x,n}(dy)-\rho_n(t,x,y)\nonumber\\
&&=-\rho_n(t,x,y),\nonumber
\end{eqnarray}
which means
$\mathcal{L}_1^n(\int_0^{{+\infty}}P_{s,n}^x\rho_n(t,x,y)
ds)=-\rho_n(t,x,y).$ Therefore, we can set
\begin{eqnarray}\label{u-1}
u_{1,n}(t,x,y)=\int_0^{+\infty}
\mathbb{E}\rho_n(t,x,Y^{x,n}_s(y))ds.
\end{eqnarray}
\begin{lemma}\label{u-1-abso-lemma}
Assume (H.1)  and (H.2).  {Then for any }$x, y\in H^{(n)}$ and
$T>0$, we have
\begin{eqnarray}
|u_{1,n}(t,x,y)|\leq C_T(1 +\|x\|+\|y\|),\;t\in[0, T].
\label{u-1-abso}
\end{eqnarray}
\begin{proof}
As known from \eqref{u-1}, we have
\begin{equation*}
u_{1,n}(t,x,y)=\int_0^{{+\infty}}\mathbb{E}\Big(\bar{ F}_n(x)-
F_n(x,Y_s^{x,n}(y)), D_x\bar{u}_n(t,x)\Big)_{ {H} }ds.
\end{equation*}
This implies that
\begin{eqnarray*}
|u_{1,n}(t,x,y)|&\leq&\int_0^{{+\infty}}\|\bar{F}_n(x)- \mathbb{E}[
F_n(x,Y_s^{x,n}(y))]\|\cdot\|D_x\bar{u}_n(t,x)\| ds.
\end{eqnarray*}
Then, in view of Lemma \ref{ux} and \eqref{Averaging-Expectation},
this implies:
\begin{eqnarray*}
 |u_{1,n}(t,x,y)|&\leq& C_T(1+\|x\|+\|y\|)\int_0^{{+\infty}}e^{-\frac{\beta}{2} s}ds\\
&\leq&C_T(1+\|x\|+\|y\|).
\end{eqnarray*}
\end{proof}
\end{lemma}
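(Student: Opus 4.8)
The plan is to start from the explicit integral representation \eqref{u-1} of $u_{1,n}$, which by the definition $\rho_n(t,x,y)=\bigl(F_n(x,y)-\bar F_n(x), D_x\bar u_n(t,x)\bigr)_H$ can be written as
\begin{equation*}
u_{1,n}(t,x,y)=\int_0^{+\infty}\mathbb{E}\Bigl(\bar F_n(x)-F_n(x,Y_s^{x,n}(y)),\, D_x\bar u_n(t,x)\Bigr)_H\,ds.
\end{equation*}
Applying the Cauchy–Schwarz inequality in $H$ inside the expectation and then pulling the (deterministic in $y$) factor $\|D_x\bar u_n(t,x)\|$ out, one bounds $|u_{1,n}(t,x,y)|$ by $\int_0^{+\infty}\bigl\|\bar F_n(x)-\mathbb{E}F_n(x,Y_s^{x,n}(y))\bigr\|\,\|D_x\bar u_n(t,x)\|\,ds$. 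So the lemma reduces to two independent estimates: an exponential-in-$s$ decay bound for the first factor, and a bound on $\|D_x\bar u_n(t,x)\|$ uniform in $t\in[0,T]$ and $n$.

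For the first factor I would invoke the finite-dimensional analogue of \eqref{Averaging-Expectation}: since all the constants there are uniform in $n$ (Remark on $F_n,G_n$ inheriting the hypotheses), we have $\|\bar F_n(x)-\mathbb{E}F_n(x,Y_s^{x,n}(y))\|^2\le Ce^{-\beta s}(1+\|x\|^2+\|y\|^2)$ with $\beta=\alpha_1-L_g>0$, hence $\|\bar F_n(x)-\mathbb{E}F_n(x,Y_s^{x,n}(y))\|\le Ce^{-\beta s/2}(1+\|x\|+\|y\|)$. For the second factor I would appeal to Lemma \ref{ux} in the next section, which (as its use here anticipates) provides a linear-growth bound $\|D_x\bar u_n(t,x)\|\le C_T(1+\|x\|)$ for $t\in[0,T]$, uniform in $n$; this is where the assumption $\phi\in C_b^3$ together with \eqref{bar-F-derivative} enters. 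Multiplying the two bounds gives an integrand dominated by $C_T(1+\|x\|+\|y\|)^2 e^{-\beta s/2}$, and since $(1+\|x\|+\|y\|)^2$ is not quite of the desired linear form, I would instead keep the product form $(1+\|x\|+\|y\|)\cdot(1+\|x\|)$ and simply bound $(1+\|x\|)\le(1+\|x\|+\|y\|)$ is \emph{not} enough—rather, note $\|D_x\bar u_n(t,x)\|\le C_T$ is in fact bounded (not just linearly growing) because $\bar u_n(t,\cdot)=\mathbb{E}\phi(\bar X_t^n(\cdot))$ with $\phi$ having bounded first derivative and $\bar X_t^n$ depending smoothly on the initial datum with derivative controlled via \eqref{barF-lip}; then the integrand is $\le C_T(1+\|x\|+\|y\|)e^{-\beta s/2}$ directly.

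Integrating in $s$ over $[0,+\infty)$ produces the factor $\int_0^{+\infty}e^{-\beta s/2}\,ds=2/\beta<\infty$, absorbed into $C_T$, and yields $|u_{1,n}(t,x,y)|\le C_T(1+\|x\|+\|y\|)$ for all $t\in[0,T]$, $x,y\in H^{(n)}$, with $C_T$ independent of $n$. The main technical point to get right is the uniform-in-$n$ control of $\|D_x\bar u_n(t,x)\|$: one must confirm that differentiating $\mathbb{E}\phi(\bar X_t^n(x))$ in $x$ and using the Lipschitz bound \eqref{barF-lip} on $\bar F_n$ (with constant $K_F$ independent of $n$) gives $\|D_x\bar u_n(t,x)\|\le \|\phi'\|_\infty e^{K_F t}\le C_T$, so that the bound on the integrand is genuinely of linear-growth type in $(\|x\|+\|y\|)$ rather than quadratic; everything else is the exponential mixing estimate already available in \eqref{Averaging-Expectation}.
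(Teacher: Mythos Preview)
Your proposal is correct and follows the same route as the paper: write $u_{1,n}$ via \eqref{u-1}, apply Cauchy--Schwarz, control the factor $\|\bar F_n(x)-\mathbb{E}F_n(x,Y_s^{x,n}(y))\|$ by the exponential mixing estimate \eqref{Averaging-Expectation}, and control $\|D_x\bar u_n(t,x)\|$ by Lemma~\ref{ux}. Your mid-course correction is exactly right---Lemma~\ref{ux} in the paper gives the uniform bound $\|D_x\bar u_n(t,x)\|\le C_{T,\phi}$ (not linear growth), so the integrand is directly $\le C_T(1+\|x\|+\|y\|)e^{-\beta s/2}$ and no quadratic term ever appears.
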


\subsection{\textbf{Determination of remainder} $ {r^\epsilon_n}$}
Once  $u_{0,n}$ and $u_{1,n}$   have been determined, we can carry
out the construction of the remainder $r^\epsilon_n$. It is known
that
\begin{eqnarray*}
(\partial_t-\mathcal{L}^{\epsilon,n})u^\epsilon_n=0,
\end{eqnarray*}
which, together with \eqref{u-o-equ-1} and \eqref{u-0-equ-2},
implies
\begin{eqnarray*}
(\partial_t-\mathcal{L}^{\epsilon,n})r^\epsilon_n&=&-(\partial_t-\mathcal{L}^{\epsilon,n})
u_{0,n}-\epsilon(\partial_t-\mathcal{L}^{\epsilon,n})u_{1,n}\\
&=&-(\partial_t-\frac{1}{\epsilon}\mathcal{L}_1^n-\mathcal{L}_2^n)u_{0,n}-\epsilon(\partial_t
-\frac{1}{\epsilon}\mathcal{L}_1^n-\mathcal{L}_2^n)u_{1,n}\\
&=&\epsilon(\mathcal{L}_2^nu_{1,n}-\partial_tu_{1,n}).
\end{eqnarray*}
In order to estimate the remainder term $r^\epsilon_n$  we need the
following crucial lemmas.

\begin{lemma}\label{4.3}
 {Assume  that  $x, y\in H^{(n)}$}. Then, under conditions (H.1) and (H.2), for any
$T>0$  and $\theta\in(0, 1]$ we have
\begin{eqnarray*}
\left|\frac{\partial u_{1,n}}{\partial t}(t,x,y)\right|
&\leq&C_T(1+\frac{1}{t}+t^{\theta-1})(1+\|x\|+\|y\|+\|x\|_{(-A_n)^\theta})^2,\;
t\in [0, T].
\end{eqnarray*}

\begin{proof}
According to \eqref{u-1}, we have
\begin{equation}
\frac{\partial u_{1,n}}{\partial
t}(t,x,y)=\int_0^{{+\infty}}\mathbb{E}\left(\bar{F}_n(x)-
{F}_n(x,Y_s^{x,n}(y)), \frac{\partial }{\partial
t}D_x\bar{u}_n(t,x)\right)_{{H}}ds.\label{u1=deriv}
\end{equation}
For any $h\in H^{(n)},$
\begin{eqnarray}
D_x\bar{u}_n(t,x)\cdot h&=&\mathbb{E}[\phi'(\bar{X}_t^n(x))\cdot
D_x\bar{X}_t^n(x)\cdot h]\nonumber\\
&=&\mathbb{E}\Big(\phi'(\bar{X}_t^n(x)),\eta^{h,x,n}_t\Big)_H,\label{u(t,x)h-deriv}
\end{eqnarray}
here $\eta^{h,x,n}_t$ is the mild solution (also \emph{strong
solution}) of variation equation  corresponding to the problem
\eqref{aver-finite-equation} in form
\begin{eqnarray*}
\begin{cases}
d\eta ^{h,x,n}_t=\left(A_n\eta ^{h,x,n}_t+ \bar{{F}}_n'(\bar{X}_t^n(x))\cdot\eta ^{h,x,n}_t\right)dt,\\
\eta ^{h,x,n}_0=h.
\end{cases}
\end{eqnarray*}
Keep in mind that $\bar{X}_t^n(x)$ is the \emph{strong solution} of
equation \eqref{aver-finite-equation} with initial value
$\bar{X}_0^n(x)=x$. By It\^{o}'s formula in finite dimensional
spaces, we get
\begin{eqnarray*}
\phi'(\bar{X}_t^n(x))&=&\phi'(x)+\int_0^t\phi''(\bar{X}_s^n(x))\cdot
[A_n\bar{X}_s^n(x)+\bar{F}_n(\bar{X}_s^n(x))]ds\\
&+&\int_0^t\phi''(\bar{X}_s^n(x))dW_s^{1,n}\\
&+&\frac{1}{2}\sum\limits_{k=1}^n\int_0^t\phi'''(\bar{X}_s^n(x))\cdot\Big(\sqrt{\lambda_{1,k}}e_k,\sqrt{\lambda_{1,k}}e_k\Big)ds,
\end{eqnarray*}
where
$W_t^{1,n}:=\sum\limits_{k=1}^n\sqrt{\lambda_{i,k}}B^{(i)}_{t,k}e_k
$ denotes the $Q^{1,n}-$Wiener process in $H^{(n)}$. Then, by using
again It\^{o}'s formula, after taking the expectation we have
\begin{eqnarray*}
&&\!\!\!\!\!\!\!\!\!\mathbb{E}\Big(\phi'(\bar{X}_t^n(x)),\eta^{h,x,n}_t\Big)_H\\
&=&\Big(\phi'(x),h\Big)_H\\
&+&\mathbb{E}\int_0^t\Big(\eta_s^{h,x,n},\phi''(\bar{X}_s^n(x))\cdot[A_n\bar{X}_s^n(x)+\bar{F}_n(\bar{X}_s^n(x))]\Big)_Hds\\
&+&\mathbb{E}\int_0^t\Big(\phi'(\bar{X}_s^n(x)),A_n\eta_s^{h,x,n}+\bar{F}'_n(\bar{X}_s^n(x))\cdot\eta_s^{h,x,n}\Big)_Hds\\
&+&\frac{1}{2}\mathbb{E}\sum\limits_{k=1}^n\int_0^t\Big(\phi'''(\bar{X}_s^n(x))\cdot
\big(\sqrt{\lambda_{1,k}}e_k,\sqrt{\lambda_{1,k}}e_k\big),\eta_s^{h,x,n}\Big)_Hds.
\end{eqnarray*}
Now, returning to \eqref{u(t,x)h-deriv} and differentiating with
respect to $t$, we obtain
\begin{eqnarray*}
\frac{\partial}{\partial t}(D_x\bar{u}_n(t,x)\cdot h)&=&\mathbb{E} \Big(\eta_t^{h,x,n},\phi''(\bar{X}_t^n(x))\cdot[A_n\bar{X}_t^n(x)+\bar{F}_n(\bar{X}_t^n(x))]\Big)_H\\
&+&\mathbb{E}\Big(\phi'(\bar{X}_t^n(x)),A_n\eta_t^{h,x,n}+\bar{F}'_n(\bar{X}_t^n(x))\cdot\eta_t^{h,x,n}\Big)_H\\
&+&\frac{1}{2}\mathbb{E}\sum\limits_{k=1}^n\Big(\phi'''(\bar{X}_t^n(x))\cdot\big(\sqrt{\lambda_{1,k}}e_k,\sqrt{\lambda_{1,k}}e_k\big),\eta_t^{h,x,n}\Big)_H,
\end{eqnarray*}
so that
\begin{eqnarray*}
\left|\frac{\partial}{\partial t}(D_x\bar{u}_n(t,x)\cdot
h)\right|&\leq&C\mathbb{E} \left[\|\eta_t^{h,x,n}\|
(\|A_n\bar{X}_t^n(x)\|+\|\bar{F}_n(\bar{X}_t^n(x))\|)\right]\\
&+&C\mathbb{E}
\|A_n\eta_t^{h,x,n}\|+C\mathbb{E}\|\bar{F}'_n(\bar{X}_t^n(x))\cdot\eta_t^{h,x,n}\|\\
&+& \mathbb{E}\sum\limits_{k=1}^\infty{\lambda_k} \|\eta_t^{h,x,n}\|.\\
\end{eqnarray*}
Then,  as  \eqref{bar-F-derivative} holds, by using Lemma
\ref{bar-X}, Lemma \ref{eta-bound} and Lemma \ref{eta-regularity},
it follows
\begin{eqnarray*}
\left|\frac{\partial}{\partial t}(D_x\bar{u}_n(t,x)\cdot
h)\right|&\leq&C_T\|h\|(t^{\theta-1}\|x\|_{(-A_n)^\theta}+1+\|x\|)\\
&&+C\|h\|(1+\frac{1}{t})(1+\|x\|)\\
&\leq&C\|h\|(1+\frac{1}{t}+t^{\theta-1}\|x\|_{(-A_n)^\theta}
+\frac{1}{t}\|x\|+\|x\|).
\end{eqnarray*}
Hence, as \eqref{u1=deriv} holds, from the above estimate and
\eqref{Averaging-Expectation} we get
\begin{eqnarray*}
&&\!\!\!\!\!\!\!\!\!\!\!\left|\frac{\partial u_{1,n}}{\partial
t}(t,x,y)\right| \leq
C_T(1+\frac{1}{t}+t^{\theta-1}\|x\|_{(-A_n)^\theta}
+\frac{1}{t}\|x\|+\|x\|)\\
&&\qquad\qquad\qquad\cdot\int_0^{{+\infty}}\mathbb{E}
\|\bar{ F}_n(x)-  {F}_n(x,Y_s^{x,n}(y))\|ds\\
&\leq&C_T(1+\frac{1}{t}+t^{\theta-1}\|x\|_{(-A_n)^\theta}
+\frac{1}{t}\|x\|+\|x\|)(1+\|x\|+\|y\|)\int_0^{+\infty}e^{-\frac{\beta}{2}s}ds\\
&\leq&C_T(1+\frac{1}{t}+t^{\theta-1}\|x\|_{(-A_n)^\theta}
+\frac{1}{t}\|x\|+\|x\|)(1+\|x\|+\|y\|)\\
&\leq&C_T(1+\frac{1}{t}+t^{\theta-1})\left(1+\|x\|+\|y\|+\|x\|_{(-A_n)^\theta}\right)^2.
\end{eqnarray*}
\end{proof}
\end{lemma}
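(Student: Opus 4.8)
The plan is to transfer the estimate onto the time derivative of the gradient $D_x\bar u_n(t,\cdot)$ of the averaged Kolmogorov solution, and then to compute that derivative by It\^{o}'s formula, which is legitimate here since $\bar X_t^n(x)$ takes values in the finite-dimensional space $H^{(n)}$. Differentiating \eqref{u-1} in $t$, and noting that on the right-hand side only $D_x\bar u_n(t,x)$ carries a $t$-dependence, gives
\begin{equation*}
\frac{\partial u_{1,n}}{\partial t}(t,x,y)=\int_0^{+\infty}\mathbb{E}\Big(\bar F_n(x)-F_n(x,Y_s^{x,n}(y)),\,\frac{\partial}{\partial t}D_x\bar u_n(t,x)\Big)_H\,ds.
\end{equation*}
Since $\frac{\partial}{\partial t}D_x\bar u_n(t,x)$ is deterministic, the expectation moves onto $F_n$, and by the finite-dimensional version of \eqref{Averaging-Expectation} (uniform in $n$) together with \eqref{mu-Momenent-bound}, $\|\bar F_n(x)-\mathbb{E}F_n(x,Y_s^{x,n}(y))\|\le Ce^{-\beta s/2}(1+\|x\|+\|y\|)$, which is integrable in $s$. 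So the problem reduces to bounding $\|\frac{\partial}{\partial t}D_x\bar u_n(t,x)\|$ by $C_T(1+t^{-1}+t^{\theta-1})(1+\|x\|+\|x\|_{(-A_n)^\theta})$, with a constant independent of $n$.

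For this I write $D_x\bar u_n(t,x)\cdot h=\mathbb{E}\big(\phi'(\bar X_t^n(x)),\eta_t^{h,x,n}\big)_H$, where $\eta^{h,x,n}$ solves the variation equation associated with \eqref{aver-finite-equation}. First I apply It\^{o}'s formula to the $H^{(n)}$-valued process $\phi'(\bar X_t^n(x))$: this produces a drift term containing $\phi''$, a martingale term containing $\phi''$, and --- since the noise in \eqref{aver-finite-equation} is additive with covariance $Q_{1,n}$ --- a second-order It\^{o} correction containing $\phi'''$; this last term is precisely what forces the hypothesis $\phi\in C_b^3$ in Theorem \ref{theorem}. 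Then I apply It\^{o}'s formula once more to the scalar product with $\eta_t^{h,x,n}$; as $\eta^{h,x,n}$ is of bounded variation in $t$ there is no cross-variation contribution. Taking expectations kills the martingale, and differentiating the resulting time integral in $t$ yields
\begin{eqnarray*}
\frac{\partial}{\partial t}\big(D_x\bar u_n(t,x)\cdot h\big)
&=&\mathbb{E}\Big(\eta_t^{h,x,n},\,\phi''(\bar X_t^n)\cdot[A_n\bar X_t^n+\bar F_n(\bar X_t^n)]\Big)_H\\
&&+\,\mathbb{E}\Big(\phi'(\bar X_t^n),\,A_n\eta_t^{h,x,n}+\bar F_n'(\bar X_t^n)\cdot\eta_t^{h,x,n}\Big)_H\\
&&+\,\frac{1}{2}\,\mathbb{E}\sum_{k=1}^{n}\lambda_{1,k}\Big(\phi'''(\bar X_t^n)\cdot(e_k,e_k),\,\eta_t^{h,x,n}\Big)_H.
\end{eqnarray*}
Using $\phi\in C_b^3$, I bound the three terms respectively by: Cauchy--Schwarz together with Lemma \ref{bar-X} for $(\mathbb{E}\|A_n\bar X_t^n\|^2)^{1/2}$, the linear growth of $\bar F_n$ with \eqref{bar-x-moment}, and an a priori bound $\|\eta_t^{h,x,n}\|\le C_T\|h\|$ (Lemma \ref{eta-bound}); boundedness of $\bar F_n'$ (cf.\ \eqref{bar-F-derivative}), Lemma \ref{eta-bound} once more, and a regularity estimate of the form $\mathbb{E}\|A_n\eta_t^{h,x,n}\|\le C_T(1+t^{-1})(1+\|x\|)\|h\|$ (Lemma \ref{eta-regularity}); and $\sum_k\lambda_{1,k}=Tr(Q_1)\le C$ from \eqref{Trace}. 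Collecting these, $\big|\frac{\partial}{\partial t}(D_x\bar u_n(t,x)\cdot h)\big|\le C_T\|h\|\big(1+t^{-1}+t^{\theta-1}\|x\|_{(-A_n)^\theta}+t^{-1}\|x\|+\|x\|\big)$; inserting this into the formula for $\frac{\partial u_{1,n}}{\partial t}$, multiplying by the factor $(1+\|x\|+\|y\|)$ coming from the ergodic estimate and absorbing everything into $C_T(1+t^{-1}+t^{\theta-1})(1+\|x\|+\|y\|+\|x\|_{(-A_n)^\theta})^2$ completes the argument.

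The step I expect to be the main obstacle is the regularity estimate for the variation process, i.e.\ controlling $\mathbb{E}\|A_n\eta_t^{h,x,n}\|$ with a constant uniform in $n$. The process $\eta^{h,x,n}$ solves a linear equation whose forcing $\bar F_n'(\bar X_s^n)\eta_s^{h,x,n}$ is merely bounded --- not smoothing --- and only H\"{o}lder continuous in $s$, with its H\"{o}lder seminorm degenerating near $s=0$ as inherited from Lemma \ref{bar-X} and Lemma \ref{bar-Xt-Xs}; hence one cannot simply move absolute values inside the Duhamel integral $\int_0^t A_nS_{t-s,n}(\cdot)\,ds$, whose kernel $(t-s)^{-1}$ is non-integrable. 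Instead one splits it, uses the identity $A_n\int_0^t S_{t-s,n}g_s\,ds=(S_{t,n}-I)g_t+A_n\int_0^t S_{t-s,n}(g_s-g_t)\,ds$ with $g_s=\bar F_n'(\bar X_s^n)\eta_s^{h,x,n}$, and exploits the time-H\"{o}lder continuity of $g$ to make the remaining kernel integrable, carefully tracking the $t^{-1}$ and $\|x\|$ factors and their independence of $n$.
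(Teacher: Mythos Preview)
Your proposal is correct and follows essentially the same route as the paper: differentiate \eqref{u-1} in $t$, express $D_x\bar u_n(t,x)\cdot h$ via the variation process $\eta^{h,x,n}$, apply It\^{o}'s formula twice to obtain the three-term expression for $\frac{\partial}{\partial t}(D_x\bar u_n(t,x)\cdot h)$, and then bound these terms using Lemma~\ref{bar-X}, Lemma~\ref{eta-bound}, Lemma~\ref{eta-regularity}, \eqref{bar-F-derivative}, \eqref{Trace}, before closing with the ergodic estimate \eqref{Averaging-Expectation}. Your extra paragraph sketching the proof of Lemma~\ref{eta-regularity} is not needed here, since the paper treats that estimate separately (citing \cite{Brehier}, Proposition~B.6), but your outline of the splitting argument for $A_n\int_0^t S_{t-s,n}g_s\,ds$ is indeed the standard way to establish it.
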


\begin{lemma}\label{4.4}
 {Assume   }$x\in H^{(n)}$ and $y\in H^{(n)}$. Then,
under conditions (H.1) and (H.2), for any $T>0$  we have
\begin{eqnarray*}
\left|\mathcal {L}_2^nu_{1,n}(t,x,y)\right|\leq
C_T\big(1+\|A_nx\|+\|x\|+\|y\| \big)\big(1+\|x\|+\|y\| \big), \;t\in
[0, T].
\end{eqnarray*}
\begin{proof}
As known,  {for any} $x\in H^{(n)}$ it holds
\begin{eqnarray*}
\mathcal {L}_2^nu_{1,n}(t,x,y)&=&\Big(A_nx+  F_n(x, y),
D_xu_{1,n}(t,x,y)
\Big)_{H}\\
&+&\frac{1}{2}\sigma_2^2Tr\Big(D^2_{xx}u_{1,n}(t,x,y)Q_{1,n}^\frac{1}{2}(Q_{1,n}^\frac{1}{2})^*\Big).
\end{eqnarray*}
We will carry out the estimate of $\left|\mathcal {L}_2^nu_{1,n}(t,x,y)\right|$ in two steps.\\
(\textbf{Step 1}) Estimate of $\Big( {A}_nx+ {F}_n(x, y),
D_xu_{1,n}(t,x,y) \Big)_{H}$.

 For any $k\in {H}^{(n)}$, we have
\begin{eqnarray*}
& &D_xu_{1,n}(t,x,y)\cdot k\\
&&=\int_0^{{+\infty}}\Big(D_x(\bar{
F}_n(x)-\mathbb{E}{F}_n(x,Y_s^{x,n}(y)))\cdot k,D_x\bar{u}_n(t,x)\Big)_{{H}}ds\\
&&+\int_0^{{+\infty}}\Big(\bar{F}_n(x)-\mathbb{E} {F}_n(x,Y_s^{x,
n}(y)),D^2_{xx}\bar{u}_n(t,x)\cdot
k\Big)_{ {H} }ds\\
&&:=I_{1,n}(t,x,y,k)+I_{2,n}(t,x,y,k).
\end{eqnarray*}
Directly, we have
\begin{eqnarray}
&&|I_{1,n}(t,x,y,k)|\nonumber\\
&&\leq \int_0^{+\infty} \left| \Big(D_x(\bar{F}_n(x)-
\mathbb{E}{F}_n(x,Y_s^{x,n}(y)))\cdot
k,D_x\bar{u}_n(t,x)\Big)_{{H}}\right| ds.\nonumber
\end{eqnarray}
By making use of \eqref{mix-derivative}, the above yields
\begin{eqnarray}
|I_{1,n}(t,x,y,k)|&\leq&C\|k\| \cdot\|D_x\bar{u}_n(t,x)\|
\cdot\int_0^{+\infty}
e^{-c s} (1+\|x\|+\|y\|)ds\nonumber\\
&\leq& C\|k\| \cdot\|D_x\bar{u}_n(t,x)\|(1+\|x\|+\|y\|)\nonumber \\
&\leq& C\|k\|(1+\|x\|+\|y\|), \label{I_1}
\end{eqnarray}
where we used  Lemma \ref{ux} in the last step. By Lemma \ref{uxx}
and \eqref{Averaging-Expectation},  we have
\begin{eqnarray}
|I_{2,n}(t,x,y,k)|&\leq& \int_0^{{+\infty}}\left|\Big(\bar{F}_n(x)-
\mathbb{E} {F}_n(x,Y_s^{x,n}(y)),D^2_{xx}\bar{u}_n(t,x)\cdot k\Big)_{{H}}\right|ds\nonumber\\
&\leq& C\|k\|\int_0^{{+\infty}}\|\bar{F}_n(x)-
\mathbb{E}{F}_n(x,Y_s^{x,n}(y))\|ds\nonumber\\
&\leq&C\|k\|(1+\|x\|  +\|y\|)\int_0^{+\infty} e^{-\frac{\beta}{2} s} ds\nonumber\\
&\leq&C\|k\|(1 +\|x\| +\|y\|).\nonumber
\end{eqnarray}
Together with \eqref{I_1}, this allows us to get
\begin{eqnarray*}
\left|D_xu_{1,n}(t,x,y)\cdot k\right|&\leq&C\|k\| (1+\|x\|+ \|y\|)
\end{eqnarray*}
which means
\begin{eqnarray}
&&\left|\Big({A}_nx+ {F}_n(x,
y), D_xu_{1,n}(t,x,y) \Big)_{H}\right|\nonumber\\
&&\leq C\big(1+\|{A}_nx\|+\|x\|+\|y\| \big)\big(1+\|x\|+\|y\|
\big).\label{step1-estimate}
\end{eqnarray}

(\textbf{Step 2}) Estimate of
$Tr\Big(D^2_{xx}u_{1,n}(t,x,y)Q_{1,n}^\frac{1}{2}(Q_{1,n}^\frac{1}{2})^*\Big)$. \\
By differentiating twice with respect to  $x\in H^{(n)}$ in
$u_{1,n}(t,x,y)$, for any $x, h, k\in H^{(n)}$ we have
\begin{eqnarray*}
&&D_{xx}u_{1,n}(t,x,y)\cdot (h,
k)\\
&&=\int_0^{{+\infty}}\Big(D^2_{xx}(\bar{{F}}_n(x)-
\mathbb{E}{F}_n(x,Y_s^{x,n}(y)))\cdot (h,k),D_x\bar{u}_n(t,x)\Big)_{{H}}ds\\
&&+\int_0^{{+\infty}}\Big(D_x(\bar{{F}}_n(x)-
\mathbb{E}{F}_n(x,Y_s^{x,n}(y)))\cdot h,D^2_{xx}\bar{u}_n(t,x)\cdot
k\Big)_{{H} }ds\\
&&+\int_0^{{+\infty}}\Big(D_x(\bar{{F}}_n(x)-
\mathbb{E}{F}_n(x,Y_s^{x,n}(y)))\cdot k,D^2_{xx}\bar{u}_n(t,x)\cdot
h\Big)_{{H} }ds\\
&&+\int_0^{{+\infty}}\Big(\bar{{F}}_n(x)-\mathbb{E}{F}_n(x,Y_s^{x,n}(y)),
D^3_{xxx}\bar{u}_n(t,x)\cdot (h,k)\Big)_{{H}
}ds\\
&&:=\sum\limits_{i=1}^4J_{i,n}(t,x,y,h,k).
\end{eqnarray*}
By taking Lemma \ref{mix-derivative-2} and Lemma \ref{ux}   into
account, we can deduce
\begin{eqnarray}
&&|J_{1,n}(t,x,y,h,k)|\nonumber\\
&&\leq C\|h\|\cdot\|k\|\cdot\int_0^{+\infty}
e^{-c s} (1+\|x\|+\|y\|)ds\nonumber\\
&&\leq C\|h\|\cdot\|k\|(1+\|x\|+\|y\|). \label{J_1}
\end{eqnarray}
Next, thanks to  Lemma \ref{mix-derivative} and Lemma \ref{uxx} it
holds
\begin{eqnarray}
&&|J_{2,n}(t,x,y,h,k)|\nonumber\\
&&\leq C\|h\|\cdot\|k\| \cdot\int_0^{+\infty}
e^{-c s} (1+\|x\|+\|y\|)ds\nonumber\\
&&\leq C\|h\|\cdot\|k\|(1+\|x\|+\|y\|). \label{J_2}
\end{eqnarray}
Parallel to \eqref{J_2},  we can obtain the same estimate for
$J_{3,n}(t,x,y, h,k)$, that is,
\begin{eqnarray}
|J_{3,n}(t,x,y,h,k)| \leq C\|h\|\cdot\|k\|(1+\|x\|+\|y\|).
\label{J_3}
\end{eqnarray}
Thanks to Lemma \ref{uxxx} and \eqref{Averaging-Expectation}, we get
\begin{eqnarray}
&& |J_{4,n}(t,x,y,h,k)|\nonumber\\
&&\leq C\|h\|\cdot\|k\|
\cdot\int_0^{+\infty}e^{-\frac{\beta}{2} s} (1+\|x\|+\|y\|)ds\nonumber\\
&&\leq C\|h\|\cdot\|k\|(1+\|x\|+\|y\|). \label{J_4}
\end{eqnarray}
Collecting together \eqref{J_1}, \eqref{J_2}, \eqref{J_3} and
\eqref{J_4}, we obtain

\begin{eqnarray*}
|D^2_{xx}u_{1,n}(t,x,y)\cdot (h, k)|\leq C
\|h\|\cdot\|k\|(1+\|x\|+\|y\|),
\end{eqnarray*}
 so that, as the operator $Q_1$ has finite trace (see \eqref{Trace}), we get
\begin{eqnarray}
&&\left|Tr\Big(D^2_{xx}u_{1,n}(t,x,y)Q_{1,n}^\frac{1}{2}(Q_{1,n}^\frac{1}{2})^*\Big)\right|\nonumber\\
&&=\sum\limits_{k=1}^n \left|D^2_{xx}u_{1,n}(t,x,y)\cdot\Big(\sqrt{\lambda_{1,k}}e_k, \sqrt{\lambda_{1,k}}e_k\Big)\right|\nonumber\\
&&\leq C(1+\|x\|+\|y\|). \label{Tr}
\end{eqnarray}

Finally, by taking  inequalities \eqref{step1-estimate} and
\eqref{Tr} into account, we can conclude the proof of the lemma.
\end{proof}
\end{lemma}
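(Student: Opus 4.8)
The plan is to expand $\mathcal{L}_2^n u_{1,n}(t,x,y)$ into its first–order drift part $\big(A_n x+F_n(x,y),D_x u_{1,n}(t,x,y)\big)_H$ and its second–order part $\frac{1}{2}\sigma_1^2\,Tr\big(D^2_{xx}u_{1,n}(t,x,y)\,Q_{1,n}^{1/2}(Q_{1,n}^{1/2})^*\big)$, and to estimate each separately so that the unbounded factor $\|A_n x\|$ enters only once, through the pairing with $A_n x$. In both parts the starting point is the explicit formula
\begin{equation*}
u_{1,n}(t,x,y)=\int_0^{+\infty}\mathbb{E}\big(\bar F_n(x)-F_n(x,Y^{x,n}_s(y)),\,D_x\bar u_n(t,x)\big)_H\,ds
\end{equation*}
from \eqref{u-1}, whose integrand is controlled in $s$ by the exponential ergodic bound \eqref{Averaging-Expectation}, i.e. by $Ce^{-\beta s/2}(1+\|x\|+\|y\|)$ times $\|D_x\bar u_n(t,x)\|$.

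\textbf{Drift part.} I would differentiate the integrand once in $x$. By the product rule this splits into a term where the derivative hits $\bar F_n(x)-\mathbb{E}F_n(x,Y^{x,n}_s(y))$ and is paired with $D_x\bar u_n(t,x)$, and a term where it is paired with $D^2_{xx}\bar u_n(t,x)$. For the first I need that $x\mapsto\mathbb{E}F_n(x,Y^{x,n}_s(y))$ has an $x$–derivative that still decays exponentially in $s$ — a regularity-in-the-slow-parameter and in-the-initial-datum estimate for the frozen fast equation \eqref{frozen-finite}, which follows from differentiating that equation and using the dissipativity $L_g<\alpha_1$ as in \eqref{initial-diff}. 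For the second I only need the $L^1_s$–integrable decay of $\|\bar F_n(x)-\mathbb{E}F_n(x,Y^{x,n}_s(y))\|$ from \eqref{Averaging-Expectation} together with a uniform bound on $D^2_{xx}\bar u_n$. Combining gives $|D_x u_{1,n}(t,x,y)\cdot k|\le C\|k\|(1+\|x\|+\|y\|)$; pairing with $A_n x+F_n(x,y)$ and using the linear growth of $F_n$ then yields $\big|\big(A_n x+F_n(x,y),D_x u_{1,n}(t,x,y)\big)_H\big|\le C\big(1+\|A_n x\|+\|x\|+\|y\|\big)\big(1+\|x\|+\|y\|\big)$.

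\textbf{Trace part.} Here I differentiate the integrand twice in $x$, obtaining four terms according to how the two derivatives distribute between $\bar F_n(x)-\mathbb{E}F_n(x,Y^{x,n}_s(y))$ and $D_x\bar u_n(t,x)$: one of type $(D^2_{xx}\text{-drift})\cdot D_x\bar u_n$, two of type $(D_x\text{-drift})\cdot D^2_{xx}\bar u_n$, and one of type $(\text{drift})\cdot D^3_{xxx}\bar u_n$. Each is bounded by $C\|h\|\|k\|(1+\|x\|+\|y\|)$ using either exponential-in-$s$ decay of the first or second $x$–derivative of $\mathbb{E}F_n(x,Y^{x,n}_s(y))$, or \eqref{Averaging-Expectation}, together with uniform bounds on $D_x\bar u_n$, $D^2_{xx}\bar u_n$ and $D^3_{xxx}\bar u_n$; the appearance of the third derivative is precisely what forces $\phi\in C_b^3$. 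Hence $|D^2_{xx}u_{1,n}(t,x,y)\cdot(h,k)|\le C\|h\|\|k\|(1+\|x\|+\|y\|)$, and expanding the trace in the eigenbasis $\{e_k\}$ and invoking $Tr(Q_1)<\infty$ from \eqref{Trace} gives $\big|Tr\big(D^2_{xx}u_{1,n}(t,x,y)\,Q_{1,n}^{1/2}(Q_{1,n}^{1/2})^*\big)\big|\le C(1+\|x\|+\|y\|)$. Adding the two parts completes the proof.

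The main obstacle, which I would package as separate technical lemmas, is establishing the two families of auxiliary estimates: (i) uniform-in-$n$ bounds on $D_x\bar u_n$, $D^2_{xx}\bar u_n$, $D^3_{xxx}\bar u_n$, obtained by differentiating $\bar u_n(t,x)=\mathbb{E}\phi(\bar X^n_t(x))$ through the iterated variational equations of \eqref{aver-finite-equation} and using $\phi\in C_b^3$ together with the regularity $\|\bar F_n'(x)\cdot h\|\le C\|h\|$ of \eqref{bar-F-derivative}; and (ii) exponential-in-$s$ decay of the first and second $x$–derivatives of $x\mapsto\mathbb{E}F_n(x,Y^{x,n}_s(y))$, obtained by differentiating the frozen equation \eqref{frozen-finite} in its slow parameter and its initial datum and exploiting $L_g<\alpha_1$. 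Everything else is routine bookkeeping with Minkowski's inequality and the finite-trace hypotheses, with the caveat that all constants must be kept independent of the Galerkin dimension $n$.
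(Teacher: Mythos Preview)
Your proposal is correct and follows essentially the same route as the paper: the same split into drift and trace parts, the same product-rule decomposition of $D_x u_{1,n}$ into two terms and of $D^2_{xx}u_{1,n}$ into four terms, and the same auxiliary ingredients. The technical lemmas you flag at the end are exactly the ones the paper isolates in its Appendix (Lemmas~\ref{ux}, \ref{uxx}, \ref{uxxx} for the derivatives of $\bar u_n$, and Lemmas~\ref{mix-derivative}, \ref{mix-derivative-2} for the exponential-in-$s$ decay of the $x$-derivatives of $\bar F_n(x)-\mathbb{E}F_n(x,Y^{x,n}_s(y))$), so nothing is missing.
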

As a consequence of Lemma \ref{4.3} and \ref{4.4}, we have the
following fact for the remainder term $r^\epsilon_n$.
\begin{lemma}\label{4.5}
Under the conditions of Lemma \ref{4.3}, for any  $r\in (0,
\frac{1}{4})$ we have
\begin{eqnarray*}
r_n^\epsilon(T,x,y)\leq C_{r,
T,\theta}\epsilon^{1-2r}(1+\|x\|^2+\|y\|^2+\|x\|^2_{(-A_n)^\theta}).
\end{eqnarray*}

\begin{proof}
By a variation of constant formula,   we have
\begin{eqnarray}
&&r_n^\epsilon(T,x,y)\nonumber\\
&&=\mathbb{E}[r_n^\epsilon(\delta_\epsilon,X^{\epsilon,n}_{T-\delta_\epsilon}(x,y),Y_{T-\delta_\epsilon}^{\epsilon,n}(x, y))]\nonumber\\
&&+\epsilon\mathbb{E}\left[\int^T_{\delta_\epsilon}(\mathcal{L}_2^nu_{1,n}-\frac{\partial
u_{1,n}}{\partial s})(s,
X^{\epsilon,n}_{T-s}(x,y),Y^{\epsilon,n}_{{T-s}}(x, y))
ds\right],\label{r-mild}
\end{eqnarray}
where $\delta_\epsilon\in (0,\frac{T}{2})$ is a constant, only
depending on $\epsilon>0$, to be chosen later. Now, we estimate the
two terms in the right hand side of \eqref{r-mild}. Firstly, note
that $u_n^\epsilon(0,x,y)=\bar{u}_n(0,x)$, it holds
\begin{eqnarray*}
r_n^\epsilon(\delta_\epsilon,
x,y)&=&u_n^\epsilon(\delta_\epsilon,x,y)-\bar{u}_n(\delta_\epsilon,x)-\epsilon
u_{1,n}(\delta_\epsilon,x,y)\\
&=&-\epsilon
u_{1,n}(\delta_\epsilon,x,y)+[u_n^\epsilon(\delta_\epsilon,x,y)-u_n^\epsilon(0,x,y)]\\
&&-[\bar{u}_n(\delta_\epsilon,x)-\bar{u}_n(0,x)].
\end{eqnarray*}
By lemma \ref{u-1-abso-lemma}, we have
\begin{eqnarray}
|\epsilon u_{1,n}(\delta_\epsilon,x,y)|\leq  C_T\epsilon
(1+\|x\|+\|y\|).\label{r-u1}
\end{eqnarray}
By using It\^{o}'s formula and taking  the expectation we obtain
\begin{eqnarray*}
&&u_n^\epsilon(\delta_\epsilon,x,y)-u_n^\epsilon(0,x,y)\\
&&=\mathbb{E}\int_0^{\delta_\epsilon}\phi'(X_s^{\epsilon,n}(x,y))\cdot[A_nX_s^{\epsilon,n}(x,y)+F_n(X_s^{\epsilon,n}(x,y),Y_s^{\epsilon,n}(x,y))]
ds\\
&&\quad+\frac{1}{2}\mathbb{E}\sum\limits_{k=1}^n\int_0^{\delta_\epsilon}\phi''(X_s^\epsilon(x,y))\cdot\left(\sqrt{\lambda_{1,k}}e_k,
\sqrt{\lambda_{1,k}}e_k\right).
\end{eqnarray*}
Then, due to Lemma \ref{moment-bound} and \ref{A-X}, for any $r\in
(0, \frac{1}{4})$ we have
\begin{eqnarray}
&&\left|u_n^\epsilon(\delta_\epsilon,x,y)-u_n^\epsilon(0,x,y)\right|\nonumber\\
&&\leq C\int_0^{\delta_\epsilon}
\big[\mathbb{E}\|A_nX_s^{\epsilon,n}(x,y)\|+1+\mathbb{E}\|X_s^{\epsilon,n}(x,y)\|+\mathbb{E}\|Y_s^{\epsilon,n}(x,y)\|\big]
ds\nonumber\\
&&\quad+C Tr(Q_1)\delta_\epsilon\nonumber\\
&&  \leq
C_{r,T}(\delta_\epsilon+\frac{\delta_\epsilon^\theta}{\theta}+\frac{\delta_\epsilon
}{\epsilon^r})(1+\|x\|_{(-A_n)^\theta}+\|x\|+\|y\|).  \label{r-u}
\end{eqnarray}
By using again It\^{o}'s formula, we get
\begin{eqnarray*}
&&\bar{u}_n(\delta_\epsilon,x)-\bar{u}_n(0,x)\\
&&=\mathbb{E}\int_0^{\delta_\epsilon}\phi'(\bar{X}_s^n
(x))\cdot[A_n\bar{X}_s^n(x)+\bar{F}_n(\bar{X}_s^n(x))]
ds\\
&&\quad+\frac{1}{2}\mathbb{E}\sum\limits_{k=1}^n\int_0^{\delta_\epsilon}\phi''(\bar{X}_s^n(x))\cdot\left(\sqrt{\lambda_{1,k}}e_k,
\sqrt{\lambda_{1,k}}e_k\right).
\end{eqnarray*}
Then, thanks to Lemma \ref{bar-X} and \eqref{bar-x-moment} it holds
\begin{eqnarray*}
&&\left|\bar{u}_n(\delta_\epsilon,x)-\bar{u}_n(0,x)\right|\\
&&\leq C\int_0^{\delta_\epsilon}
\big[\mathbb{E}\|A_n\bar{X}_s^n(x)\|+1+\mathbb{E}\|\bar{X}_s^n(x)\|\big]
ds\\
&&\quad+C Tr(Q_1)\delta_\epsilon\\
&&  \leq C_T(\delta_\epsilon+\frac{\delta_\epsilon^\theta}{\theta}
)(1+\|x\|_{(-A_n)^\theta}+\|x\|),
\end{eqnarray*}
which, in view of \eqref{r-u1} and \eqref{r-u}, means that
\begin{eqnarray*}
|r_n^\epsilon(\delta_\epsilon, x,y)|\leq
C_{r,T}(\epsilon+\delta_\epsilon+\frac{\delta_\epsilon^\theta}{\theta}+\frac{\delta_\epsilon
}{\epsilon^r})(1+\|x\|_{(-A_n)^\theta}+\|x\|+\|y\|),
\end{eqnarray*}
so that, due to Lemma \ref{moment-bound} and \ref{A-X},  {this
easily} implies that
\begin{eqnarray*}
&&\mathbb{E}[r_n^\epsilon(\delta_\epsilon,X^{\epsilon,n}_{T-\delta_\epsilon}(x,y),Y_{T-\delta_\epsilon}^{\epsilon,n}(x,
y))]\\
&&\leq
C_{r,T}(\epsilon+\delta_\epsilon+\frac{\delta_\epsilon^\theta}{\theta}+\frac{\delta_\epsilon
}{\epsilon^r})(1+\mathbb{E}\|X^{\epsilon,n}_{T-\delta_\epsilon}(x,y)\|_{(-A_n)^\theta}+\mathbb{E}\|Y^{\epsilon,n}_{T-\delta_\epsilon}(x,y)\|)\\
&&\leq
C_{r,T}(\epsilon+\delta_\epsilon+\frac{\delta_\epsilon^\theta}{\theta}+\frac{\delta_\epsilon
}{\epsilon^r})\big(|T-\delta_\epsilon|^{\theta-1}\|x\|_{(-A)^\theta}+(1+\|x\|+\|y\|)
(1+\frac{1}{\epsilon^{r}})\big)\\
&&\leq C_{r, T,
\theta}(\epsilon+\delta_\epsilon+\frac{\delta_\epsilon^\theta}{\theta}+\frac{\delta_\epsilon
}{\epsilon^r})(1+\frac{1}{\epsilon^{r}})(\|x\|_{(-A)^\theta}+1+\|x\|+\|y\|).
\end{eqnarray*}
If we pick $\delta_\epsilon=\epsilon^{\frac{1}{\theta}}\leq
\epsilon$, we get
\begin{eqnarray}
&&\mathbb{E}[r_n^\epsilon(\delta_\epsilon,X^{\epsilon,n}_{T-\delta_\epsilon}(x,y),Y_{T-\delta_\epsilon}^{\epsilon,n}(x,
y))]\nonumber\\
&&\leq C_{r, T,
\theta}\epsilon^{1-2r}(1+\|x\|_{(-A_n)^\theta}+\|x\|+\|y\|).\label{r-1-expectation}
\end{eqnarray}
Next, we estimate the second term in the right hand side of
\eqref{r-mild}. Thanks to Lemma \ref{4.3} and Lemma \ref{4.4}, we
have
\begin{eqnarray*}
&& \left|(\mathcal{L}_2^nu_{1,n}-\frac{\partial u_{1,n}}{\partial
s})(s,
X^{\epsilon,n}_{T-s}(x,y),Y_{T-s}^{\epsilon,n} (x,y))\right|\\
&&\leq C_T
 (1+\frac{1}{s}+s^{\theta-1})\big[1+\|X^{\epsilon,n}_{T-s}(x,y)\|+\|Y^{\epsilon,n}_{T-s}(x,y)\|\big]^2\\
&&\quad+C_T\|{A}_nX^{\epsilon,n}_{T-s}(x,y)\|\left(1+\|X^{\epsilon,n}_{T-s}(x,y)\|+\|X^{\epsilon,n}_{T-s}(x,y)\|\right),
\end{eqnarray*}
and, according to the previous Lemma \ref{moment-bound} and  Lemma
\ref{A-X}, this implies that

\begin{eqnarray*}
&&\epsilon\left|\mathbb{E}\left[\int^T_{\delta_\epsilon}(\mathcal{L}_2^nu_{1,n}-\frac{\partial
u_{1,n}}{\partial s})(s,
X^{\epsilon,n}_{T-s}(x,y),Y^{\epsilon,n}_{{T-s}}(x, y))
ds\right]\right|\\
&&\leq C_T\epsilon
\int^T_{\delta_\epsilon}(1+\frac{1}{s}+s^{\theta-1})\mathbb{E}\big[1
+\|X^{\epsilon,n}_{T-s}(x,y)\|^2+\|Y^{\epsilon,n}_{T-s}(x,y)\|^2\big] ds  \\
&&\quad+
C_{r,T}\epsilon\int_{\delta_\epsilon}^T\left[\mathbb{E}\|{A}_nX^{\epsilon,n}_{T-s}(x,y)\|^2\right]^\frac{1}{2}
\\
&&\qquad\qquad\qquad\cdot\left[\mathbb{E}(1+\|X^{\epsilon,n}_{T-s}(x,y)\|+\|X^{\epsilon,n}_{T-s}(x,y)\|)^2\right]^\frac{1}{2}ds\\
&&\leq C_{r,T}\epsilon(1+\|x\|^2+\|y\|^2+\|x\|^2_{(-A_n)^\theta})\\
&&\quad\quad\cdot \int^T_{\delta_\epsilon}(1+\frac{1}{\epsilon^r}+\frac{1}{s}+s^{\theta-1}+|T-s|^{\theta-1})ds\\
&&\leq C_{r,T} \epsilon(T+\frac{T^\theta}{\theta}+|\log T|
+|\log(\delta_\epsilon)|+\frac{T}{\epsilon^r})\\
&&\quad\quad\cdot(1+\|x\|^2+\|y\|^2+\|x\|^2_{(-A_n)^\theta})\\
&&\leq
C_{r,\theta,T}\epsilon(1+|\log\epsilon|+\frac{1}{\epsilon^r})(1+\|x\|^2+\|y\|^2+\|x\|^2_{(-A_n)^\theta})\\
&&\leq
C_{r,\theta,T}\epsilon^{1-r}(1+\|x\|^2+\|y\|^2+\|x\|^2_{(-A_n)^\theta}),
\end{eqnarray*}
which, together with  \eqref{r-1-expectation}, completes the proof.

\end{proof}

\end{lemma}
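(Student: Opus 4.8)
The starting point is the identity $(\partial_t-\mathcal{L}^{\epsilon,n})r^\epsilon_n=\epsilon(\mathcal{L}_2^nu_{1,n}-\partial_tu_{1,n})$ obtained just above, together with the observation that $r^\epsilon_n(0,x,y)=u^\epsilon_n(0)-\bar u_n(0)-\epsilon u_{1,n}(0)=-\epsilon u_{1,n}(0,x,y)$, so that $r^\epsilon_n$ solves a linear Kolmogorov equation with a known source and a small, non-vanishing initial datum. The plan is to represent $r^\epsilon_n(T,x,y)$ probabilistically through the Markov process $(X^{\epsilon,n}_t,Y^{\epsilon,n}_t)$, but to propagate only from a positive time $\delta_\epsilon\in(0,T/2)$ rather than from $0$: by Lemma \ref{4.3} the source blows up like $s^{-1}$ as $s\to0^+$ and is not integrable there, and by Lemma \ref{4.4} it also carries a factor $\|A_nx\|$ which, evaluated along the slow process, is only controlled by $\epsilon^{-r}$ via Lemma \ref{A-X}. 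Using the Markov property (a Duhamel formula started at $\delta_\epsilon$), one writes
\[
r_n^\epsilon(T,x,y)=\mathbb{E}\bigl[r_n^\epsilon(\delta_\epsilon,X^{\epsilon,n}_{T-\delta_\epsilon},Y_{T-\delta_\epsilon}^{\epsilon,n})\bigr]+\epsilon\,\mathbb{E}\int^T_{\delta_\epsilon}\bigl(\mathcal{L}_2^nu_{1,n}-\partial_su_{1,n}\bigr)\bigl(s,X^{\epsilon,n}_{T-s},Y^{\epsilon,n}_{T-s}\bigr)\,ds,
\]
and one optimises the free parameter $\delta_\epsilon$ only at the very end. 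It suffices to bound $|r_n^\epsilon(T,x,y)|$, which of course yields the one-sided statement.

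For the boundary term, decompose $r_n^\epsilon(\delta_\epsilon,x,y)=-\epsilon u_{1,n}(\delta_\epsilon,x,y)+[u_n^\epsilon(\delta_\epsilon,x,y)-u_n^\epsilon(0,x,y)]-[\bar u_n(\delta_\epsilon,x)-\bar u_n(0,x)]$, using $u_n^\epsilon(0,\cdot)=\bar u_n(0,\cdot)=\phi$. The first piece is $O(\epsilon(1+\|x\|+\|y\|))$ by Lemma \ref{u-1-abso-lemma}. For the two increments, apply It\^{o}'s formula in $H^{(n)}$ to $\phi(X^{\epsilon,n}_s)$ and $\phi(\bar X^n_s)$, take expectations, and estimate the resulting time integrals using boundedness of $\phi'$ and $\phi''$, the finite trace of $Q_1$, the moment bounds of Lemma \ref{moment-bound} and \eqref{bar-x-moment}, and Lemma \ref{A-X} / Lemma \ref{bar-X} to handle $\mathbb{E}\|A_nX^{\epsilon,n}_s\|$ and $\mathbb{E}\|A_n\bar X^n_s\|$ — the $s^{\theta-1}$ singularity there being integrable on $[0,\delta_\epsilon]$ and producing a factor $\delta_\epsilon^{\theta}/\theta$, while the $\epsilon^{-r}$ factor survives. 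This yields $|r_n^\epsilon(\delta_\epsilon,x,y)|\le C_{r,T}(\epsilon+\delta_\epsilon+\delta_\epsilon^{\theta}/\theta+\delta_\epsilon\epsilon^{-r})(1+\|x\|_{(-A_n)^\theta}+\|x\|+\|y\|)$. Substituting the processes at time $T-\delta_\epsilon\ge T/2$ and taking expectation, Lemma \ref{moment-bound} and Lemma \ref{A-X} (where $|T-\delta_\epsilon|^{\theta-1}$ is a harmless constant) contribute one further factor $(1+\epsilon^{-r})$; with the choice $\delta_\epsilon=\epsilon^{1/\theta}\le\epsilon$ the bracket becomes $O(\epsilon^{1-r})$, so the boundary term is $O\bigl(\epsilon^{1-2r}(1+\|x\|_{(-A_n)^\theta}+\|x\|+\|y\|)\bigr)$.

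For the source integral, insert the pointwise bounds of Lemma \ref{4.3} and Lemma \ref{4.4} at $(s,X^{\epsilon,n}_{T-s},Y^{\epsilon,n}_{T-s})$, peel off the $\|A_nX^{\epsilon,n}_{T-s}\|$ contribution by the Cauchy--Schwarz inequality, and apply Lemma \ref{moment-bound} and Lemma \ref{A-X} under the expectation. The time integral then reduces to $\int_{\delta_\epsilon}^T(1+\epsilon^{-r}+s^{-1}+s^{\theta-1}+|T-s|^{\theta-1})\,ds\le C_{r,\theta,T}(\epsilon^{-r}+|\log\delta_\epsilon|)$, and since $\delta_\epsilon=\epsilon^{1/\theta}$ gives $|\log\delta_\epsilon|=\theta^{-1}|\log\epsilon|\le C_{r,\theta}\epsilon^{-r}$, multiplication by the outer $\epsilon$ produces a contribution of order $\epsilon^{1-r}(1+\|x\|^2+\|y\|^2+\|x\|^2_{(-A_n)^\theta})$. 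Adding this to the boundary term and using $\epsilon^{1-r}\le\epsilon^{1-2r}$ together with $\|\cdot\|\le 1+\|\cdot\|^2$ gives the asserted bound.

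The crux — and the reason for the two-region splitting — is the tension between the $s\to0^+$ singularities of $\partial_su_{1,n}$ and $\mathcal{L}_2^nu_{1,n}$ and the $\epsilon^{-r}$ loss carried by $\|A_nX^{\epsilon,n}_s\|$ in Lemma \ref{A-X}: one cannot integrate the singular (indeed, because of the $s^{-1}$ term, non-integrable) source down to $0$, yet cutting at $\delta_\epsilon$ costs a logarithm / negative power of $\delta_\epsilon$ that must be dominated by $\epsilon^{-r}$ after the optimal choice $\delta_\epsilon=\epsilon^{1/\theta}$. Keeping the bookkeeping of these competing powers right — so that the final exponent is $1-2r$ with $r>0$ arbitrarily small, i.e.\ weak order $1^-$ — is the delicate point; everything else is a routine combination of the a priori estimates of Sections 3--4 with It\^{o}'s formula in the Galerkin spaces $H^{(n)}$.
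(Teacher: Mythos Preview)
Your proposal is correct and follows essentially the same route as the paper: the Duhamel/variation-of-constants representation started at a positive time $\delta_\epsilon$, the same three-term decomposition of $r_n^\epsilon(\delta_\epsilon,\cdot,\cdot)$ handled by It\^{o}'s formula together with Lemmas \ref{u-1-abso-lemma}, \ref{moment-bound}, \ref{A-X}, \ref{bar-X}, the same treatment of the source integral via Lemmas \ref{4.3}, \ref{4.4} and Cauchy--Schwarz, and the same choice $\delta_\epsilon=\epsilon^{1/\theta}$ leading to the exponent $1-2r$. Your added remark explaining \emph{why} one must start at $\delta_\epsilon>0$ (non-integrability of the $s^{-1}$ term in Lemma \ref{4.3}) makes the motivation explicit, but the argument itself coincides with the paper's.
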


\subsection{Proof of {Theorem 3.1}}
Now we finish proof of main result introduced in Section 3
\begin{proof}
We stress that we  need only to prove \eqref{finite-weak}.  With the
%%%%%%    brevity
notations introduced above, by Lemma \ref{u-0}, Lemma
\ref{u-1-abso-lemma} and Lemma \ref{4.5}, for any $r\in (0, 1)$
$x\in \mathscr{D}((-A)^{\theta})$ and $y\in H$ we have
\begin{eqnarray*}
&&\left|\mathbb{E}\phi(X^{\epsilon,n}_T(x^{(n)},y^{(n)}))-\mathbb{E}\phi(\bar{X}^{n}_T(x^{(n)}))\right|\\
&&=|u_n^\epsilon(T,x^{(n)},y^{(n)})-\bar{u}_n(T,x^{(n)}))|\\
&&=|u_{1,n}^\epsilon(T, x^{(n)},y^{(n)})|\epsilon+|r_n^\epsilon (T, x^{(n)},y^{(n)})|\\
&&\leq
C_{r,\theta,T}\epsilon^{1-r}(1+\|x^{(n)}\|^2+\|y^{(n)}\|^2+\|x^{(n)}\|^2_{(-A_n)^\theta})\\
&&\leq
C_{r,\theta,T}\epsilon^{1-r}(1+\|x\|^2+\|y\|^2+\|x\|^2_{(-A)^\theta}),
\end{eqnarray*}
where   $C_{r,\theta,T}$ is a constant independent of the dimension
  $n$. \\
The proof of Theorem \ref{theorem} is completed.
\end{proof}

\section{Appendix}
In this section, we state and prove some technical lemmas used in
the previous sections. We first study the differential dependence on
initial datum for the solution  $\bar{X}_t^n(x)$ of the averaged
system \eqref{aver-finite-equation}. In what follows we denote by
$\eta ^{h,x,n}_t$ the derivative of $\bar{X}_t^n(x)$ with respect to
$x$ along direction $h\in H^{(n)}$.

\begin{lemma}\label{eta-bound}
Under   (H.1) and (H.2), for any $x,h\in H^{(n)}$ and $T>0$ there
exits a constant $C_T>0$ such that for any $x, h\in H^{(n)}$,
\begin{eqnarray*}
\|\eta ^{h,x,n}_t\| \leq C_T\|h\|, \;t\in [0, T].
\end{eqnarray*}
\begin{proof}
Note that $\eta^{h,x,n}_t$ is the mild solution of  the first
variation equation associated with the problem
\eqref{aver-finite-equation}:
\begin{eqnarray*}
\begin{cases}
d\eta ^{h,x,n}_t=\left({A}_n\eta ^{h,x,n}_t+ \bar{{F}}'(\bar{X}_t^n(x))\cdot\eta ^{h,x,n}_t\right)dt,\\
\eta ^{h,x,n}_0=h.
\end{cases}
\end{eqnarray*}
This means that  $\eta ^{h,x,n}_t$  is the  solution of the integral
equation
\begin{eqnarray*}
\eta^{h,x,n}_t={S}_{t,n}h+\int_0^t {S}_{t-s,n}[
\bar{{F}}'_n(\bar{X}_s(x))\cdot\eta ^{h,x,n}_s]ds,
\end{eqnarray*}
and then, due to \eqref{bar-F-derivative} and  contractive property
of $S_{t,n}$, we get
\begin{eqnarray*}
\|\eta ^{h,x,n}_t\| \leq \|h\| +C\int_0^t\|\eta ^{h,x,n}_s\|ds.
\end{eqnarray*}
Then by  Gronwall lemma it follows that
\begin{eqnarray*}
\|\eta ^{h,x,n}_t\| \leq C_T\|h\|, \;t\in [0, T].
\end{eqnarray*}
\end{proof}
\end{lemma}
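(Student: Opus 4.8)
The plan is to treat $\eta^{h,x,n}_t$ as the mild solution of the first variation (linearized) equation associated with the averaged system \eqref{aver-finite-equation} and then run a routine Gronwall argument. First I would recall that $\eta^{h,x,n}_t$ solves
\begin{eqnarray*}
d\eta^{h,x,n}_t = \big( A_n\eta^{h,x,n}_t + \bar{F}_n'(\bar{X}_t^n(x))\cdot\eta^{h,x,n}_t \big)\,dt,\qquad \eta^{h,x,n}_0 = h,
\end{eqnarray*}
which, by the variation-of-constants formula, is equivalent to the integral equation
\begin{eqnarray*}
\eta^{h,x,n}_t = S_{t,n}h + \int_0^t S_{t-s,n}\big[\bar{F}_n'(\bar{X}_s^n(x))\cdot\eta^{h,x,n}_s\big]\,ds.
\end{eqnarray*}
Well-posedness of this equation is immediate in the finite-dimensional space $H^{(n)}$ since $\bar{F}_n'$ is bounded.

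Next I would take norms on both sides. Because $\{S_{t,n}\}_{t\ge0}$ is the restriction of the contraction semigroup $\{S_t\}_{t\ge0}$ to its invariant subspace $H^{(n)}$, it is itself contractive, so the first term is bounded by $\|h\|$. For the integrand I would invoke the derivative estimate \eqref{bar-F-derivative} — valid for $\bar{F}_n$ with a constant independent of $n$, as noted in the remark that $\bar{F}_n$ inherits all properties of $\bar{F}$ uniformly in $n$ — to obtain $\|\bar{F}_n'(\bar{X}_s^n(x))\cdot\eta^{h,x,n}_s\| \le C\|\eta^{h,x,n}_s\|$ with $C$ independent of $n$ and of $x$. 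This yields
\begin{eqnarray*}
\|\eta^{h,x,n}_t\| \le \|h\| + C\int_0^t \|\eta^{h,x,n}_s\|\,ds,\qquad t\in[0,T].
\end{eqnarray*}

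Finally I would apply Gronwall's lemma to conclude $\|\eta^{h,x,n}_t\| \le e^{Ct}\|h\| \le C_T\|h\|$ on $[0,T]$. I do not expect any genuine obstacle here; the only points requiring a little care are that the constant in \eqref{bar-F-derivative}, and hence $C_T$, must be independent of the Galerkin index $n$ — which is precisely the uniform-in-$n$ regularity of $F_n$ (and thus of $\bar{F}_n$) recorded earlier — and that one should use the contractivity rather than the smoothing estimate \eqref{propo-1} for $S_{t,n}$, since no fractional power of $A_n$ is needed for this particular bound.
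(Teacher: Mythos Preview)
Your proposal is correct and follows essentially the same approach as the paper: write $\eta^{h,x,n}_t$ via the mild formulation, use contractivity of $S_{t,n}$ together with the derivative bound \eqref{bar-F-derivative} to obtain the integral inequality, and conclude by Gronwall. Your additional remarks on uniformity in $n$ and on using contractivity rather than the smoothing estimate are accurate and only make the argument more explicit.
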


\begin{lemma}\label{eta-continuous}
Under the conditions of Lemma \ref{eta-bound}, for any $T>0$ and
$r\in (0,1)$ there exists a constant $C_{r, T }>0$ such that for any
$x,h\in H^{(n)}$ and $0<s\leq t\leq T$,
\begin{eqnarray*}
\|\eta_t^{h,x,n}-\eta_s^{h,x,n}\|\leq
C_{r,T}|t-s|^{1-r}(1+\frac{1}{s^{1-r}})\|h\|.
\end{eqnarray*}
\begin{proof}
See Proposition B.5 in \cite{Brehier}.
\end{proof}
\end{lemma}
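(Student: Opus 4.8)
The plan is to work directly with the mild (integral) form of the first variation equation and to exploit the pathwise bound already furnished by Lemma \ref{eta-bound}, so that no Gronwall iteration on the increment itself is required. Fix $0<s\le t\le T$ and, using the integral identity $\eta_t^{h,x,n}=S_{t,n}h+\int_0^tS_{t-\tau,n}\bigl[\bar F_n'(\bar X_\tau^n(x))\cdot\eta_\tau^{h,x,n}\bigr]d\tau$ recalled in the proof of Lemma \ref{eta-bound}, I would write
\begin{align*}
\eta_t^{h,x,n}-\eta_s^{h,x,n}&=(S_{t,n}-S_{s,n})h+\int_s^tS_{t-\tau,n}\bigl[\bar F_n'(\bar X_\tau^n(x))\cdot\eta_\tau^{h,x,n}\bigr]d\tau\\
&\quad+\int_0^s(S_{t-\tau,n}-S_{s-\tau,n})\bigl[\bar F_n'(\bar X_\tau^n(x))\cdot\eta_\tau^{h,x,n}\bigr]d\tau.
\end{align*}
Setting $g_\tau:=\bar F_n'(\bar X_\tau^n(x))\cdot\eta_\tau^{h,x,n}$, the bound \eqref{bar-F-derivative} (which, like the semigroup estimates, transfers to the Galerkin system with constants independent of $n$, cf. the remarks in Section \ref{asym}) together with Lemma \ref{eta-bound} gives the pathwise estimate $\|g_\tau\|\le C_T\|h\|$ for all $\tau\in[0,T]$. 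The whole argument is therefore pathwise; no expectations enter.

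I would then estimate the three terms separately. For the first term, apply \eqref{propo-2} with $\gamma=1-r$ to obtain $\|(S_{t,n}-S_{s,n})h\|\le C_{1-r}\,|t-s|^{1-r}s^{-(1-r)}\|h\|$; since for $x\in H^{(n)}$ one has $S_{t,n}x=S_tx$, the constant here is literally that of Proposition \ref{proposition}. For the second term, contractivity of $S_{\cdot,n}$ and the bound on $g_\tau$ yield $\bigl\|\int_s^tS_{t-\tau,n}g_\tau\,d\tau\bigr\|\le C_T\|h\|\,|t-s|\le C_T T^r\|h\|\,|t-s|^{1-r}$, using $|t-s|\le T$. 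For the third term, \eqref{propo-2} with $\gamma=1-r$ gives $\|(S_{t-\tau,n}-S_{s-\tau,n})g_\tau\|\le C_{1-r}\,|t-s|^{1-r}(s-\tau)^{-(1-r)}e^{-\frac{\alpha_1}{2}(s-\tau)}\|g_\tau\|$; integrating over $\tau\in[0,s]$ and pulling the factor $|t-s|^{1-r}$ outside leaves the convergent integral $\int_0^{\infty}\sigma^{-(1-r)}e^{-\frac{\alpha_1}{2}\sigma}d\sigma<\infty$ (finite because $1-r<1$), so this term is bounded by $C_{r,T}\|h\|\,|t-s|^{1-r}$. Summing the three contributions gives exactly $\|\eta_t^{h,x,n}-\eta_s^{h,x,n}\|\le C_{r,T}|t-s|^{1-r}\bigl(1+s^{-(1-r)}\bigr)\|h\|$, with $C_{r,T}$ independent of $n$.

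I do not expect a serious obstacle. The two points requiring a little care are: (i) the exponent bookkeeping in the singular integral of the third term — one must keep $|t-s|^{1-r}$ outside and be left with a $\tau$-integrable singularity of order $1-r<1$, which is precisely why the estimate is stated with the loss parameter $r$; and (ii) checking that every constant inherited from Proposition \ref{proposition} and from \eqref{bar-F-derivative}, as well as the a priori bound of Lemma \ref{eta-bound}, is uniform in the Galerkin dimension $n$, which holds because the projected semigroups and coefficients retain the relevant bounds with unchanged constants. Everything else is the decomposition above plus the standard smoothing estimates, so the proof is short once the pathwise bound on $\eta^{h,x,n}$ is in hand.
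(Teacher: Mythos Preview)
Your proposal is correct and follows precisely the standard route one expects (and which the paper's cited reference, Proposition~B.5 in \cite{Brehier}, takes): decompose the increment via the mild formulation into the semigroup increment on $h$, the fresh integral over $[s,t]$, and the semigroup increment inside the integral over $[0,s]$, then apply \eqref{propo-2} with $\gamma=1-r$ together with the pathwise bound from Lemma~\ref{eta-bound}. Since the paper itself gives no argument beyond the citation, there is nothing further to compare.
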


\begin{lemma}\label{eta-regularity}
Under the conditions of Lemma \ref{eta-bound}, for any $T>0$  there
exists a constant $C_{T}>0$ such that for any $x, h\in H^{(n)}$,
\begin{eqnarray*}
\|A_n\eta^{h,x,n}_t\|\leq C_T (1+\frac{1}{t})(1+\|x\|)\|h\|,\;t\in
[0, T].
\end{eqnarray*}
\begin{proof}
See Proposition B.6 in \cite{Brehier}.
\end{proof}
\end{lemma}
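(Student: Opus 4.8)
The plan is to differentiate the mild formulation of the first variation equation. Applying $A_n$ to
\[
\eta^{h,x,n}_t=S_{t,n}h+\int_0^t S_{t-s,n}\bigl[\bar F'_n(\bar X^n_s(x))\cdot\eta^{h,x,n}_s\bigr]\,ds
\]
gives $A_n\eta^{h,x,n}_t=A_nS_{t,n}h+\int_0^t A_nS_{t-s,n}g_s\,ds$, where $g_s:=\bar F'_n(\bar X^n_s(x))\cdot\eta^{h,x,n}_s$ satisfies $\|g_s\|\le C\|\eta^{h,x,n}_s\|\le C_T\|h\|$ by \eqref{bar-F-derivative} and Lemma \ref{eta-bound}. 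For the homogeneous term, the smoothing estimate \eqref{propo-1} with $\gamma=1$ yields at once $\|A_nS_{t,n}h\|\le C t^{-1}e^{-\frac{\alpha_1}{2}t}\|h\|$, which produces the factor $1/t$; this is uniform in $n$ since $S_{t,n}$ is the restriction of $S_t$ to $H^{(n)}$. All the difficulty is in the convolution term, because $\|A_nS_{t-s,n}\|_{\mathcal L(H)}\le C(t-s)^{-1}e^{-\frac{\alpha_1}{2}(t-s)}$ is not integrable near $s=t$, so one cannot simply pull $A_n$ under the integral and use $\|g_s\|\le C_T\|h\|$; splitting the semigroup does not help either, since $A_nS_{t-s,n}$ always costs a full power $(t-s)^{-1}$.

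To recover integrability I would use the ``add and subtract'' device together with the identity $\int_0^t A_nS_{t-s,n}\,ds=S_{t,n}-I$, writing
\[
\int_0^t A_nS_{t-s,n}g_s\,ds=(S_{t,n}-I)g_t+\int_0^t A_nS_{t-s,n}(g_s-g_t)\,ds,
\]
so that the frozen part is bounded by $2\|g_t\|\le C_T\|h\|$ (using contractivity of $S_{t,n}$). For the remaining integral the key is a H\"older-in-time estimate for $s\mapsto g_s$. Decomposing
\begin{align*}
g_s-g_t&=\bar F'_n(\bar X^n_s(x))\cdot(\eta^{h,x,n}_s-\eta^{h,x,n}_t)\\
&\quad+\bigl[\bar F'_n(\bar X^n_s(x))-\bar F'_n(\bar X^n_t(x))\bigr]\cdot\eta^{h,x,n}_t,
\end{align*}
the first summand is $\le C\|\eta^{h,x,n}_s-\eta^{h,x,n}_t\|\le C_{r,T}|t-s|^{1-r}(1+s^{-(1-r)})\|h\|$ by Lemma \ref{eta-continuous}, while the second is $\le C\|\bar X^n_s(x)-\bar X^n_t(x)\|\,\|\eta^{h,x,n}_t\|$ since (H.1) makes $\bar F'_n$ Lipschitz with a dimension-free constant; then Lemma \ref{eta-bound} and the time-increment bound of the averaged process (Lemma \ref{bar-Xt-Xs}) control this by $C_{r,T}\bigl(\tfrac{|t-s|^{1-r}}{s^{1-r}}\|x\|+(|t-s|^{1/2}+|t-s|^{1-r}+|t-s|^{r})(1+\|x\|)\bigr)\|h\|$. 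Inserting all this into $\int_0^t\|A_nS_{t-s,n}\|_{\mathcal L(H)}\|g_s-g_t\|\,ds$ leaves Beta-type integrals such as $\int_0^t(t-s)^{-r}s^{-(1-r)}\,ds=B(1-r,r)$ and $\int_0^t(t-s)^{-r}\,ds$, all finite and bounded on $[0,T]$; hence this contribution is $\le C_{r,T}(1+\|x\|)\|h\|$ and carries no singularity. Collecting the three pieces gives $\|A_n\eta^{h,x,n}_t\|\le C_T\bigl(t^{-1}+1+\|x\|\bigr)\|h\|\le C_T(1+\tfrac1t)(1+\|x\|)\|h\|$, and every constant involved (the semigroup smoothing constants, the bound on $\bar F'_n$ and on its Lipschitz constant, the constants of Lemmas \ref{eta-bound}, \ref{eta-continuous}, \ref{bar-Xt-Xs}) is independent of $n$, giving the claimed uniformity.

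The hard part is the middle step, namely the H\"older modulus of $g$: one needs the increment $\bar F'_n(\bar X^n_s(x))-\bar F'_n(\bar X^n_t(x))$ to be small, which brings the time-continuity of the \emph{averaged} process into play, and that continuity is only available in the mean-square form of Lemma \ref{bar-Xt-Xs}. Thus, strictly speaking, the estimate is most naturally established in the shape $(\mathbb E\|A_n\eta^{h,x,n}_t\|^2)^{1/2}\le C_T(1+\tfrac1t)(1+\|x\|)\|h\|$, which is exactly what is used in the proof of Lemma \ref{4.3} through $\mathbb E\|A_n\eta^{h,x,n}_t\|\le(\mathbb E\|A_n\eta^{h,x,n}_t\|^2)^{1/2}$. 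A genuinely pathwise bound can be recovered by replacing Lemma \ref{bar-Xt-Xs} with the almost-sure continuity of $\bar X^n_\cdot$, at the price of a random multiplicative constant; when the slow equation is deterministic, as in \cite{Brehier}, this subtlety disappears and one recovers Proposition B.6 of \cite{Brehier}.
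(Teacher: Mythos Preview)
Your argument follows the standard route and is essentially what Proposition~B.6 of \cite{Brehier} does (which is all the paper invokes here): the smoothing estimate \eqref{propo-1} for the homogeneous piece, and the add--subtract device
\[
\int_0^t A_nS_{t-s,n}g_s\,ds=(S_{t,n}-I)g_t+\int_0^t A_nS_{t-s,n}(g_s-g_t)\,ds
\]
together with a H\"older modulus for $s\mapsto g_s$ for the convolution. The decomposition of $g_s-g_t$ you propose and the resulting Beta-type integrals are exactly the right mechanism, and all constants involved (from \eqref{propo-1}, \eqref{bar-F-derivative}, Lemmas~\ref{eta-bound}, \ref{eta-continuous}, \ref{bar-Xt-Xs}) are indeed uniform in $n$.

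The point you make at the end is well taken and is in fact a subtlety the paper glosses over by simply citing \cite{Brehier}. In Br\'ehier's setting the slow equation carries no noise, so the averaged dynamics is deterministic, $\bar X^n_t(x)$ and $\eta^{h,x,n}_t$ are deterministic, and Proposition~B.6 there yields a genuine pathwise bound. In the present paper the averaged equation \eqref{aver-finite-equation} is stochastic; the only time-increment control available for $\bar X^n_\cdot(x)$ is the $L^2$ estimate of Lemma~\ref{bar-Xt-Xs}, so your argument naturally produces
\[
\bigl(\mathbb E\|A_n\eta^{h,x,n}_t\|^2\bigr)^{1/2}\le C_T\Bigl(1+\frac1t\Bigr)(1+\|x\|)\|h\|
\]
rather than a pathwise inequality. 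As you note, this is exactly what is needed in the proof of Lemma~\ref{4.3} (only $\mathbb E\|A_n\eta^{h,x,n}_t\|$ enters there), so the application is unaffected; but strictly speaking the statement of the lemma should carry an expectation, or else the citation to \cite{Brehier} should be supplemented by the adaptation you outline.
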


After we have study the first order derivative of $\bar{X}_t^n(x)$,
we introduce the second order derivative of $\bar{X}_t^n(x)$ with
respect to $x$ in directions $h, k\in H^{(n)}$ denoted by
$\zeta^{h,k,x,n}$, which is the  solution of the second variation
equation
\begin{eqnarray}
\begin{cases}
d\zeta^{h,k,x,n}_t=\Big[{A}_n\zeta^{h,k,x,n}_t+
\bar{{F}}''_n(\bar{X}_t^n(x))\cdot(\eta^{h,x,n}_t,\eta^{k,x,n}_t)\\
\qquad\qquad\quad+\bar{{F}}'_n(\bar{X}_t^n(x))\cdot\zeta^{h,k,x,n}_t\Big]dt,\\
\zeta^{h,k,x}_0=0.
\end{cases}\label{variation-2}
\end{eqnarray}
\begin{lemma}\label{xi-2-bound}
Under the conditions of Lemma \ref{eta-bound}, for any $T>0$  there
exists a constant $C_{T}>0$ such that for any $x, h, k\in H^{(n)}$,
\begin{equation*}
\|\zeta^{h,k,x,n}_t\|\leq C_T\|h\|\cdot \|k\|, \; t\in [0, T].
\end{equation*}
\begin{proof}
See Proposition B.7 in \cite{Brehier}.
\end{proof}

\end{lemma}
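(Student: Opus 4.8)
The plan is to pass to the mild formulation of the second variation equation \eqref{variation-2}. Since $A_n$ generates the contraction semigroup $\{S_{t,n}\}_{t\ge0}$ on $H^{(n)}$, the process $\zeta^{h,k,x,n}_t$ is given by
\begin{equation*}
\zeta^{h,k,x,n}_t=\int_0^t S_{t-s,n}\Big[\bar{F}''_n(\bar{X}^n_s(x))\cdot(\eta^{h,x,n}_s,\eta^{k,x,n}_s)+\bar{F}'_n(\bar{X}^n_s(x))\cdot\zeta^{h,k,x,n}_s\Big]\,ds,
\end{equation*}
and the argument rests on three ingredients: the bound $\|S_{t-s,n}\|_{\mathcal{L}(H)}\le 1$; the fact that $\bar F_n$ has bounded first and second Fr\'echet derivatives uniformly in $n$, namely $\|\bar F_n'(x)\cdot h\|\le C\|h\|$ (this is \eqref{bar-F-derivative}) and $\|\bar F_n''(x)\cdot(h,k)\|\le C\|h\|\,\|k\|$; and Lemma~\ref{eta-bound}, which gives $\|\eta^{h,x,n}_s\|\le C_T\|h\|$ and $\|\eta^{k,x,n}_s\|\le C_T\|k\|$ on $[0,T]$.

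Taking norms in the mild formula and inserting these estimates yields
\begin{equation*}
\|\zeta^{h,k,x,n}_t\|\le C\int_0^t\|\eta^{h,x,n}_s\|\,\|\eta^{k,x,n}_s\|\,ds+C\int_0^t\|\zeta^{h,k,x,n}_s\|\,ds\le C_T\|h\|\,\|k\|\,t+C\int_0^t\|\zeta^{h,k,x,n}_s\|\,ds .
\end{equation*}
Since $t\mapsto C_T\|h\|\,\|k\|\,t$ is nondecreasing, Gronwall's lemma gives $\|\zeta^{h,k,x,n}_t\|\le C_T\,t\,e^{Ct}\|h\|\,\|k\|\le C_T\|h\|\,\|k\|$ for $t\in[0,T]$, with a constant depending only on $T$ and on the uniform bounds on the derivatives of $F$. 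In particular it is independent of the dimension $n$, because---as recorded in the remarks of Section~\ref{asym}---every constant attached to $F_n$, $\bar F_n$ and $\eta^{h,x,n}$ is uniform in $n$.

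The computation above is routine once the pieces are in place, so the only point deserving care is the second ingredient: the existence and uniform boundedness of $\bar F_n''$. This is not completely immediate, since $\bar F_n(x)=\int_{H^{(n)}}F_n(x,v)\,\mu^{x,n}(dv)$ involves the invariant measure $\mu^{x,n}$, which itself depends on $x$. The clean route is to differentiate the representation $\bar F_n(x)=\lim_{t\to\infty}\mathbb{E}F_n(x,Y^{x,n}_t(y))$ under the limit: the first and second derivatives of $Y^{x,n}_t(y)$ in $x$ solve linear equations driven by the generator of the fast process and therefore decay like $e^{-\beta t}$ with $\beta=\alpha_1-L_g>0$ (thanks to (H.2)), so the differentiated integrands converge and their limits are bounded by a constant times $\|h\|$, respectively $\|h\|\,\|k\|$; this is the same mechanism already exploited for \eqref{bar-F-derivative}. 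With this in hand the Gronwall estimate above completes the proof.
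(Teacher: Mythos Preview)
Your argument is correct and is precisely the standard Gronwall estimate on the mild form that Proposition~B.7 in \cite{Brehier} carries out; the paper simply cites that reference rather than reproducing the computation. Your added remark on why $\bar F_n''$ is bounded (via the limit representation \eqref{bar-F-lim} and the exponential decay of the variation processes of $Y^{x,n}_t$) fills in a point the paper leaves implicit, and is handled in the same spirit in \cite{Brehier}.
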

We now introduce the regular results for $\bar{u}_n(t,x)$ defined
 in Section \ref{asym}.
\begin{lemma}\label{ux}
For any $T>0$, there exists a constant $C_T>0$ such that for any
$x\in {H}^{(n)}$ and $t\in [0, T]$, we have
$$\|D_{x}\bar{u}_n(t,x)\|\leq C_{T, \phi}.$$
\end{lemma}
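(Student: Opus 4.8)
The plan is to differentiate under the expectation and then reduce the bound to the a priori estimate on the first variation process. Recall $\bar{u}_n(t,x)=\mathbb{E}\phi(\bar{X}_t^n(x))$, and that $\eta^{h,x,n}_t$ denotes the derivative $D_x\bar{X}_t^n(x)\cdot h$ of the solution of \eqref{aver-finite-equation} with respect to the initial datum along $h\in H^{(n)}$. Differentiating with respect to $x$ and using the chain rule together with the Riesz identification — exactly as already carried out in \eqref{u(t,x)h-deriv} in the proof of Lemma \ref{4.3} — I would write, for every $h\in H^{(n)}$,
\begin{eqnarray*}
D_x\bar{u}_n(t,x)\cdot h=\mathbb{E}\Big(\phi'(\bar{X}_t^n(x)),\eta^{h,x,n}_t\Big)_H .
\end{eqnarray*}

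Next I would bound the right-hand side by Cauchy--Schwarz in $H$, use that $\phi\in C_b^3(H,\mathbb{R})$ has uniformly bounded first derivative, say $\sup_{z\in H}\|\phi'(z)\|\le C_\phi$, and then invoke the pathwise estimate of Lemma \ref{eta-bound}, i.e.\ $\|\eta^{h,x,n}_t\|\le C_T\|h\|$ for $t\in[0,T]$:
\begin{eqnarray*}
\big|D_x\bar{u}_n(t,x)\cdot h\big|\leq\mathbb{E}\big[\|\phi'(\bar{X}_t^n(x))\|\,\|\eta^{h,x,n}_t\|\big]\leq C_\phi\,C_T\,\|h\|,\qquad t\in[0,T].
\end{eqnarray*}
Taking the supremum over $h\in H^{(n)}$ with $\|h\|\le 1$ yields $\|D_x\bar{u}_n(t,x)\|\le C_{T,\phi}$, with a constant depending only on $T$ and $\phi$; in particular it is independent of the Galerkin dimension $n$, since both $C_\phi$ and the constant in Lemma \ref{eta-bound} are $n$-independent (cf.\ the remarks at the beginning of Section \ref{asym}).

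I do not expect a real obstacle: the only point requiring a word of justification is the differentiation-under-expectation formula for $D_x\bar{u}_n$, which in the finite-dimensional space $H^{(n)}$ is classical, following from the mean-square $C^1$-dependence of $\bar{X}_t^n(x)$ on $x$ (the content of the first variation equation and Lemma \ref{eta-bound}) together with the boundedness and continuity of $\phi'$, which permit the interchange of $D_x$ and $\mathbb{E}$. All of the quantitative content is already encapsulated in Lemma \ref{eta-bound}, so no new estimate is needed.
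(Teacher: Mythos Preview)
Your proposal is correct and follows essentially the same argument as the paper: write $D_x\bar{u}_n(t,x)\cdot h=\mathbb{E}\big(\phi'(\bar{X}_t^n(x)),\eta^{h,x,n}_t\big)_H$, bound $\|\phi'\|$ uniformly, and apply Lemma~\ref{eta-bound} to control $\|\eta^{h,x,n}_t\|$. Your additional remarks on differentiating under the expectation and on the $n$-independence of the constant are welcome clarifications but do not alter the approach.
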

\begin{proof}
Note that for any $t\in [0, T]$ and $h\in H^{(n)}$,
\begin{eqnarray*}
D_x\bar{u}_n(t,x)\cdot h
=\mathbb{E}\left(\phi'(\bar{X}_t^n(x)),\eta^{h,x,n}_t\right)_{H}.
\end{eqnarray*}
By Lemma \ref{eta-bound}, we have
\begin{eqnarray*}
|D_x\bar{u}_n(t,x)\cdot h|\leq C_T\sup\limits_{z\in H
}\|\phi'(z)\|\cdot\|h\|, \label{u-bar-deriv}
\end{eqnarray*}
so that
\begin{eqnarray*}
\|D_x\bar{u}_n(t,x)\| \leq C_{T, \phi}.\label{u-bar-deriv-1-1}
\end{eqnarray*}
\end{proof}

\begin{lemma}\label{uxx}
For any $T>0$, there exists a constant $C_{T, \phi}>0$ such that for
any $x,h,k\in H^{(n)}$ and $t\in [0, T]$, we have
$$\left|D^2_{xx}\bar{u}_n(t,x)\cdot(h,k)\right|\leq C_{T,\phi}\|h\|\cdot\|k\|.$$

\begin{proof}
For any $h, k \in H^{(n)}$, we have
\begin{eqnarray}
D^2_{xx}\bar{u}_n(t,x)\cdot(h,k)&=&\mathbb{E}\big[\phi''(\bar{X}_t^n(x))\cdot(\eta^{h,x,n}_t,\eta^{k,x,n}_t)\nonumber\\
&&+\phi'(\bar{X}_t^n(x))\cdot \zeta^{h,k,x,n}_t\big], \label{5.1}
\end{eqnarray}
where $\zeta^{h,k,x,n}$ is  governed by variation equation
\eqref{variation-2}. By invoking Lemma \ref{eta-bound} and Lemma
\ref{xi-2-bound}, we can get
\begin{eqnarray*}
|D^2_{xx}\bar{u}_n(t,x)\cdot(h,k)|\leq C_{T, \phi} \|h\|\cdot\|k\|.
\end{eqnarray*}
\end{proof}
\end{lemma}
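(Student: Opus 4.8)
The plan is to differentiate $\bar u_n(t,x)=\mathbb{E}\phi(\bar X^n_t(x))$ twice in $x$ under the expectation and then reduce everything to the a priori bounds on the first and second variation processes of the averaged flow recorded in Lemmas \ref{eta-bound} and \ref{xi-2-bound}. First I would note that, since $\bar F_n$ is of class $\mathcal C^3$ with bounded derivatives (uniformly in $n$, inherited from (H.1)) and the computation takes place in the fixed finite-dimensional space $H^{(n)}$, the map $x\mapsto\bar X^n_t(x)$ is twice Fr\'echet differentiable: its derivative in a direction $h$ is the process $\eta^{h,x,n}_t$ solving the first variation equation, and its mixed second derivative in directions $h,k$ is the process $\zeta^{h,k,x,n}_t$ solving \eqref{variation-2}. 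Using $\phi\in C_b^3(H,\mathbb{R})$ and the chain rule to differentiate twice through $\mathbb{E}[\,\cdot\,]$, I would obtain
$$D^2_{xx}\bar u_n(t,x)\cdot(h,k)=\mathbb{E}\big[\phi''(\bar X^n_t(x))\cdot(\eta^{h,x,n}_t,\eta^{k,x,n}_t)+\phi'(\bar X^n_t(x))\cdot\zeta^{h,k,x,n}_t\big],$$
where the first summand arises from differentiating $\phi'(\bar X^n_t(x))$ along $k$ and the second from differentiating $\eta^{h,x,n}_t$ along $k$.

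Given this representation, the estimate is immediate. Since $\phi$ has bounded first and second derivatives, I would bound the two summands pointwise by $\big(\sup_{z\in H}\|\phi''(z)\|\big)\|\eta^{h,x,n}_t\|\,\|\eta^{k,x,n}_t\|$ and $\big(\sup_{z\in H}\|\phi'(z)\|\big)\|\zeta^{h,k,x,n}_t\|$ respectively, and then invoke Lemma \ref{eta-bound} to get $\|\eta^{h,x,n}_t\|\le C_T\|h\|$ and $\|\eta^{k,x,n}_t\|\le C_T\|k\|$, and Lemma \ref{xi-2-bound} to get $\|\zeta^{h,k,x,n}_t\|\le C_T\|h\|\,\|k\|$, uniformly for $t\in[0,T]$ and $n\in\mathbb{N}$. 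Taking expectations and adding the two contributions yields $|D^2_{xx}\bar u_n(t,x)\cdot(h,k)|\le C_{T,\phi}\|h\|\,\|k\|$, the constant being independent of $n$ because the constants in Lemmas \ref{eta-bound} and \ref{xi-2-bound} are.

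The only point requiring care — and the main (though mild) obstacle — is justifying the double differentiation under the expectation and the identification of the derivatives of $x\mapsto\bar X^n_t(x)$ with the solutions of the first and second variation equations. This rests on the $\mathcal C^3$-regularity and uniform boundedness of $\bar F'_n$ and $\bar F''_n$, on continuity-in-$x$ estimates for $\eta$ and $\zeta$ of the type in Lemma \ref{eta-continuous}, and on the fact that in $H^{(n)}$ the semigroup $S_{t,n}$ is bounded, so no unbounded-operator blow-up enters; once the representation formula is in hand, what remains is a routine supremum-norm bound.
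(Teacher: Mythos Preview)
Your proposal is correct and follows essentially the same approach as the paper: derive the representation $D^2_{xx}\bar u_n(t,x)\cdot(h,k)=\mathbb{E}[\phi''(\bar X^n_t(x))\cdot(\eta^{h,x,n}_t,\eta^{k,x,n}_t)+\phi'(\bar X^n_t(x))\cdot\zeta^{h,k,x,n}_t]$ and then apply Lemmas \ref{eta-bound} and \ref{xi-2-bound} together with the boundedness of $\phi',\phi''$. Your additional remarks on justifying differentiation under the expectation are sound but go slightly beyond what the paper spells out.
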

By proceeding again as in the proof of above lemma,  we have the
following result.
\begin{lemma}\label{uxxx}
For any $T>0$, there exists $C_T>0$ such that for any $x,h,k,l\in
H^{(n)}$ and $t\in [0, T]$, we have
$$D^3_{xxx}\bar{u}_n(t,x)\cdot(h,k,l)\leq C_{T,\phi}\|h\|\cdot\|k\|\cdot\|l\|.$$
\end{lemma}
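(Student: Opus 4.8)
The plan is to differentiate $\bar{u}_n(t,x)=\mathbb{E}\phi(\bar{X}_t^n(x))$ three times in $x$, proceeding exactly as in the proof of Lemma~\ref{uxx}, and then to control the resulting expression by combining the boundedness of the derivatives of $\phi$ (available since $\phi\in C_b^3(H,\mathbb{R})$) with uniform-in-$n$ bounds on the first, second, and third order variation processes of $\bar{X}_t^n(x)$. First I would introduce the third variation process $\xi^{h,k,l,x,n}_t$, obtained by differentiating the second variation equation \eqref{variation-2} once more in the direction $l\in H^{(n)}$; it solves
\begin{eqnarray*}
\begin{cases}
d\xi^{h,k,l,x,n}_t=\Big[A_n\xi^{h,k,l,x,n}_t
+\bar{F}_n'''(\bar{X}_t^n(x))\cdot(\eta^{h,x,n}_t,\eta^{k,x,n}_t,\eta^{l,x,n}_t)\\
\qquad\quad+\bar{F}_n''(\bar{X}_t^n(x))\cdot(\zeta^{h,l,x,n}_t,\eta^{k,x,n}_t)
+\bar{F}_n''(\bar{X}_t^n(x))\cdot(\eta^{h,x,n}_t,\zeta^{k,l,x,n}_t)\\
\qquad\quad+\bar{F}_n''(\bar{X}_t^n(x))\cdot(\zeta^{h,k,x,n}_t,\eta^{l,x,n}_t)
+\bar{F}_n'(\bar{X}_t^n(x))\cdot\xi^{h,k,l,x,n}_t\Big]dt,\\
\xi^{h,k,l,x,n}_0=0.
\end{cases}
\end{eqnarray*}
Because $\bar{F}_n$ inherits $C^3$-regularity with derivatives bounded uniformly in $n$ from $F$ (cf.\ \eqref{bar-F-derivative} and the analogous statements in \cite{Brehier}), and because $\|\eta^{h,x,n}_t\|\leq C_T\|h\|$ by Lemma~\ref{eta-bound} and $\|\zeta^{h,k,x,n}_t\|\leq C_T\|h\|\cdot\|k\|$ by Lemma~\ref{xi-2-bound}, the mild formulation of the equation above together with the contractivity of $S_{t,n}$ and Gronwall's lemma yields
\begin{eqnarray*}
\|\xi^{h,k,l,x,n}_t\|\leq C_T\|h\|\cdot\|k\|\cdot\|l\|,\qquad t\in[0,T],
\end{eqnarray*}
which is the third-order analogue of Proposition~B.7 in \cite{Brehier}.

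Next I would differentiate the identity \eqref{5.1} for $D^2_{xx}\bar{u}_n(t,x)\cdot(h,k)$ once more with respect to $x$ in the direction $l$. Using the chain rule and the definitions of $\eta$, $\zeta$ and $\xi$ --- and justifying the differentiation under the expectation by dominated convergence, exactly as in Lemma~\ref{uxx}, via the uniform bounds on the variation processes on $[0,T]$ --- one arrives at
\begin{eqnarray*}
D^3_{xxx}\bar{u}_n(t,x)\cdot(h,k,l)
&=&\mathbb{E}\big[\phi'''(\bar{X}_t^n(x))\cdot(\eta^{h,x,n}_t,\eta^{k,x,n}_t,\eta^{l,x,n}_t)\big]\\
&&+\mathbb{E}\big[\phi''(\bar{X}_t^n(x))\cdot(\zeta^{h,l,x,n}_t,\eta^{k,x,n}_t)\big]
+\mathbb{E}\big[\phi''(\bar{X}_t^n(x))\cdot(\eta^{h,x,n}_t,\zeta^{k,l,x,n}_t)\big]\\
&&+\mathbb{E}\big[\phi''(\bar{X}_t^n(x))\cdot(\zeta^{h,k,x,n}_t,\eta^{l,x,n}_t)\big]
+\mathbb{E}\big[\phi'(\bar{X}_t^n(x))\cdot\xi^{h,k,l,x,n}_t\big].
\end{eqnarray*}
Since $\phi\in C_b^3(H,\mathbb{R})$, the quantities $\sup_{z\in H}\|\phi'(z)\|$, $\sup_{z\in H}\|\phi''(z)\|$ and $\sup_{z\in H}\|\phi'''(z)\|$ are finite; combining this with Lemma~\ref{eta-bound}, Lemma~\ref{xi-2-bound} and the bound on $\xi^{h,k,l,x,n}_t$ just established, each of the five terms on the right-hand side is bounded by $C_{T,\phi}\|h\|\cdot\|k\|\cdot\|l\|$, and the claim follows.

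The only genuinely new ingredient compared with Lemma~\ref{uxx} is the Gronwall estimate for the third variation process; the minor obstacle there is to check that the inhomogeneous terms --- the products of lower-order variation processes weighted by $\bar{F}_n''$ and $\bar{F}_n'''$ --- are controlled uniformly in $n$, which is immediate once one knows that $\bar{F}_n$ is of class $C^3$ with all derivatives bounded independently of $n$.
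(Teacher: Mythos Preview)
Your proposal is correct and follows exactly the approach the paper indicates: the paper merely states that one proceeds ``again as in the proof of above lemma,'' and you have spelled out precisely those details by introducing the third variation process, bounding it via Gronwall, and then controlling each term in the chain-rule expansion of $D^3_{xxx}\bar{u}_n(t,x)\cdot(h,k,l)$ using the $C_b^3$ bounds on $\phi$ together with Lemmas~\ref{eta-bound} and~\ref{xi-2-bound}. The only point you rightly flag as needing verification---that $\bar{F}_n$ is $C^3$ with derivatives bounded uniformly in $n$---is standard from (H.1) via the representation \eqref{bar-F-lim} and is handled in \cite{Brehier}.
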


Finally, we introduce some regular results  which is crucial in
order to prove some important estimates in Section \ref{asym}.

\begin{lemma}\label{mix-derivative}
There exist constants $C, c>0$ such that for any $x, y, h\in
H^{(n)}$ and $t>0$ it holds
\begin{eqnarray*}
\|D_x (\bar{F}_n(x)-\mathbb{E}F_n(x, Y^x_t(y)))\cdot h\| \leq
Ce^{-ct}\|h\| \left(1+\|x\| +\|y\| \right).
\end{eqnarray*}
\begin{proof}
We shall follow the approach of \cite[Proposition C.2]{Brehier}. For
any $t_0>0$, we set
\begin{eqnarray*}
\tilde{F}_{t_0,n}(x,y,t)=\hat{F}_n(x,y,t)-\hat{F}_n(x,y,t+t_0),
\end{eqnarray*}
where
\begin{eqnarray*}
\hat{F}_n(x,y,t):=\mathbb{E}F_n(x, Y^{x,n}_t(y)).
\end{eqnarray*}
Thanks to Markov property we may write that
\begin{eqnarray*}
\tilde{F}_{t_0,n}(x,y,t)&=&\hat{F}_n(x,y,t)-\mathbb{E}F_n(x,Y_{t+t_0}^{x,n}(y))\\
&=&\hat{F}_n(x,y,t)-\mathbb{E}\hat{F}_n(x, Y_{t_0}^{x,n}(y),t).
\end{eqnarray*}
In view of the assumption (H.1), $\hat{F}_n$ is
G\^{a}teaux-differentiable with respect to $x$ at $(x,y,t)$.
Therefore, we have for any $h\in H^{(n)}$ that
\begin{eqnarray}
D_x\tilde{F}_{t_0,n}(x,y,t)\cdot h&=&D_x\hat{F}_n(x,y,t)\cdot
h-\mathbb{E}D_x\left(\hat{F}_n(x, Y_{t_0}^{x,n}(y),t)\right)\cdot h\nonumber\\
&=&\hat{F}_{n,x}'(x,y,t)\cdot h-\mathbb{E}\hat{F}_{n,x}'(x,
Y_{t_0}^{x,n}(y),t)\cdot
h\nonumber\\
&&-\mathbb{E}\hat{F}_{n,y}'(x,
Y_{t_0}^{x,n}(y),t)\cdot\left(D_xY_{t_0}^{x,n}(y)\cdot
h\right),\label{7-1-1}
\end{eqnarray}
where we use the symbol $\hat{F}_{n,x}'$ and $\hat{F}_{n,y}'$ to
denote the  derivative with respect to $x$ and $y$, respectively.
Note that the first derivative $\varsigma_t^{x,y,
h,n}=D_xY_{t}^{x,n}(y)\cdot h$, at the point $x$ and along the
direction $h\in H^{(n)}$, is the solution of variation equation
\begin{eqnarray*}
d\varsigma_t^{x, y, h,n}&=&\left(A_n\varsigma_t^{x,
y,h,n}+G_{n,x}'(x, Y_t^{x,n}(y))\cdot h+G_{n,y}'(x,
Y_t^{x,n}(y))\cdot\varsigma_t^{x, y,h,n}\right)dt
\end{eqnarray*}
with initial data $\varsigma_0^{x,y, h,n}=0$. Hence, thanks to
{(H.2)}, it is immediate to check that for any $t\geq 0$,
\begin{eqnarray}
\mathbb{E}\|\varsigma_t^{x,y, h,n}\|\leq C\|h\|.\label{7-2}
\end{eqnarray}
Note that there exists a constant $c>0$, such that, for any $y_1,
y_2\in H^{(n)}$, it holds
\begin{eqnarray}
\|\hat{F}_n(x,y_1, t)-\hat{F}_n(x,y_2,t)\|&=&\|\mathbb{E}F_n(x,
Y_t^{x,n}(y_1))-\mathbb{E}F_n(x,
Y_t^{x,n}(y_2))\|\nonumber\\
&\leq&C\mathbb{E}\|Y_t^{x,n}(y_1)-Y_t^{x}(y_2)\|\nonumber\\
&\leq& Ce^{-ct}\|y_1-y_2\|,\nonumber
\end{eqnarray}
which implies
\begin{eqnarray}
\|\hat{F}_{n,y}'(x, y,t)\cdot k\|\leq Ce^{-ct}\|k\|,\; k\in
H.\label{7-3}
\end{eqnarray}
Therefore, thanks to \eqref{7-2} and \eqref{7-3}, we can conclude
that
\begin{eqnarray}
\|\mathbb{E}[\hat{F}_{n,y}'(x,
Y_{t_0}^{x,n}(y),t)\cdot\left(D_xY_{t_0}^{x,n}(y)\cdot
h\right)]\|\leq C e^{-ct}\|h\|.\label{7-4}
\end{eqnarray}
Then, we directly have
\begin{eqnarray}
&&\hat{F}_{n,x}'(x,y_1,t)\cdot h-\hat{F}_{n,x}'(x,y_2,t)\cdot h\nonumber\\
&&\quad=\mathbb{E}\left(F_{n,x}'(x, Y_t^{x,n}(y_1))\right)\cdot
h-\mathbb{E}\left(F_{n,x}'(x, Y_t^{x,n}(y_2))\right)\cdot h\nonumber\\
&&\quad\quad+\mathbb{E}\left(F_{n,y}'(x, Y_t^{x,n}(y_1))\cdot
\varsigma_t^{x,y_1,
h,n}-F_{n,y}'(x, Y_t^{x,n}(y_2))\cdot \varsigma_t^{x,y_2, h,n}\right)\nonumber\\
&&\quad= \mathbb{E}\left(F_{n,x}'(x, Y_t^{x,n}(y_1))\right)\cdot
h-\mathbb{E}\left(F_{n,x}'(x, Y_t^{x,n}(y_2))\right)\cdot h\nonumber\\
&&\quad\quad+\mathbb{E}\left([F_{n,y}'(x,
Y_t^{x,n}(y_1))-F_{n,y}'(x,
Y_t^{x,n}(y_2))]\cdot\varsigma_t^{x,y_1, h,n} \right)\nonumber\\
&&\quad\quad+\mathbb{E}\left(F_{n,y}'(x, Y_t^{x,n}(
y_2))\cdot(\varsigma_t^{x,y_1, h,n}-\varsigma_t^{x,y_2,
h,n})\right).\label{7-5}
\end{eqnarray}
{First it is easy to show}
\begin{eqnarray}
&&\|\mathbb{E}\left(F_{n,x}'(x, Y_t^{x,n}(y_1))\right)\cdot
h-\mathbb{E}\left(F_{n,x}'(x, Y_t^{x,n}(y_2))\right)\cdot h\|\nonumber\\
&&\quad\leq\mathbb{E}\|\left(F_{n,x}'(x, Y_t^{x,n}(y_1))\right)\cdot
h-\left(F_{n,x}'(x, Y_t^{x,n}(y_2))\right)\cdot h\|\nonumber\\
&&\quad\leq
C\mathbb{E}\|Y_t^{x,n}(y_1)-Y_t^{x,n}(y_2)\|\cdot\|h\|\nonumber\\
&&\quad\leq Ce^{-ct}\|y_1-y_2\|\cdot\|h\|.\label{7-6}
\end{eqnarray}
Next, by  Assumption (H.2)  we have
\begin{eqnarray}
&&\|\mathbb{E}\left([F_{n,y}'(x, Y_t^{x,n}(y_1))-F_{n,y}'(x,
Y_t^{x,n}(y_2))]\cdot\varsigma_t^{x,y_1, h,n} \right)\|\nonumber\\
&&\quad\leq\mathbb{E}\|[F_{n,y}'(x, Y_t^{x,n}(y_1))-F_{n,y}'(x,
Y_t^{x,n}(y_2))]\cdot\varsigma_t^{x,y_1, h,n}\|\nonumber\\
&&\quad\leq C\{\mathbb{E}\|\varsigma_t^{x,y_1,
h,n}\|^2\}^{\frac{1}{2}}\cdot\{\mathbb{E}\|Y_t^{x,n}(y_1)-Y_t^{x,n}(y_2)\|^2\}^{\frac{1}{2}}\nonumber\\
&&\quad\leq C e^{-ct}\|h\|\cdot\|y_1-y_2\|.\label{7-7}
\end{eqnarray}
By making use of  {Assumption (H.1)} again, we can show that there
exists a constant $c'>0$  such that one has
\begin{eqnarray}
&&\|\mathbb{E}\left(F_{n,y}'(x, Y_t^{x,n}
(y_2))\cdot(\varsigma_t^{x,y_1,
h,n}-\varsigma_t^{x,y_2, h,n})\right)\|\nonumber\\
&&\quad\leq\mathbb{E}\|\left(F_{n,y}'(x, Y_t^{x,n}(
y_2))\cdot(\varsigma_t^{x,y_1, h,n}-\varsigma_t^{x,y_2, h,n})\right)\|\nonumber\\
&&\quad\leq C\mathbb{E}\|\varsigma_t^{x,y_1, h,n}-\varsigma_t^{x,y_2, h,n}\|\nonumber\\
&&\quad\leq C e^{-c't}\|y_1-y_2\|\cdot\|h\|.\label{7-8}
\end{eqnarray}
Collecting together \eqref{7-5}, \eqref{7-6}, \eqref{7-7} and
\eqref{7-8}, we get
\begin{eqnarray*}
&&\|\hat{F}_{n,x}'(x,y_1,t)\cdot h-\hat{F}_{n,x}'(x,y_2,t)\cdot h\|\nonumber\\
&&\leq C e^{-c_0t}\|y_1-y_2\|\cdot\|h\|,
\end{eqnarray*}
which means
\begin{eqnarray}
&&\|\hat{F}_{n,x}'(x,y,t)\cdot h-\mathbb{E}\hat{F}_{n,x}'(x,Y^{x,n}_{t_0}(y),t)\cdot h\|\nonumber\\
&&\leq C e^{-c_0t}(1+\|y\|)\cdot\|h\| \label{7-9}
\end{eqnarray}
since
\begin{eqnarray*}
\mathbb{E}\|Y^{x,n}_{t_0}(y)\|\leq C(1+\|x\|+\|y\|).
\end{eqnarray*}
Returning to \eqref{7-1-1}, by \eqref{7-4} and \eqref{7-9} we
conclude that
\begin{eqnarray*}
\|D_x\tilde{F}_{t_0,n}(x,y,t)\cdot h\|\leq
Ce^{-ct}(1+\|x\|+\|y\|)\|h\|.
\end{eqnarray*}
 By taking the limit as $t_0$ goes to infinity we obtain
 \begin{eqnarray*}
 \|D_x (\bar{F}_n(x)-\mathbb{E}F_n(x, Y^x_t(y)))\cdot h\| \leq
Ce^{-ct}\|h\| \left(1+\|x\| +\|y\| \right).
\end{eqnarray*}
\end{proof}
\end{lemma}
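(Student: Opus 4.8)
The plan is to follow the device of \cite[Proposition C.2]{Brehier} and \emph{never differentiate the invariant measure} $\mu^{x,n}$, whose dependence on $x$ is awkward. Write $\hat{F}_n(x,y,t):=\mathbb{E}F_n(x,Y^{x,n}_t(y))$. The finite-dimensional analogue of \eqref{bar-F-lim} gives $\bar{F}_n(x)=\lim_{s\to+\infty}\hat{F}_n(x,y,s)$, so $\bar{F}_n(x)-\hat{F}_n(x,y,t)=-\lim_{t_0\to+\infty}\tilde{F}_{t_0,n}(x,y,t)$ with $\tilde{F}_{t_0,n}(x,y,t):=\hat{F}_n(x,y,t)-\hat{F}_n(x,y,t+t_0)$, and by the Markov property $\hat{F}_n(x,y,t+t_0)=\mathbb{E}\hat{F}_n(x,Y^{x,n}_{t_0}(y),t)$. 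Thus it suffices to prove $\|D_x\tilde{F}_{t_0,n}(x,y,t)\cdot h\|\le Ce^{-ct}(1+\|x\|+\|y\|)\|h\|$ with $C,c$ independent of $t_0$ (and, by the Remark, of $n$), and then pass to the limit $t_0\to+\infty$.

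Differentiating in $x$ gives $D_x\tilde{F}_{t_0,n}(x,y,t)\cdot h=\hat{F}'_{n,x}(x,y,t)\cdot h-\mathbb{E}\bigl[\hat{F}'_{n,x}(x,Y^{x,n}_{t_0}(y),t)\cdot h\bigr]-\mathbb{E}\bigl[\hat{F}'_{n,y}(x,Y^{x,n}_{t_0}(y),t)\cdot(D_xY^{x,n}_{t_0}(y)\cdot h)\bigr]$. For the last term, the contraction \eqref{initial-diff} yields $\|\hat{F}_n(x,y_1,t)-\hat{F}_n(x,y_2,t)\|\le C\,\mathbb{E}\|Y^{x,n}_t(y_1)-Y^{x,n}_t(y_2)\|\le Ce^{-ct}\|y_1-y_2\|$, hence $\|\hat{F}'_{n,y}(x,y,t)\cdot k\|\le Ce^{-ct}\|k\|$; combined with the bound $\mathbb{E}\|D_xY^{x,n}_{t_0}(y)\cdot h\|\le C\|h\|$ for the first variation of \eqref{frozen-finite} (Gronwall, using that the coefficient $G'_{n,y}$ has operator norm $\le L_g<\alpha_1$), this term is $\le Ce^{-ct}\|h\|$. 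The first two terms measure the sensitivity of $\hat{F}'_{n,x}(x,\cdot,t)$ to the fast initial datum; using $\hat{F}'_{n,x}(x,y,t)\cdot h=\mathbb{E}[F'_{n,x}(x,Y^{x,n}_t(y))\cdot h]+\mathbb{E}[F'_{n,y}(x,Y^{x,n}_t(y))\cdot\varsigma^{x,y,h,n}_t]$, where $\varsigma^{x,y,h,n}_t:=D_xY^{x,n}_t(y)\cdot h$ solves the first variation equation of \eqref{frozen-finite}, I would split $\hat{F}'_{n,x}(x,y_1,t)\cdot h-\hat{F}'_{n,x}(x,y_2,t)\cdot h$ into: (i) the difference of $\mathbb{E}[F'_{n,x}(x,Y^{x,n}_t(\cdot))\cdot h]$, bounded by Lipschitz continuity of $F'_x$ (from (H.1)) and \eqref{initial-diff}; (ii) the term where only the argument of $F'_{n,y}$ changes, bounded by Cauchy--Schwarz with $\mathbb{E}\|\varsigma^{x,y,h,n}_t\|^2\le C\|h\|^2$ and $\mathbb{E}\|Y^{x,n}_t(y_1)-Y^{x,n}_t(y_2)\|^2\le Ce^{-\beta t}\|y_1-y_2\|^2$; and (iii) the term $\mathbb{E}[F'_{n,y}(x,Y^{x,n}_t(y_2))\cdot(\varsigma^{x,y_1,h,n}_t-\varsigma^{x,y_2,h,n}_t)]$, bounded by $C\,\mathbb{E}\|\varsigma^{x,y_1,h,n}_t-\varsigma^{x,y_2,h,n}_t\|$.

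The main obstacle is item (iii): proving $\mathbb{E}\|\varsigma^{x,y_1,h,n}_t-\varsigma^{x,y_2,h,n}_t\|\le Ce^{-c't}\|y_1-y_2\|\,\|h\|$. The difference $\delta_t:=\varsigma^{x,y_1,h,n}_t-\varsigma^{x,y_2,h,n}_t$ satisfies $d\delta_t=(A_n\delta_t+G'_{n,y}(x,Y^{x,n}_t(y_2))\cdot\delta_t+R_t)\,dt$, $\delta_0=0$, where the coefficient $G'_{n,y}(x,Y^{x,n}_t(y_2))$ has norm $\le L_g<\alpha_1$ and $R_t=[G'_{n,x}(x,Y^{x,n}_t(y_1))-G'_{n,x}(x,Y^{x,n}_t(y_2))]\cdot h+[G'_{n,y}(x,Y^{x,n}_t(y_1))-G'_{n,y}(x,Y^{x,n}_t(y_2))]\cdot\varsigma^{x,y_1,h,n}_t$; by Lipschitz continuity of $G'_x,G'_y$ (from (H.2)), \eqref{initial-diff}, and the moment bounds for $\varsigma$, one gets $\mathbb{E}\|R_t\|\le Ce^{-ct}\|y_1-y_2\|\,\|h\|$. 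Passing to the mild formulation, using $\|S_{t,n}\|_{\mathcal{L}(H^{(n)})}\le e^{-\alpha_1 t}$ and setting $g(t):=e^{\alpha_1 t}\mathbb{E}\|\delta_t\|$, one obtains $g(t)\le L_g\int_0^t g(s)\,ds+C\int_0^t e^{(\alpha_1-c)s}\,ds\,\|y_1-y_2\|\,\|h\|$; a Gronwall argument, using precisely that $\alpha_1-L_g=\beta>0$ (and shrinking $c$ so that $c<\beta$), gives $g(t)\le Ce^{(\alpha_1-c)t}\|y_1-y_2\|\,\|h\|$, i.e.\ the claimed decay of $\mathbb{E}\|\delta_t\|$.

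Collecting (i)--(iii) yields $\|\hat{F}'_{n,x}(x,y_1,t)\cdot h-\hat{F}'_{n,x}(x,y_2,t)\cdot h\|\le Ce^{-c_0 t}\|y_1-y_2\|\,\|h\|$, and then, since $\mathbb{E}\|Y^{x,n}_{t_0}(y)\|\le C(1+\|x\|+\|y\|)$ uniformly in $t_0$ by \eqref{Fast-motion-energy-bound}, the first two terms are $\le Ce^{-c_0 t}(1+\|x\|+\|y\|)\|h\|$. Hence $\|D_x\tilde{F}_{t_0,n}(x,y,t)\cdot h\|\le Ce^{-ct}(1+\|x\|+\|y\|)\|h\|$ uniformly in $t_0$; letting $t_0\to+\infty$ (the derivatives $D_x\tilde{F}_{t_0,n}(x,y,t)$ are uniformly bounded and, along the above estimates, converge to $D_x(\hat{F}_n(x,y,t)-\bar{F}_n(x))\cdot h$) transfers the bound to $D_x(\bar{F}_n(x)-\mathbb{E}F_n(x,Y^{x,n}_t(y)))\cdot h$, which is exactly the assertion; all constants are uniform in $n$ because the regularity of $F_n,G_n$ and all estimates on $Y^{x,n}_t$ are, by the Remark above.
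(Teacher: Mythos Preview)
Your proposal is correct and follows essentially the same approach as the paper: both introduce the auxiliary $\tilde{F}_{t_0,n}$ via the Markov property, decompose $D_x\tilde{F}_{t_0,n}\cdot h$ into the same three terms, split $\hat{F}'_{n,x}(x,y_1,t)\cdot h-\hat{F}'_{n,x}(x,y_2,t)\cdot h$ into the same pieces (i)--(iii), and pass to the limit $t_0\to\infty$. If anything, you supply more detail than the paper on the key step~(iii), where the paper merely asserts the exponential decay $\mathbb{E}\|\varsigma_t^{x,y_1,h,n}-\varsigma_t^{x,y_2,h,n}\|\le Ce^{-c't}\|y_1-y_2\|\,\|h\|$, whereas you sketch the Gronwall argument exploiting the spectral gap $\alpha_1-L_g>0$.
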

Proceeding with similar arguments above we can obtain similar result
concerning the second  order differentiability.
\begin{lemma}\label{mix-derivative-2}
There exist constants $C, c>0$ such that for any $x, y, h, k\in
H^{(n)}$ and $t>0$ it holds
\begin{eqnarray*}
\|D^2_{xx} (\bar{F}_n(x)-\mathbb{E}F_n(x, Y^{x,n}_t(y)))\cdot (h,
k)\| \leq Ce^{-ct}\|h\| \cdot \|k\|\left(1+\|x\| +\|y\| \right).
\end{eqnarray*}
\end{lemma}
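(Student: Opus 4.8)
The plan is to follow the proof of Lemma \ref{mix-derivative}, carried out one differentiation order higher, in the spirit of the appendix of \cite{Brehier}. For $t_0>0$ I would set $\hat{F}_n(x,y,t):=\mathbb{E}F_n(x,Y_t^{x,n}(y))$ and $\tilde{F}_{t_0,n}(x,y,t):=\hat{F}_n(x,y,t)-\hat{F}_n(x,y,t+t_0)$, so that, by the Markov property, $\tilde{F}_{t_0,n}(x,y,t)=\hat{F}_n(x,y,t)-\mathbb{E}\hat{F}_n(x,Y_{t_0}^{x,n}(y),t)$. The goal is to show that $\|D^2_{xx}\tilde{F}_{t_0,n}(x,y,t)\cdot(h,k)\|\leq Ce^{-ct}(1+\|x\|+\|y\|)\|h\|\cdot\|k\|$ uniformly in $t_0>0$, and then to let $t_0\to+\infty$, invoking \eqref{bar-F-lim} together with the convergence $D^2_{xx}\hat{F}_n(x,y,t+t_0)\to D^2_{xx}\bar{F}_n(x)$, which follows by dominated convergence from the uniform-in-time variation estimates below.

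The ingredient not present in Lemma \ref{mix-derivative} is the second variation process $\vartheta_t^{x,y,h,k,n}:=D^2_{xx}Y_t^{x,n}(y)\cdot(h,k)$, obtained by differentiating once more the linear equation satisfied by $\varsigma_t^{x,y,h,n}$; it solves
\begin{eqnarray*}
d\vartheta_t^{x,y,h,k,n}=\big[A_n\vartheta_t^{x,y,h,k,n}+\Phi_t^{x,y,h,k,n}+G_{n,y}'(x,Y_t^{x,n}(y))\cdot\vartheta_t^{x,y,h,k,n}\big]\,dt,\quad \vartheta_0^{x,y,h,k,n}=0,
\end{eqnarray*}
where the inhomogeneity $\Phi_t^{x,y,h,k,n}$ collects $G_{n,xx}''(x,Y_t^{x,n}(y))\cdot(h,k)$, the mixed terms $G_{n,xy}''(x,Y_t^{x,n}(y))\cdot(h,\varsigma_t^{x,y,k,n})$ and $G_{n,yx}''(x,Y_t^{x,n}(y))\cdot(k,\varsigma_t^{x,y,h,n})$, and $G_{n,yy}''(x,Y_t^{x,n}(y))\cdot(\varsigma_t^{x,y,h,n},\varsigma_t^{x,y,k,n})$. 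By (H.2) together with \eqref{7-2} and its $L^2$-analogue one gets $\sup_{t\geq0}\mathbb{E}\|\Phi_t^{x,y,h,k,n}\|\leq C\|h\|\cdot\|k\|$; then, passing to the mild form and using the contraction $\|S_{t,n}\|_{\mathcal{L}(H^{(n)})}\leq e^{-\alpha_1t}$, the bound $\|G_{n,y}'\|_{\mathcal{L}(H)}\leq L_g<\alpha_1$, and a Gronwall estimate against an exponentially decaying kernel, one obtains $\sup_{t\geq0}\mathbb{E}\|\vartheta_t^{x,y,h,k,n}\|\leq C\|h\|\cdot\|k\|$, the analogue of \eqref{7-2}. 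Running the same scheme on the difference of two initial data, with the coefficient differences controlled through \eqref{initial-diff}, gives $\mathbb{E}\|\vartheta_t^{x,y_1,h,k,n}-\vartheta_t^{x,y_2,h,k,n}\|\leq Ce^{-ct}\|y_1-y_2\|\cdot\|h\|\cdot\|k\|$. One also records that every derivative of $\hat{F}_n$ in its middle argument decays exponentially, i.e. $\|\hat{F}_{n,y}'(x,y,t)\cdot k\|\leq Ce^{-ct}\|k\|$ and $\|\hat{F}_{n,yy}''(x,y,t)\cdot(h,k)\|\leq Ce^{-ct}\|h\|\cdot\|k\|$, by the same mechanism that gives \eqref{7-3} (namely \eqref{initial-diff} and the exponential decay of the variation processes of the fast flow in its frozen slow variable, again a consequence of $L_g<\alpha_1$).

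Differentiating $\tilde{F}_{t_0,n}$ twice in $x$ by the chain rule then produces the term $\hat{F}_{n,xx}''(x,y,t)\cdot(h,k)-\mathbb{E}[\hat{F}_{n,xx}''(x,Y_{t_0}^{x,n}(y),t)\cdot(h,k)]$ together with several terms carrying at least one derivative of $\hat{F}_n$ in its middle argument, built from $\hat{F}_{n,xy}''$, $\hat{F}_{n,yx}''$, $\hat{F}_{n,yy}''$ and $\hat{F}_{n,y}'$ paired with $\varsigma_{t_0}^{x,y,h,n}$, $\varsigma_{t_0}^{x,y,k,n}$ or with $\vartheta_{t_0}^{x,y,h,k,n}$. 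For this second group, exactly as in the passage from \eqref{7-3} to \eqref{7-4}, the exponential decay in the middle argument, combined with $\mathbb{E}\|\varsigma_{t_0}^{x,y,h,n}\|\leq C\|h\|$, $\mathbb{E}\|\varsigma_{t_0}^{x,y,k,n}\|\leq C\|k\|$ and the new bound $\mathbb{E}\|\vartheta_{t_0}^{x,y,h,k,n}\|\leq C\|h\|\cdot\|k\|$, makes each such term $\leq Ce^{-ct}\|h\|\cdot\|k\|$. For the first term one uses the exponentially decaying Lipschitz estimate
\begin{eqnarray*}
\|\hat{F}_{n,xx}''(x,y_1,t)\cdot(h,k)-\hat{F}_{n,xx}''(x,y_2,t)\cdot(h,k)\|\leq Ce^{-ct}\|y_1-y_2\|\cdot\|h\|\cdot\|k\|,
\end{eqnarray*}
proved by repeating the expansion \eqref{7-5}--\eqref{7-9} one order up: write $\hat{F}_{n,xx}''$ through the chain rule in terms of $F_{n,xx}''$, $F_{n,xy}''$, $F_{n,yy}''$, $F_{n,y}'$ evaluated along $Y_t^{x,n}(y_i)$ and the processes $\varsigma$, $\vartheta$, then bound each resulting difference using \eqref{initial-diff}, \eqref{7-2}, the $\vartheta$-estimates above, the continuous dependence of $\varsigma$ on its initial datum, and the Lipschitz continuity of the derivatives of $F_n$ up to order three granted by (H.1). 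Taking $y_1=y$, $y_2=Y_{t_0}^{x,n}(y)$ and using $\mathbb{E}\|Y_{t_0}^{x,n}(y)-y\|\leq C(1+\|x\|+\|y\|)$ from Lemma \ref{moment-bound}, the first term is $\leq Ce^{-ct}(1+\|x\|+\|y\|)\|h\|\cdot\|k\|$. Summing all contributions gives the uniform-in-$t_0$ bound on $\|D^2_{xx}\tilde{F}_{t_0,n}(x,y,t)\cdot(h,k)\|$, and letting $t_0\to+\infty$ concludes the proof.

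I expect the main obstacle to be the uniform-in-time control of the second variation $\vartheta_t^{x,y,h,k,n}$ and of its dependence on the initial datum: unlike the first variation, whose inhomogeneity is bounded pathwise, the inhomogeneity $\Phi_t^{x,y,h,k,n}$ of the $\vartheta$-equation is only bounded in mean (uniformly in $t$), so the estimate cannot be read off from a fixed-point contraction and genuinely relies on the strict dissipativity $L_g<\alpha_1$ in (H.2) to close a Gronwall estimate against an exponentially decaying kernel. Everything else is a bookkeeping-heavy but routine transcription of the argument for Lemma \ref{mix-derivative}.
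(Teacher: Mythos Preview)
Your proposal is exactly what the paper intends: its entire proof of this lemma is the single sentence ``Proceeding with similar arguments above we can obtain similar result concerning the second order differentiability,'' and you have carried out that program in full, introducing the second variation $\vartheta_t^{x,y,h,k,n}$ of the frozen fast flow and rerunning the scheme of Lemma~\ref{mix-derivative} one order higher in the spirit of \cite{Brehier}. The only point worth flagging is that your exponentially decaying Lipschitz-in-$y$ estimate for $\vartheta^{y_1}-\vartheta^{y_2}$ tacitly uses that the second derivatives of $G$ are Lipschitz in the fast variable (to control $\Phi^{y_1}-\Phi^{y_2}$), which strictly requires $G\in\mathcal{C}^3$ rather than the $\mathcal{C}^2$ of (H.2); this regularity mismatch is already implicit in the paper's one-line proof and is not a defect of your strategy.
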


\section*{Acknowledgments}
We would like to thank Professor Dirk Bl\"{o}mker for helpful
discussions and comments. Hongbo Fu is supported by CSC scholarship
(No. [2015]5104), NSF  of China (Nos. 11301403) and Foundation of
Wuhan Textile University 2013. Li Wan is supported by NSF  of China
(No. 61573011) and Science and Technology Research Projects of Hubei
Provincial Department of Education (No. D20131602). Jicheng Liu is
supported by NSF  of China (No. 11271013). Xianming Liu is supported
by NSF  of China (No. 11301197)

\label{}

%% The Appendices part is started with the command \appendix;
%% appendix sections are then done as normal sections
%% \appendix

%% \section{}
%% \label{}

%% References
%%
%% Following citation commands can be used in the body text:
%% Usage of \cite is as follows:
%%   \cite{key}         ==>>  [#]
%%   \cite[chap. 2]{key} ==>> [#, chap. 2]
%%

%% References with bibTeX database:

%% Authors are advised to submit their bibtex database files. They are
%% requested to list a bibtex style file in the manuscript if they do
%% not want to use elsarticle-num.bst.

%% References without bibTeX database:

% \begin{thebibliography}{00}

%% \bibitem must have the following form:
%%   \bibitem{key}...
%%

% \bibitem{}

% \end{thebibliography}
\end{document}